\def\RSsubtxt{section~}\newref{sub}{name = \RSsubtxt}}
\def\RSthmtxt{theorem~}\newref{thm}{name = \RSthmtxt}}
\def\RSlemtxt{lemma~}\newref{lem}{name = \RSlemtxt}}
\numberwithin{equation}{section}
\numberwithin{figure}{section}
\theoremstyle{plain}
\newtheorem{thm}{\protect\theoremname}[section]
\theoremstyle{plain}
\newtheorem{cor}[thm]{\protect\corollaryname}
\theoremstyle{definition}
\newtheorem{defn}[thm]{\protect\definitionname}
\theoremstyle{remark}
\newtheorem{rem}[thm]{\protect\remarkname}
\newenvironment{proof}[1][\protect\proofname]{\par
\normalfont\topsep6\p@\@plus6\p@\relax
\trivlist
\itemindent\parindent
\item[\hskip\labelsep\scshape #1]\ignorespaces
}{%
\endtrivlist\@endpefalse
}
\providecommand{\proofname}{Proof}
\theoremstyle{plain}
\newtheorem{prop}[thm]{\protect\propositionname}
\theoremstyle{plain}
\newtheorem{lem}[thm]{\protect\lemmaname}
\providecommand{\corollaryname}{Corollary}
\providecommand{\definitionname}{Definition}
\providecommand{\lemmaname}{Lemma}
\providecommand{\propositionname}{Proposition}
\providecommand{\remarkname}{Remark}
\providecommand{\theoremname}{Theorem}
\begin{document}
\global\long\def\CC{\mathbb{C}}
\global\long\def\FF{\mathbb{F}}
\global\long\def\EE{\mathbb{E}}
\global\long\def\KK{\mathbb{K}}
\global\long\def\LL{\mathbb{L}}
\global\long\def\NN{\mathbb{N}}
\global\long\def\QQ{\mathbb{Q}}
\global\long\def\RR{\mathbb{R}}
\global\long\def\ZZ{\mathbb{Z}}

\title{The center of the generic $G$-crossed product}

\author{Ofir David}

\keywords{generic crossed products, G-lattices, stably rational extension}

\address{Department of Mathematics, Technion-Israel Institute of Technology,
Haifa 32000, Israel}

\email{eofirdav@tx.technion.ac.il}

\thanks{The author was supported by THE ISRAEL SCIENCE FOUNDATION (grant
No. 1017/12).}
\begin{abstract}
Let $G$ be a finite group and let $\FF$ be a field of characteristic
zero. In this paper we construct a generic $G$-crossed product over
$\FF$ using generic graded matrices. The center of this generic $G$-crossed
product, denoted by $\FF(G)$, is then the invariant field of a suitable
$G$ action on a field of rational functions in several indeterminates.
The main goal of this paper is to study the extensions $\nicefrac{\FF(G)}{\FF}$
given that $\FF$ contains enough roots of unity and determine how
close they are to being purely transcendental.

In particular we show that $\FF(G)/\FF$ is a stably rational extension
for $G=C_{2}\times C_{2n}$ where $n$ is odd and for $G=\left\langle \sigma,\tau\;\mid\;\sigma^{n}=\tau^{2m}=e,\;\tau\sigma\tau^{-1}=\sigma^{-1}\right\rangle $
where $gcd(n,2m)=1$. Furthermore, we prove that if $H,K$ are groups
of coprime orders, then $\FF(H\times K)$ is the fraction field of
$\FF(H)\times\FF(K)$.
\end{abstract}

\maketitle

\section{Introduction}

The Brauer group $Br(\FF)$ of a field $\FF$ is considered as one
of its most important arithmetic invariants. It consists of all the
finite dimensional division algebras central over $\FF$ (or equivalently,
central simple algebras up to Morita equivalence). An important tool
used in studying this group, and more generally the groups $Br(\LL)$
for field extensions $\LL/\FF$, is the generic division algebra $D_{n}$
of degree $n$ over the field $\FF$. It has the remarkable property
that any other central simple algebra of degree $n$ of a field extension
of $\FF$ is a specialization of $D_{n}$. The generic algebra's usefulness
comes mainly from the fact that many nice properties satisfied by
it are inherited by all of its specializations (see \cite{saltman_lectures_1999}
for details). For example, if $D_{n}$ is a crossed product with a
group $G$, then any division algebra of degree $n$ over a field
extension of $\FF$ is also a crossed product with the same group
$G$. Amitsur used this property to show that for suitable integers
$n$ the generic division algebra $D_{n}$ is not a crossed product.
This was the first example found of a noncrossed product division
algebra (see \cite{amitsur_central_1972}).

Let us recall another such property. A central simple algebra is called
cyclic if it is isomorphic to a crossed product with a cyclic group.
The Merkurjev-Suslin theorem \cite{merkurjev_cohomology_1982} states
that over a field $\FF$ containing enough roots of unity, every central
simple algebra is equivalent to a tensor product of cyclic algebras,
or in other words $Br(\FF)$ is generated by the equivalence classes
of cyclic algebras. While Merkurjev-Suslin's proof uses K-cohomology,
another approach to this problem, given prior to their proof, uses
generic algebras. It is not difficult to show that if $D_{n}$ is
equivalent to a product of cyclic algebras, then so is every central
simple algebra of degree $n$ over a field extension of $\FF$. Consider
an algebraic field extension $\FF$ of $\QQ$ containing all roots
of unity. Then it is known that $Br(\FF)$ is trivial, namely the
only central simple algebras over it are matrix algebras, and in particular
$Br(\FF)$ is generated by classes of cyclic algebras (in a trivial
way). Furthermore, by a well known theorem due to Bloch, the property
of generation by classes of cyclic algebras extends to $Br(\FF(x_{1},...,x_{n}))$
whenever the $x_{i}$ are algebraically independent over $\FF$. Using
these ideas, if one could show that the center of $D_{n}$ is a rational
extension of $\FF$, then it will follow that any central simple algebra
of degree $n$ over a field extension of $\FF$ is equivalent to a
product of cyclic algebras, thus proving Merkurjev-Suslin's theorem
for such fields (for more details and a proof of Bloch's theorem
using the Auslander Brumer Faddeev theorem see \cite{fein_brauer_1981},
and for Bloch's original proof see \cite{bloch_torsion_1974}).

The last argument demonstrates the significance of the center $Z_{n}:=Z(D_{n})$
of $D_{n}$ in the study of Brauer groups. In particular, we are interested
in determining if the field extension $Z_{n}/\FF$ is rational (i.e.
purely transcendental), or has some weaker rationality property, namely
it is stably rational, retract rational or just unirational (the definitions
for the different types of rationality extensions are given in \secref{Preliminaries}).

Over the rational field $\QQ$, the first result in this direction
is attributed to Sylvester, who showed in 1883 that $Z_{2}/\QQ$ is
rational (see \cite{sylvester_involution_1883}). About 80 years later,
using the algebra of generic matrices to construct the generic algebra,
Procesi was able to show that $Z_{n}$ is the invariant field of a
suitable $S_{n}$ action on a rational extension of $\QQ$, hence
in particular these field extensions are unirational (see \cite{procesi_non-commutative_1967}).
Applying Procesi's method, Formanek proved the rationality for $n=3,4$
(see \cite{formanek_center_1979,formanek_center_1980} and also \cite{formanek_polynomial_1992}).
Le Bruyn and Bessenrodt in \cite{bessenrodt_stable_1991} proved that
for $n=5,7$ the field $Z_{n}$ is a stably rational extension of
$\QQ$ and Beneish in \cite{beneish_induction_1998} gave a more elementary
proof for these primes. Schofield \cite{schofield_matrix_1992}, Katsylo
\cite{katsylo_stable_1990} and Saltman \cite{saltman_note_1992}
showed that $Z_{nm}$ is stably rationally equivalent to the fraction
field of $Z_{n}\otimes Z_{m}$ whenever $n,m$ are coprime, thus reducing
the problem to the prime power case. Finally, Saltman showed in \cite{saltman_retract_1984}
that $Z_{p}$ is retract rational over $\FF$ for $p$ prime, and
this result can be extended to product of distinct primes. For more
information see Le Bruyn's survey \cite{le_bruyn_centers_1991}.

It is well known that any central simple algebra is Brauer equivalent
to a $G$-crossed product for some finite group $G$, hence it is
only natural to consider generic $G$-crossed products. Indeed, several
equivalent constructions were given by Snider \cite{snider_is_1979}
and Rosset \cite{rosset_group_1978}, who used relation modules, and
by Saltman \cite{saltman_lectures_1999,saltman_retract_1984}, who
used generic $2$-cocycles. As with the generic algebra case, many
properties of the generic crossed product are inherited to all the
$G$-crossed products, and in particular the idea mentioned above
for proving the Merkurjev-Suslin theorem is still applicable here
(see for example \cite{snider_is_1979}).

With this goal in mind, Snider proved that the center of the generic
$G$-crossed product is a rational extension over $\QQ$ where $G$
is the Klein four group and stably rational for Dihedral groups of
order $2n$ over $\QQ\left(\zeta_{2n}\right)$ where $\zeta_{2n}$
is a primitive $2n$-root of unity. Another result due to Saltman
shows that if all the $p$-Sylow subgroups of $G$ are cyclic then
the center of the generic $G$-crossed product is retract rational
over a suitable extension of the base field. Recall that a group $G$
such that all of its $p$-Sylow subgroups are cyclic is a semidirect
product of cyclic groups $C_{n}\rtimes C_{m}$ where $gcd(n,m)=1$
(see \cite{hall_theory_1999}, section 9.4). This family contains
the affine groups $\FF_{q}\rtimes\FF_{q}^{\times}$ of finite fields
which appear in Beneish's work, where she proves that the centers
of the generic division algebras of rank 5 and 7 are stably rational
extensions over $\QQ$. On the other hand, Saltman proved that roughly
speaking these are almost all the groups for which the center may
be close to being a rational extension. More precisely, Saltman showed
that the center can be retract rational only if each $p$-Sylow subgroup
is either cyclic or a product of two cyclic groups.

The goal of this paper is to further study the centers of the generic
crossed products. For a group $G$ and a field $\FF$, denote by $\FF(G)$
the center of the generic $G$-crossed product over the field $\FF$.
In what follows, we always assume that $\FF$ contains a primitive
root of unity of order $\left|G\right|$. The first result deals with
groups containing only cyclic $p$-Sylow subgroups.
\begin{thm}
\label{thm:Main_Theorem_sylow}Let $G=C_{n}\rtimes C_{m},\; gcd(m,n)=1$,
be a group with cyclic $p$-Sylow subgroup. Then:
\begin{enumerate}
\item The extension $\FF(G)/\FF$ is retract rational.
\item If $K=Z(G)\cap C_{m}$ is the kernel of the action of $C_{m}$ on
$C_{n}$ by conjugation, then the extension $\FF(G)/\FF$ is stably
isomorphic to $\FF(\nicefrac{G}{K})/\FF$.
\end{enumerate}
\end{thm}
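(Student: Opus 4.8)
The plan is to argue entirely at the level of $G$-lattices. By the construction of the preceding sections we have $\FF(G)=\FF(M_G)^{G}$, where $M_G$ is the explicit $G$-lattice attached to the generic $G$-crossed product and $G$ acts on $\FF(M_G)$ multiplicatively through the group algebra $\FF[M_G]$; the presence of a primitive $\left|G\right|$-th root of unity is exactly what diagonalizes the action into this multiplicative form. I will then use the standard dictionary relating lattices to rationality (Endo--Miyata, Colliot-Th\'el\`ene--Sansuc, Saltman): fixing a flasque resolution $0\to M_G\to P\to F\to 0$ with $P$ permutation and $F$ flasque, the extension $\FF(M_G)^{G}/\FF$ is retract rational precisely when the flasque class $[M_G]^{fl}$ is invertible and stably rational precisely when $[M_G]^{fl}=0$; moreover, stably isomorphic lattices (those agreeing after adding permutation summands) yield stably isomorphic invariant fields, by the multiplicative no-name lemma. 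These reductions convert both parts of the theorem into purely lattice-theoretic assertions.

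For part (1), take any flasque resolution of $M_G$ and consider the flasque lattice $F$. This is where the hypothesis that every $p$-Sylow subgroup of $G$ is cyclic enters, via a theorem of Endo and Miyata: for a finite group all of whose Sylow subgroups are cyclic, every flasque lattice is invertible. Applying this to $F$ shows $[M_G]^{fl}$ is invertible, and the criterion above gives that $\FF(G)/\FF=\FF(M_G)^{G}/\FF$ is retract rational. Observe that no information about the precise shape of $M_G$ is needed beyond its being a $G$-lattice; retract rationality is forced purely by the cohomological smallness of cyclic-Sylow groups.

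For part (2), first note that inflation along $G\twoheadrightarrow G/K$ identifies invariants: since $K$ acts trivially on the inflated lattice, $\FF(\mathrm{Inf}_{G/K}^{G}M_{G/K})^{G}=\FF(M_{G/K})^{G/K}=\FF(G/K)$. Hence it suffices to prove the lattice statement
\[
M_G\ \sim\ \mathrm{Inf}_{G/K}^{G}M_{G/K}\qquad\text{(stable isomorphism of $G$-lattices),}
\]
after which the no-name lemma yields the desired stable isomorphism of fields. To establish this I would compare the explicit descriptions of $M_G$ and $M_{G/K}$ produced by the generic crossed-product construction: each is assembled from a regular-representation (permutation) piece and a ``cocycle'' piece, and the restriction--inflation machinery relates the two cocycle pieces through $\mathrm{Inf}\colon H^{2}(G/K,-)\to H^{2}(G,-)$. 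The point to extract is that, because $K=Z(G)\cap C_m$ is cyclic, central, and acts trivially on $C_n$, and because $\gcd(m,n)=1$, the discrepancy between $M_G$ and $\mathrm{Inf}_{G/K}^{G}M_{G/K}$ is carried entirely by permutation summands.

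The main obstacle is precisely this last lattice identity. The delicate issue is that inflation of cocycle lattices is not an isomorphism on the nose; one must show that its cokernel, i.e.\ the $K$-contribution sitting in the Lyndon--Hochschild--Serre filtration through $H^{2}(K,-)^{G/K}$ and $H^{1}(G/K,H^{1}(K,-))$, is a permutation $G/K$-lattice and that the resulting extension splits stably. Here the three structural hypotheses are indispensable: centrality of $K$ collapses the relevant $G/K$-action on its cohomology, cyclicity of $K$ reduces that cohomology to a single cyclic (hence permutation-induced) piece, and $\gcd(m,n)=1$ ensures the $p$-primary data of $K$ and of $C_n$ do not interfere, so that no genuinely non-permutation flasque class can survive in the difference. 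Once the difference lattice is shown to be stably permutation, part (2) follows, and together with part (1) it reduces the entire study of $\FF(G)$ to the faithful quotient $\FF(G/K)$.
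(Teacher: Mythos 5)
Your proposal has genuine gaps in both parts, and in both cases the gap is precisely the point the paper is most careful about. For part (1), the criterion you invoke --- ``$\FF(M_G)^G/\FF$ is retract rational precisely when $[M_G]^{fl}$ is invertible'' --- is Saltman's criterion for the situation $\LL(M)^G/\LL^G$ where $\LL/\LL^G$ is a $G$-Galois extension of \emph{fields}. It is false when $G$ acts trivially on the base field, which is the setting here: $\ZZ C_8$ is a permutation lattice, so its flasque class is trivial, yet $\QQ(\ZZ C_8)^{C_8}/\QQ$ is not retract rational. The paper explicitly flags this (``we cannot use directly the invertibility of $Fl(X)$ in case $G$ is a $Z$-group''). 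The missing ingredient is the retract rationality of the Noether field $\FF(\ZZ G)^G/\FF$ itself, which for a nonabelian $Z$-group is a separate theorem of Saltman (existence of a generic Galois extension, requiring the root-of-unity hypothesis); one then composes the two retract rational extensions $\FF\subseteq\FF(\ZZ G)^G\subseteq\FF(\ZZ G\oplus M)^G$ using Kang's theorem and descends to $\FF(M)^G$ by Masuda. This is the content of \lemref{Z-group_is_retract}, and your argument omits it entirely. The Endo--Miyata input (invertibility of the flow lattice for $Z$-groups, checked on the cyclic Sylow subgroups) is the part you do have right.

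For part (2) there are two problems. First, the lattice statement $M_G\sim\mathrm{Inf}_{G/K}^{G}M_{G/K}$ is the entire content of the theorem and you do not prove it; you only describe what would need to be checked. The paper's proof is an explicit computation: a surjection $\pi$ from $\ZZ G/\left\langle \sigma\right\rangle \oplus\ZZ G/\left\langle \tau\right\rangle \oplus\ZZ G$ onto $Fl(G,\{\sigma,\tau\})$ built from three concrete flows $S,T,A$, a hands-on determination of $\ker(\pi)$, and a filtration $0\to\ZZ G/\left\langle \sigma\right\rangle \to\ker(\pi)\to I_{G/\left\langle \tau\right\rangle }\to0$ which identifies $[M]^{fl}$ with the flasque class of the flow lattice on the vertex set $G/\left\langle \tau\right\rangle$ (\thmref{faithful_graphs}). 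Nothing in your LHS-spectral-sequence sketch substitutes for this. Second, and more seriously, even granting the stable equivalence of $G$-lattices, your final step fails: the no-name lemma in the trivial-action setting (condition (1) of \thmref{reduction_by_flasque}) requires \emph{both} lattices to be faithful, and $\mathrm{Inf}_{G/K}^{G}M_{G/K}$ is not faithful since $K$ acts trivially on it. Restoring faithfulness by adding $\ZZ G$ leaves you with $\FF(\mathrm{Inf}M_{G/K}\oplus\ZZ G)^G$, and identifying this with $\FF(M_{G/K}\oplus\ZZ(G/K))^{G/K}$ is a genuine field-theoretic statement: one must show the residual $G/K$-action on $\FF(\cdots)^K$ still gives a rational extension, which the paper does by a generalization of Fischer's theorem using Hilbert 90 and the roots of unity in $\FF$ (\lemref{Fischer-generalize} and \thmref{Now_we_are_faithful}). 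This step is not a formal consequence of centrality and cyclicity of $K$, and it is absent from your argument.
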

Combining Snider's result for Dihedral groups and part (2) of the
theorem above we conclude the following.
\begin{cor}
\label{cor:generalized_diehedral}Let. $G=\left\langle \sigma,\tau\;\mid\;\sigma^{n}=\tau^{2m}=e,\;\tau\sigma\tau^{-1}=\sigma^{-1}\right\rangle $
where $gcd(n,2m)=1$. Then $\FF(G)/\FF$ is a stably rational extension.
\end{cor}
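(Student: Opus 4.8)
The plan is to realize $G$ as an instance of the groups covered by \thmref{Main_Theorem_sylow} and then transport Snider's rationality result through the stable isomorphism supplied by part (2). First I would write $G=\langle\sigma\rangle\rtimes\langle\tau\rangle=C_{n}\rtimes C_{2m}$ with $\langle\tau\rangle$ cyclic of order $2m$ and $\gcd(n,2m)=1$, so that the hypotheses of \thmref{Main_Theorem_sylow} are met with the role of the cyclic factor $C_{m}$ in the theorem played here by $C_{2m}$. To check that $G$ has cyclic $p$-Sylow subgroups, note that for each prime $p$ dividing $|G|=2mn$ exactly one of $p\mid n$ or $p\mid 2m$ holds by coprimality, so the Sylow $p$-subgroup lies inside the cyclic factor $\langle\sigma\rangle$ or $\langle\tau\rangle$ respectively and is therefore cyclic.

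The key group-theoretic step is to compute the kernel $K$ and identify the quotient. Since $\tau^{k}\sigma\tau^{-k}=\sigma^{(-1)^{k}}$, the element $\tau^{k}$ centralizes $\sigma$ exactly when $k$ is even; hence $K=Z(G)\cap\langle\tau\rangle=\langle\tau^{2}\rangle$, a cyclic group of order $m$. Passing to the quotient, and using that $\langle\sigma\rangle\cap\langle\tau^{2}\rangle=\{e\}$ (again by coprimality), so that $\bar{\sigma}$ retains order $n$, I obtain
\[
 G/K=\bigl\langle\,\bar{\sigma},\bar{\tau}\;\mid\;\bar{\sigma}^{n}=\bar{\tau}^{2}=e,\;\bar{\tau}\bar{\sigma}\bar{\tau}^{-1}=\bar{\sigma}^{-1}\,\bigr\rangle,
\]
which is precisely the dihedral group $D_{n}$ of order $2n$. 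By part (2) of \thmref{Main_Theorem_sylow} the extension $\FF(G)/\FF$ is then stably isomorphic to $\FF(G/K)/\FF=\FF(D_{n})/\FF$.

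Next I would invoke Snider's theorem, quoted in the introduction, that the center of the generic crossed product with the dihedral group of order $2n$ is stably rational over $\QQ(\zeta_{2n})$. Since $\FF$ is assumed to contain a primitive root of unity of order $|G|=2mn$ and $2n\mid 2mn$, we have $\QQ(\zeta_{2n})\subseteq\FF$, so Snider's conclusion yields that $\FF(D_{n})/\FF$ is stably rational. Finally I would close the argument by recalling that stable rationality is preserved under stable isomorphism: if $\FF(D_{n})(w_{1},\dots,w_{t})$ is rational over $\FF$ and $\FF(G)(y_{1},\dots,y_{r})\cong\FF(D_{n})(z_{1},\dots,z_{s})$ over $\FF$, then adjoining $t$ further indeterminates to both sides exhibits $\FF(G)(y_{1},\dots,y_{r},w_{1}',\dots,w_{t}')$ as a rational extension of $\FF$, so $\FF(G)/\FF$ is stably rational as well.

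I do not expect a genuine obstacle, since all the heavy machinery is packaged into \thmref{Main_Theorem_sylow} and Snider's result; the only points requiring care are the correct identification $G/K\cong D_{n}$ and the verification that the roots of unity assumed in $\FF$ suffice to apply Snider's theorem over $\FF$ rather than merely over $\QQ(\zeta_{2n})$, the latter following from the field-independence of these lattice-theoretic rationality properties once enough roots of unity are present.
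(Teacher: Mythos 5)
Your proposal is correct and follows essentially the same route as the paper, which likewise obtains the corollary by combining part (2) of \thmref{Main_Theorem_sylow} with Snider's stable rationality result for dihedral groups, after identifying $K=Z(G)\cap\left\langle \tau\right\rangle =\left\langle \tau^{2}\right\rangle $ and $\nicefrac{G}{K}$ with the dihedral group of order $2n$ (see also the remark following \thmref{faithful_graphs}). Your added care about transporting Snider's result from $\QQ(\zeta_{2n})$ to $\FF$ and about stable rationality being preserved under stable isomorphism matches the paper's standing conventions.
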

Recall that for the standard generic division algebra, the question
of rationality can be reduced to the prime power case. The next result
is its counterpart for generic crossed products.
\begin{thm}
\label{thm:coprime_invariants}Let $G\cong H\times K$ where $H,K$
are groups of coprime orders. Then $\FF(G)/\FF$ is stably rationally
equivalent to the fraction field of $\FF(H)\otimes\FF(K)$.
\end{thm}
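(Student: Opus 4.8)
The plan is to reduce the statement to a computation of invariant fields of $G$-lattices. Recall from the construction of the preceding sections that $\FF(G)=\FF(M_{G})^{G}$ for an explicit $G$-lattice $M_{G}$ attached to the generic $G$-crossed product, and likewise $\FF(H)=\FF(M_{H})^{H}$ and $\FF(K)=\FF(M_{K})^{K}$. I view $M_{H}$ and $M_{K}$ as $G$-lattices through the projections $G\twoheadrightarrow H$ and $G\twoheadrightarrow K$, so that $K$ acts trivially on $M_{H}$ and $H$ acts trivially on $M_{K}$. I will realize $\operatorname{Frac}(\FF(H)\otimes_{\FF}\FF(K))$ as the invariant field $\FF(M_{H}\oplus M_{K})^{G}$ of this product action, and then show that $M_{G}$ and $M_{H}\oplus M_{K}$ are stably equivalent as $G$-lattices; by the no-name lemma the latter yields a stable isomorphism of the corresponding invariant fields, which is the assertion of the theorem.

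For the first, purely formal, part I would take invariants one factor at a time, using $\FF(M_{H}\oplus M_{K})^{G}=\bigl(\FF(M_{H}\oplus M_{K})^{H}\bigr)^{K}$. Since $H$ fixes the coordinates of $M_{K}$, writing $E=\FF(M_{H})$ with $E^{H}=\FF(H)$ and treating the $s=\operatorname{rank}M_{K}$ coordinates $t_{1},\dots,t_{s}$ of $M_{K}$ as indeterminates on which $H$ acts trivially, the elementary fact that $E(t_{1},\dots,t_{s})^{H}=E^{H}(t_{1},\dots,t_{s})$ gives $\FF(M_{H}\oplus M_{K})^{H}=\FF(H)(t_{1},\dots,t_{s})$, with $K$ now acting through $M_{K}$ and trivially on $\FF(H)$. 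Because $\operatorname{char}\FF=0$ makes $|K|$ invertible, taking $K$-invariants commutes with the flat base change $\FF\to\FF(H)$, so
\[
\bigl(\FF(H)(M_{K})\bigr)^{K}=\operatorname{Frac}\bigl(\FF(H)\otimes_{\FF}\FF[M_{K}]^{K}\bigr)=\operatorname{Frac}\bigl(\FF(H)\otimes_{\FF}\FF(K)\bigr).
\]
Here the tensor product is a domain because $\FF(H)/\FF$ is a regular extension: $\FF$ is algebraically closed in the purely transcendental field $\FF(M_{H})$, hence in its subfield $\FF(H)$. This identifies the right-hand side of the theorem.

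The substantive part is the stable equivalence $M_{G}\sim M_{H}\oplus M_{K}$ of $G$-lattices, and this is the only place where $\gcd(|H|,|K|)=1$ is genuinely used. I would exploit coprimality through two inputs: the decomposition $\ZZ[G]=\ZZ[H]\otimes_{\ZZ}\ZZ[K]$ of the regular representation, and the vanishing of the cohomological cross terms for coprime orders, so that a free resolution of $\ZZ$ over $\ZZ[G]$ may be taken to be the tensor product of free resolutions over $\ZZ[H]$ and $\ZZ[K]$. Feeding this into the explicit description of $M_{G}$ coming from the Galois part and the cocycle part of the generic crossed product, the relevant syzygy lattices break up into the corresponding lattices for $H$ and for $K$, together with mixed contributions such as $I_{H}\otimes_{\ZZ}I_{K}$ (where $I_{H}=\ker(\ZZ[H]\to\ZZ)$) and induced projective summands.

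The hard part will be showing that all of these mixed and projective contributions are stably permutation $G$-lattices, so that they add nothing to the invariant field beyond transcendentals and do not alter the flasque class. Coprimality is exactly what collapses them: on the level of rational representations one has, for instance, $[I_{H}\otimes_{\ZZ}I_{K}]=[\ZZ[G]]-[\ZZ[G/K]]-[\ZZ[G/H]]+[\ZZ]$, a virtual sum of permutation lattices, and the coprime K\"unneth splitting (reflecting $H^{\mathrm{ab}}\otimes_{\ZZ}K^{\mathrm{ab}}=0$, i.e.\ $M(H\times K)=M(H)\times M(K)$ on Schur multipliers) upgrades this to an integral stably permutation equivalence. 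Verifying that the flasque class of $M_{G}$ coincides with that of $M_{H}\oplus M_{K}$ --- equivalently, pinning down the permutation summands integrally rather than only rationally --- is where I expect the real work to lie; the remaining reductions are formal.
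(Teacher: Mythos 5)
Your formal identification of $\operatorname{Frac}(\FF(H)\otimes\FF(K))$ with $\FF(M_{H}\oplus M_{K})^{G}$ is fine, but the substantive step --- that $M_{G}$ and $M_{H}\oplus M_{K}$ have the same flasque class --- is exactly the part you defer, and the route you sketch for it breaks down. Concretely, the mixed term $I_{H}\otimes_{\ZZ}I_{K}$ is in general \emph{not} stably permutation, and not even flasque: restricting to $H$, this lattice becomes $I_{H}^{\oplus(\left|K\right|-1)}$ because $H$ acts trivially on $I_{K}$, and the long exact sequence attached to $0\to I_{H}\to\ZZ H\to\ZZ\to0$ gives $\hat{H}^{-1}(H,I_{H})\cong\hat{H}^{-2}(H,\ZZ)\cong H^{\mathrm{ab}}$, whence $\hat{H}^{-1}(H,I_{H}\otimes I_{K})\cong(H^{\mathrm{ab}})^{\left|K\right|-1}\neq0$ whenever $H^{\mathrm{ab}}\neq0$ and $\left|K\right|>1$ (already for $C_{2}\times C_{3}$ one has $I_{C_{2}}\otimes I_{C_{3}}\cong\ZZ[\zeta_{6}]$ with $\hat{H}^{-1}(C_{2},\ZZ[\zeta_{6}])\cong(\nicefrac{\ZZ}{2\ZZ})^{2}$). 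Since every direct summand of a permutation lattice is flasque, no addition of permutation summands can repair this; the identity $[I_{H}\otimes I_{K}]=[\ZZ G]-[\ZZ\nicefrac{G}{K}]-[\ZZ\nicefrac{G}{H}]+[\ZZ]$ holds only rationally and does not upgrade to an integral stable equivalence. More fundamentally, the lattice-level statement $\left[M_{G}\right]^{fl}=\left[M_{H}\oplus M_{K}\right]^{fl}$ is strictly stronger than the theorem (equality of flasque classes implies stable isomorphism of the invariant fields by \thmref{reduction_by_flasque}, not conversely), and neither this paper nor the ungraded results of Katsylo, Schofield and Saltman establish it.

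The proof in the paper goes a genuinely different way, adapting Saltman's generic 2-cocycle argument. One introduces the generic $G$ 2-cocycle $c$ (so that $\FF(c)^{G}\approx\FF(G)$), the inflations $\alpha,\beta$ of generic $H$- and $K$-cocycles, and the twisted multiplicative field $\FF(\alpha,\beta,c)_{\alpha\beta c}(I_{G})$; its $G$-invariants are shown to be rational over $\FF(\alpha,\beta)^{G}=\operatorname{Frac}\left(\FF(\alpha)^{H}\otimes_{\FF}\FF(\beta)^{K}\right)$ on one side and stably isomorphic to $\FF(c)^{G}$ on the other. Coprimality enters not through a K\"unneth decomposition of lattices but through cohomology with coefficients in $\LL^{\times}$: the inflation--restriction sequence together with the fact that $H^{2}(K,\LL^{\times})$ is $\left|K\right|$-torsion lets one factor any class as $\gamma_{H}\gamma_{K}$ with both factors inflated, and Hilbert 90 is used to split the relevant cocycles after adjoining the variables of $I_{G/H}$ and $I_{G/K}$. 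This multiplicative, Brauer-theoretic input has no counterpart in your untwisted lattice picture, which is precisely why the mixed terms refuse to become permutation lattices there.
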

Using Snider's result for the Klein four group we conclude the following.
\begin{cor}
The extension $\FF(C_{2}\times C_{2n})/\FF$ is stably rational whenever
$n$ is odd and $\FF$ contains a primitive $n$-th root of unity.
\end{cor}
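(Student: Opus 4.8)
The plan is to reduce the corollary to Theorem~\ref{thm:coprime_invariants} together with Snider's rationality result for the Klein four group. First I would exploit the hypothesis that $n$ is odd: since $\gcd(2,n)=1$, the Chinese Remainder Theorem gives $C_{2n}\cong C_2\times C_n$, and hence
\[
C_2\times C_{2n}\;\cong\;C_2\times C_2\times C_n\;=\;(C_2\times C_2)\times C_n.
\]
Writing $H=C_2\times C_2$ for the Klein four group and $K=C_n$, the orders $|H|=4$ and $|K|=n$ are coprime (again because $n$ is odd), so Theorem~\ref{thm:coprime_invariants} applies and shows that $\FF(C_2\times C_{2n})/\FF$ is stably rationally equivalent to $\mathrm{Frac}\bigl(\FF(C_2\times C_2)\otimes_\FF\FF(C_n)\bigr)$.

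Next I would identify each tensor factor as a rational extension of $\FF$. By Snider's theorem the center $\FF(C_2\times C_2)$ of the generic crossed product for the Klein four group is purely transcendental over $\FF$; I note that this requires only a primitive square root of unity, i.e. $-1\in\FF$, which holds automatically in characteristic zero. For the cyclic factor, $\FF(C_n)$ is the invariant field of a $C_n$-action on a rational function field, and since $\FF$ contains a primitive $n$-th root of unity this action can be diagonalized, so that $\FF(C_n)$ is again purely transcendental over $\FF$; this is the classical cyclic case and is the point at which the root-of-unity hypothesis of the corollary enters. Thus $\FF(C_2\times C_2)\cong\FF(\underline{s})$ and $\FF(C_n)\cong\FF(\underline{t})$ for finite families of independent indeterminates $\underline{s},\underline{t}$.

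Having both factors rational, the fraction field of their tensor product is itself purely transcendental, since $\FF(\underline t)[\underline s]$ is a polynomial ring over a field and hence $\mathrm{Frac}\bigl(\FF(\underline s)\otimes_\FF\FF(\underline t)\bigr)=\FF(\underline s,\underline t)$. Feeding this back into the stable-rationality equivalence of the first paragraph, $\FF(C_2\times C_{2n})$ becomes stably isomorphic to a purely transcendental extension of $\FF$, which is exactly the assertion that $\FF(C_2\times C_{2n})/\FF$ is stably rational.

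The genuine content lies in Theorem~\ref{thm:coprime_invariants} and Snider's computation, and I expect the remaining steps to be formal; the main point to handle carefully is the bookkeeping of roots of unity. The blanket assumption of the paper provides roots of unity of order $|G|=4n$, whereas the corollary asks only for a primitive $n$-th root of unity, so I would verify that the two coprime constituents $C_2\times C_2$ and $C_n$ each need only roots of unity of order dividing their exponents $2$ and $n$ respectively, and that $-1$ is always at hand. A secondary technical point is that Snider's rationality for the Klein four group, originally stated over $\QQ$, must be seen to descend to our field $\FF$; this follows from the good behavior of the generic crossed product construction under base change from $\QQ$ to $\FF$, so that rationality over $\QQ$ yields rationality over $\FF$.
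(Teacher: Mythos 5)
Your proof is correct and follows essentially the same route the paper intends: decompose $C_{2}\times C_{2n}\cong(C_{2}\times C_{2})\times C_{n}$ using that $n$ is odd, apply Theorem \ref{thm:coprime_invariants} to the coprime factors, and invoke Snider's rationality for the Klein four group together with the cyclic case (Fischer's theorem, cf.\ Remark \ref{rem:cyclic-groups}) to see that the fraction field of $\FF(C_{2}\times C_{2})\otimes\FF(C_{n})$ is rational. Your extra bookkeeping of which roots of unity are needed where is a worthwhile clarification but does not change the argument.
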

Interestingly, it is still unknown whether $\FF(G)/\FF$ is stably
rational for $G=C_{p}\times C_{p}$ with $p\geq3$ prime.\\

The paper is organized as follows.

We start with the construction of the generic crossed product in \secref{Generic-Crossed-Products}
using generic graded matrices and give a description of its center
using $G$-lattices. In \secref{Preliminaries} we recall the definitions
and main results needed from the theory of $G$-lattices and their
field invariants.

In \secref{Flows-in-Graphs} we introduce the concept of flows in
graphs and use them in order to construct the $G$-lattices appearing
both in study of center of the generic division algebra and generic
crossed product. The main results of this paper are proved in \secref{The-center}
where we restrict to the flows representing the center of generic
crossed products. Finally, in \secref{The-ungraded-center} we recall
some results on the center of the standard generic division algebra
and interpret them using the language of flows in graphs.

\section{\label{sec:Generic-Crossed-Products}Generic Crossed Products}

Fix a field $\FF$ of characteristic zero. All the rings in this section
are algebras over $\FF$, and the homomorphism are always $\FF$-homomorphisms.

The generic division algebra and its center have been extensively
studied in the literature. A detailed account can be found in \cite{formanek_polynomial_1992}
and in Le Bruyn's survey \cite{le_bruyn_centers_1991} which details
the motivation and different approaches to study the center of these
generic algebras. An analogous object can be defined for the class
of $G$-crossed products for a fixed finite group $G$. It has several
equivalent constructions in the literature using relation modules
by Snider and Rosset \cite{snider_is_1979,rosset_group_1978} and
generic 2-cocycle by Saltman \cite{saltman_retract_1984,saltman_lectures_1999}.
A more general approach using graded polynomial identities was studied
by Aljadeff and Karasik \cite{aljadeff_polynomial_????}. The polynomial
identities construction was first used for the standard generic division
algebra, and can be further generalized to generic algebras for other
classes of graded simple algebra (which include crossed product) and
even to generic Hopf algebras \cite{aljadeff_polynomial_2008}. The
approach utilized here uses generic graded matrices which is very
similar to the one with graded polynomial identities.\\

Usually, $G$-crossed products are defined over Galois extensions
of fields. For the generic crossed product definition we require the
more generalized definition of Galois extensions of rings, which we
briefly describe. For a full treatment of Galois extension of commutative
rings and the crossed products defined over them, we refer the reader
to \cite{meyer_separable_1971} and \cite{orzech_brauer_1975}.
\begin{defn}[Galois Extension]
\label{def:Galois}Let $R\subseteq S$ be an extension of unital
commutative rings, and let $G$ be a finite subgroup of $Aut(S)$.
We say that $S$ is a \emph{$G$-Galois extension} of $R$ if
\begin{enumerate}
\item $S$ is a faithful $R$ algebra.
\item $S^{G}=R$.
\item There are $x_{1},...,x_{n},y_{1},...,y_{n}\in S$ for some $n\in\NN$
such that $\sum_{i}x_{i}g(y_{i})=\delta_{e,g}$ where $\delta_{e,g}=1$
if $g=e$ and zero otherwise. 
\end{enumerate}
\end{defn}

\begin{defn}[$G$-Crossed Product]
\label{def:crossed_product}Let $S$ be a $G$-Galois extension of
$R$ and let $c\in Z^{2}(G,S^{\times})$ be a 2-cocycle, namely $c$
is a function from $G\times G$ to $S^{\times}$ that satisfies
\begin{eqnarray*}
\mbox{for all }g_{1},g_{2},g_{3}\in G & \qquad & \quad c(g_{1},g_{2})c(g_{1}g_{2},g_{3})=g_{1}\left(c(g_{2},g_{3})\right)c(g_{1},g_{2}g_{3}).
\end{eqnarray*}
Consider the $R$-module $\bigoplus_{g\in G}S\cdot\epsilon_{g}$ with
a multiplication defined by 
\[
\mbox{for all }\alpha,\beta\in S,\; g,h\in G\qquad\alpha\epsilon_{g}\beta\epsilon_{h}=\alpha g(\beta)\epsilon_{g}\epsilon_{h}=\alpha g(\beta)c(g,h)\epsilon_{gh}.
\]
This algebra is called a \emph{$G$-crossed product }and is denoted
by $\Delta(S/R,G,c)$. 
\end{defn}
Crossed products over Galois extensions of fields are essential in
the study of central simple algebras. Their generalized versions play
a similar role in the study of Azumaya algebras and have many similar
properties. In particular, cohomologous 2-cocycles produce isomorphic
crossed product, so we can assume that the two cocycles are always
normalized, namely $\epsilon_{e}$ is the unity of $\Delta(S/R,G,c)$,
and hence we identify $S$ with $S\cdot\epsilon_{e}$ and $R$ with
$R\cdot\epsilon_{e}$. Under this notation, the crossed product $\Delta(S/R,G,c)$
is an Azumaya algebra central over $S^{G}=R$.\\

Recall that a \emph{$G$-grading} of an $S$-algebra $A$, is a decomposition
$A={\displaystyle \bigoplus_{g\in G}}A_{g}$ as an $S$-module such
that $A_{g}A_{h}\subseteq A_{gh}$. Clearly, setting $\Delta_{g}=S\cdot\epsilon_{g}$
produces a $G$-grading of $\Delta:=\Delta(S/R,G,c)$. It is well
known that any crossed product over a Galois extension of fields is
central simple, and therefore after suitable scalars extension it
becomes a matrix algebra. The crossed product's natural group grading
induces a grading on that matrix algebra which we now describe.
\begin{defn}[Elementary Grading]
Let $R$ be an $\FF$-algebra, $A=M_{n}(R)$ and $\bar{g}=(g_{1},...,g_{n})\in G^{n}$
be a tuple of length $n$. The \emph{elementary grading} on $A$ induced
by $\bar{g}$ is defined by $A_{h}=span_{R}\left\{ E_{i,j}\;\mid\; g_{i}^{-1}g_{j}=h\right\} $,
where $E_{i,j}$ is the matrix with $1$ in the $(i,j)$ coordinate
and zero elsewhere. In case each element of $G$ appears exactly once
in $\bar{g}$, the induced elementary grading is called the \emph{crossed
product grading}.\end{defn}
\begin{rem}
Note that reordering the tuple $\bar{g}$ produces graded isomorphic
algebras. 
\end{rem}
Let $M_{n}(R)$ have the crossed product grading with a tuple $\bar{g}=(g_{1},...,g_{n})\in G^{n}$,
where $n=\left|G\right|$. Identifying $G$ with $\left\{ 1,...,n\right\} $,
we write $E_{g_{i},g_{j}}$ instead of $E_{i,j}$. Let $S$ be the
subalgebra of diagonal matrices in $M_{n}(R)$ and set $P_{g}$ to
be the permutation matrix $P_{g}:={\displaystyle \sum_{h\in G}}E_{h,hg}$.
These permutation matrices play the roles of $\epsilon_{g}$ in the
definition of $G$-crossed products, and since $P_{g}P_{h}=P_{gh}$,
the corresponding two cocycle is trivial. Letting $G$ act on $S$
by $g(s)=P_{g}sP_{g}^{-1}$ for $g\in G$ and $s\in S$ we get that
$S^{G}\cong R$ are the scalar matrices and $S/S^{G}$ is a $G$-Galois
extension and therefore $M_{n}(R)\cong\Delta\left(S/R,G,\mathsf{1}\right)$
with $(M_{n}(R))_{g}=S\cdot P_{g}$. Unless otherwise stated, we will
always assume that the grading on $M_{n}(R)$ is the crossed product
grading. 
\begin{thm}
If $\Delta=\Delta(S/R,G,c)$ is a $G$-crossed product where $R$
is an integral domain, then there is some field $R\subseteq\LL$ such
that $\Delta\otimes_{R}\LL\cong M_{\left|G\right|}(\LL)$ as graded
algebras where $M_{\left|G\right|}(\LL)$ has the crossed product
grading.\end{thm}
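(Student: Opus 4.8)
The plan is to split the Galois extension $S$ and trivialize the cocycle $c$ after a suitable extension of scalars, so that $\Delta$ turns into the split crossed product $\Delta(S/R,G,\mathsf{1})$, which the discussion preceding the statement already identifies with the crossed product grading on a matrix algebra.

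First I would reduce to a base field. Since $R$ is an integral domain, put $K=\mathrm{Frac}(R)$. Extension of scalars commutes with the crossed product construction: for any commutative $R$-algebra $T$ one has, as $G$-graded algebras, $\Delta(S/R,G,c)\otimes_{R}T\cong\Delta(S\otimes_{R}T/T,G,c_{T})$, where $c_{T}$ is the image of $c$; and the three conditions of \defref{Galois} are visibly preserved under base change, so $S\otimes_{R}T$ is again a $G$-Galois extension of $T$. In particular $S_{K}:=S\otimes_{R}K$ is a $G$-Galois extension of the field $K$, hence a commutative separable $K$-algebra of dimension $\left|G\right|$.

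Next I would pass to a splitting field. Choose a finite extension $\LL\supseteq K$ over which the separable algebra $S_{K}$ splits completely, so that $S_{\LL}:=S_{K}\otimes_{K}\LL\cong\prod_{g\in G}\LL=\mathrm{Map}(G,\LL)$ as $\LL$-algebras. The $G$-action then permutes the $\left|G\right|$ primitive idempotents, and since the invariant subalgebra equals $\LL$ there is a single orbit; a count of idempotents shows the action is free, hence the regular representation, so $S_{\LL}/\LL$ is the split $G$-Galois extension. It remains to trivialize the cocycle: over the split extension $S_{\LL}^{\times}=\mathrm{Map}(G,\LL^{\times})$ is the $G$-module coinduced from the trivial subgroup, so Shapiro's lemma gives $H^{2}(G,S_{\LL}^{\times})\cong H^{2}(\{e\},\LL^{\times})=0$. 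Hence the image $c_{\LL}$ of $c$ is a coboundary, $c_{\LL}=\partial b$ for some $1$-cochain $b\colon G\to S_{\LL}^{\times}$, and the assignment $\epsilon_{g}\mapsto b(g)\epsilon_{g}$ is a graded isomorphism $\Delta(S_{\LL}/\LL,G,c_{\LL})\cong\Delta(S_{\LL}/\LL,G,\mathsf{1})$, graded because it rescales each homogeneous component $S_{\LL}\cdot\epsilon_{g}$ into itself. Finally, the split extension with its regular $G$-action is exactly the diagonal subalgebra of $M_{\left|G\right|}(\LL)$ appearing in the construction preceding the statement, with $\epsilon_{g}$ corresponding to the permutation matrix $P_{g}$; this gives a graded isomorphism $\Delta(S_{\LL}/\LL,G,\mathsf{1})\cong M_{\left|G\right|}(\LL)$ with the crossed product grading. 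Composing everything with $\Delta\otimes_{R}\LL=(\Delta\otimes_{R}K)\otimes_{K}\LL$ yields the claim.

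The hard part, I expect, is the grading bookkeeping rather than any single deep fact: one must check that splitting $S_{K}$ produces precisely the regular representation, so that each element of $G$ indexes exactly one diagonal slot and one obtains the crossed product grading rather than a coarser elementary grading, and that the coboundary correction stays inside the homogeneous components. The two inputs that might look like the crux --- the vanishing $H^{2}(G,S_{\LL}^{\times})=0$ via Shapiro's lemma and the compatibility of the crossed product with base change --- are both standard.
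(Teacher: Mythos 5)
Your proposal is correct, and it shares the paper's skeleton --- pass to the fraction field of $R$, then to a finite extension $\LL$ splitting $S$ into $\LL^{\left|G\right|}$ with $G$ permuting the idempotents regularly --- but it finishes by a genuinely different route. The paper's proof never trivializes the cocycle: it invokes central simplicity to get an abstract isomorphism $\Delta\otimes_{R}\LL\cong M_{\left|G\right|}(\LL)$, repositions $S$ onto the diagonal by a further (Skolem--Noether type) automorphism, and then recovers the grading a posteriori by observing that $\epsilon_{g}P_{g}^{-1}$ centralizes the diagonal and hence $\epsilon_{g}=t_{g}P_{g}$, so $\Delta_{g}=S\cdot P_{g}$. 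You instead kill the cocycle directly, using that $S_{\LL}^{\times}\cong\mathrm{Map}(G,\LL^{\times})$ is coinduced from the trivial subgroup so that $H^{2}(G,S_{\LL}^{\times})=0$ by Shapiro's lemma, and then quote the explicit identification $\Delta(S_{\LL}/\LL,G,\mathsf{1})\cong M_{\left|G\right|}(\LL)$ already set up before the theorem. Your version has the advantage that every step stays inside the crossed-product presentation, so the grading is visibly preserved throughout and no bookkeeping about where $S$ lands under an abstract isomorphism is needed; the cost is the cohomological input, which the paper's commutant argument avoids (though the paper's unexplained step ``$\Delta_{\LL}$ is central simple, hence $\cong M_{\left|G\right|}(\LL)$'' secretly needs the same splitness, e.g. via the $\left|G\right|$ orthogonal idempotents). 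Two small points you wave at rather than prove --- that the three conditions defining a Galois extension of rings persist under base change (the invariance condition $(S\otimes_{R}T)^{G}=T$ is the only non-obvious one) and that $S$ has constant rank $\left|G\right|$ over $R$ --- are standard facts from the cited theory of Galois extensions of commutative rings, and the paper's own sketch glosses over them in exactly the same way.
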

\begin{proof}
This result is well known, see for example Theorem 10 in \cite{aljadeff_crossed_2013}.
For clarity we give here a sketch of the proof.

First, by extending the scalars by the field of fractions of $R$,
we may assume that $R$ is a field. Since $S/R$ is Galois with $R$
a field, we get that $S\cong\LL^{k}$ where $\LL/R$ is a field extension
and $\left[\LL:R\right]\cdot k=\left|G\right|$. Thus, extending the
scalars by $\LL$, we get the Galois extension $\LL^{n}/\LL$ where
the group $G$ acts on $\LL^{n}$ by permuting the idempotents via
the regular representation of $G$. 

Since $\LL$ is a field and $\Delta_{\LL}:=\Delta\otimes_{R}\LL$
is central simple, we have an isomorphism $\Delta_{\LL}\cong M_{\left|G\right|}(\LL)$.
Secondly, since $S\cong\LL^{k}$, we may assume (after another autmorphism)
that $S$ is the set of diagonal matrices and $R$ is the set of scalar
matrices. Under this identification, the elements $\epsilon_{g}P_{g}^{-1}$
commute with the diagonal matrices, and therefore must be diagonal,
so that $\epsilon_{g}=t_{g}P_{g}$ for some $t_{g}$ invertible and
diagonal. It follows that $\Delta_{g}=S\epsilon_{g}=S\cdot P_{g}$
is exactly the crossed product grading.
\end{proof}
With the crossed product grading in mind, we turn to construct the
generic $G$-crossed product. Define a $g$-generic matrix to be
\[
X_{g,i}=\left(\begin{array}{cccc}
x_{g_{1},g_{1}g}^{(i)} & 0 & \cdots & 0\\
0 & x_{g_{2},g_{2}g}^{(i)} &  & \vdots\\
\vdots &  & \ddots & 0\\
0 & \cdots & 0 & x_{g_{n},g_{n}g}^{(i)}
\end{array}\right)P_{g},
\]
which is homogeneous of degree $g$ in $M_{\left|G\right|}(\FF(x_{h,g}^{(i)}))$
with the crossed product grading, where the $x_{h,g}^{(i)}$ are algebraically
independent over $\FF$. Let $R_{G,k}$ be the unital $\FF$-subalgebra
of $M_{\left|G\right|}(\FF(x_{h,g}^{(i)}))$ generated by $\left\{ X_{g,i}\;\mid\; g\in G,\;1\leq i\leq k\right\} $.
The generic crossed product $D_{G,k}$ is $R_{G,k}$ after inverting
all the nonzero central elements. 

Next, we give a presentation of the $e$ component of $D_{G,k}$ as
a Galois field extension of the center of $D_{G,k}$.
\begin{thm}
Let $\LL\leq\FF\left(x_{g,h}^{(i)}\right)$ be the field extension
of $\FF$ generated by elements of the form
\[
x_{h,hg_{1}}^{(j_{1})}x_{hg_{1},hg_{1}g_{2}}^{(j_{2})}x_{hg_{1}g_{2},hg_{1}g_{2}g_{3}}^{(j_{3})}\cdots x_{h\prod_{1}^{m-1}g_{i},h\prod_{1}^{m}g_{i}}^{(j_{m})},
\]
where $m,j_{1},...,j_{m}\in\NN$, $h,g_{1},...,g_{m}\in G$ and $g_{1}\cdots g_{m}=e$.
Then $\left(D_{G,k}\right)_{e}\cong\LL$ and $Z(D_{G,k})\cong\LL^{G}$
where the $G$-action is defined by $\sigma(x_{h,g}^{(i)})=x_{\sigma h,\sigma g}^{(i)}$.\end{thm}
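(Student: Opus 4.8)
The plan is to compute products of the generic matrices explicitly, read off the degree-$e$ component, and then transport the standard crossed-product description of the center through the resulting identification. Writing $X_{g,i}=\sum_{h\in G}x_{h,hg}^{(i)}E_{h,hg}$ (which is what the displayed diagonal matrix times $P_{g}$ equals), a direct induction gives
\[
X_{g_{1},j_{1}}X_{g_{2},j_{2}}\cdots X_{g_{m},j_{m}}=\sum_{h\in G}\Big(x_{h,hg_{1}}^{(j_{1})}x_{hg_{1},hg_{1}g_{2}}^{(j_{2})}\cdots x_{h\prod_{1}^{m-1}g_{i},h\prod_{1}^{m}g_{i}}^{(j_{m})}\Big)E_{h,h\prod_{1}^{m}g_{i}},
\]
which is homogeneous of degree $g_{1}\cdots g_{m}$. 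Hence such a product lies in the $e$-component exactly when $g_{1}\cdots g_{m}=e$, and then it is the diagonal matrix whose $(h,h)$-entry is precisely the monomial generator of $\LL$ based at $h$. Thus $(R_{G,k})_{e}$ is the $\FF$-span of these diagonal matrices.

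I would then introduce the projection $\pi\colon(R_{G,k})_{e}\to\FF(x_{h,g}^{(i)})$ onto the $(e,e)$-entry. Distinct words $((g_{1},j_{1}),\dots,(g_{m},j_{m}))$ with $\prod g_{i}=e$ yield distinct monomials in the $(e,e)$-slot, so the corresponding products are $\FF$-linearly independent and $\pi$ is an injective $\FF$-algebra map identifying $(R_{G,k})_{e}$ with the subalgebra $A\subseteq\FF(x_{h,g}^{(i)})$ generated by the $\LL$-generators based at $e$.

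Next I would pass to $D_{G,k}$. Granting, as is standard for generic crossed products, that $D_{G,k}$ is a $G$-graded division algebra whose center sits in degree $e$, the component $(D_{G,k})_{e}$ is a field and every nonzero central element $c$ of $R_{G,k}$ lies in $(R_{G,k})_{e}$; therefore $(D_{G,k})_{e}=(R_{G,k})_{e}[C^{-1}]$ and $\pi$ maps it into $\mathrm{Frac}(A)$. The key point is that $\mathrm{Frac}(A)=\LL$: a generator $B$ of $\LL$ based at an arbitrary $h$ equals $M_{1}/M_{2}$ in the commutative field $\FF(x_{h,g}^{(i)})$, where $M_{1}=x_{e,h}^{(1)}\,B\,x_{h,e}^{(1)}$ and $M_{2}=x_{e,h}^{(1)}x_{h,e}^{(1)}$ are both generators based at $e$ (they record the closed walks $e\to h\to\cdots\to h\to e$ and $e\to h\to e$). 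Hence $\LL\subseteq\mathrm{Frac}(A)$, and $A\subseteq\LL$ gives the reverse inclusion. Since $\pi\big((D_{G,k})_{e}\big)$ is a field squeezed between $A$ and $\mathrm{Frac}(A)=\LL$, it equals $\LL$, and injectivity of $\pi$ yields $(D_{G,k})_{e}\cong\LL$.

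For the center I would invoke the crossed-product description recalled above, by which $Z(D_{G,k})=(D_{G,k})_{e}^{G}$, the $G$-action being conjugation by the homogeneous units $X_{g,1}$, equivalently by $P_{g}$. A short computation shows this acts on a diagonal matrix $\sum_{h}d_{h}E_{h,h}$ by $(g\cdot d)_{h}=d_{hg}$, so via $\pi$ it reads $\pi(g\cdot d)=d_{g}$; and since the $(g,g)$-entry of a generic product arises from its $(e,e)$-entry by replacing each $x_{a,b}^{(i)}$ with $x_{ga,gb}^{(i)}$, the transported action on $\LL$ is exactly $\sigma(x_{h,g}^{(i)})=x_{\sigma h,\sigma g}^{(i)}$. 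Combining, $Z(D_{G,k})\cong\LL^{G}$ for this action. The main obstacle I anticipate is surjectivity of $\pi$ onto all of $\LL$, i.e. verifying that inverting only the \emph{central} elements already recovers every generator based at $h\neq e$; this is precisely what the field property of $(D_{G,k})_{e}$ together with the $M_{1}/M_{2}$ identity resolves, while the product formula, the linear independence, and the action computation are routine.
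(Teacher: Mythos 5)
Your computation of the products, the $(e,e)$-projection, the identification of the $G$-action, and the observation that a generator of $\LL$ based at $h$ equals $M_{1}/M_{2}$ for two generators based at $e$ (a detail the paper glosses over when it asserts that $\LL$ is the fraction field of $\varphi((R_{G,k})_{e})$) all match the paper's argument and are sound. But there is one genuine gap: you write ``granting, as is standard for generic crossed products, that $D_{G,k}$ is a $G$-graded division algebra,'' and everything downstream (that $(D_{G,k})_{e}$ is a field, hence is squeezed between $A$ and $\mathrm{Frac}(A)$ and so equals $\LL$) rests on this. That statement cannot be granted here: $D_{G,k}$ is \emph{defined} as the localization of $R_{G,k}$ at its nonzero central elements, and showing that $(D_{G,k})_{e}$ is a field is precisely the nontrivial content of the theorem. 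The paper proves it by noting that every nonzero homogeneous element of $D_{G,k}$ is invertible in $M_{|G|}(\FF(x_{h,g}^{(i)}))$, so $D_{G,k}$ is graded prime; a graded analogue of Posner's theorem (cited from Balaba) then makes it graded simple over its center, whence $D_{G,k}\otimes_{Z(D_{G,k})}\FF(x_{h,g}^{(i)})\cong M_{n}(\FF(x_{h,g}^{(i)}))$ and $\dim_{Z(D_{G,k})}(D_{G,k})_{e}=n<\infty$; a domain finite-dimensional over a central field is a field. Without this (or an equivalent citation with the localization genuinely matching the one in the definition of $D_{G,k}$), your proof only shows $A\subseteq\pi((D_{G,k})_{e})\subseteq\mathrm{Frac}(A)=\LL$, not the claimed equality.

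A secondary, repairable flaw: your justification of injectivity of $\pi$ via ``distinct words yield distinct monomials in the $(e,e)$-slot'' is false, because the $(e,e)$-entry lives in a commutative ring. For $G=C_{2}$ the words $((\sigma,1),(\sigma,1),(\sigma,2),(\sigma,2))$ and $((\sigma,1),(\sigma,2),(\sigma,2),(\sigma,1))$ produce the same commutative monomial $x_{e,\sigma}^{(1)}x_{\sigma,e}^{(1)}x_{e,\sigma}^{(2)}x_{\sigma,e}^{(2)}$. Injectivity is nevertheless true, and the correct reason is already implicit in your product formula: for every $A\in(R_{G,k})_{e}$ one has $A_{(h,h)}=h(A_{(e,e)})$ under the substitution $x_{a,b}^{(i)}\mapsto x_{ha,hb}^{(i)}$, so the $(e,e)$-entry determines the whole (diagonal) matrix. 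This is exactly how the paper argues, and it also gives you for free that $(R_{G,k})_{e}$ and $Z(R_{G,k})$ are integral domains.
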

\begin{proof}
In order to study the center $Z(R_{G,k})$, note first that these
elements are scalar matrices, and in particular homogeneous of degree
$e$ (or equivalently diagonal). If $A=\prod X_{g_{i},j_{i}}$ is
a monomial with $\prod g_{i}=e$, then 
\begin{eqnarray*}
A_{(h,h)} & = & x_{h,hg_{1}}^{(j_{1})}x_{hg_{1},hg_{1}g_{2}}^{(j_{2})}x_{hg_{1}g_{2},hg_{1}g_{2}g_{3}}^{(j_{3})}\cdots x_{h\prod_{1}^{m-1}g_{i},h\prod_{1}^{m}g_{i}}^{(j_{m})}\\
 & = & h\left(x_{e,g_{1}}^{(j_{1})}x_{g_{1},g_{1}g_{2}}^{(j_{2})}x_{g_{1}g_{2},g_{1}g_{2}g_{3}}^{(j_{3})}\cdots x_{\prod_{1}^{m-1}g_{i},\prod_{1}^{m}g_{i}}^{(j_{m})}\right)=h(A_{(e,e)})
\end{eqnarray*}
for any $h\in H$. Since $(R_{G,k})_{e}$ is spanned by such monomials,
it follows that $A_{(h,h)}=h(A_{(e,e)})$ for any $A\in\left(R_{G,k}\right)_{e}$.
Consequently, if $A\in\left(R_{G,k}\right)_{e}$ is nonzero, then
all its entries on the diagonal are nonzero, and in particular $\left(R_{G,k}\right)_{e}$
and $Z(R_{G,k})$ are integral domains.

Define $\varphi:\left(R_{G,k}\right)_{e}\to\FF(x_{h,g}^{(i)})$ by
sending a diagonal matrix to its $(e,e)$ entry. By the preceding
argument this map is injective and contains all the elements of the
form $x_{e,g_{1}}^{(j_{1})}x_{g_{1},g_{1}g_{2}}^{(j_{2})}x_{g_{1}g_{2},g_{1}g_{2}g_{3}}^{(j_{3})}\cdots x_{\prod_{1}^{m-1}g_{i},\prod_{1}^{m}g_{i}}^{(j_{m})}$
and note that $\LL$ is exactly the fraction field of $\varphi\left(\left(R_{G,k}\right)_{e}\right)$.
Additionally an element is central in $R_{G,k}$ if and only if its
image under $\varphi$ is $G$-invariant. 

Extend $\varphi$ to $\left(D_{G,k}\right)_{e}$ in the natural way.
By definition $\varphi\left((R_{G,k})_{e}\right)\subseteq\varphi\left((D_{G,k})_{e}\right)\subseteq\LL$,
hence the theorem will be proven if we can show that $\left(D_{G,k}\right)_{e}$
is a field. On the other hand, $Z(D_{G,k})$ is a field, so it is
enough to show that $\left(D_{G,k}\right)_{e}$ is finite dimensional
over it (since it is an integral domain).

The same argument as in the beginning of the proof shows that all
the nonzero homogeneous elements in $D_{G,k}$ are invertible in $M_{\left|G\right|}(\FF(x_{h,g}^{(i)}))$.
Recall that a graded algebra $B$ is called \emph{graded prime} if
for any homogeneous elements $b_{1},b_{2}\in B$ we have $b_{1}Bb_{2}=0$
if and only if $b_{1}=0$ or $b_{2}=0$. In particular $D_{G,k}$
is graded prime and by a graded analog of Posner's theorem given in
\cite{balaba_graded_2005}, it is graded simple over its center. In
particular $D_{G,k}\otimes_{Z(D_{G,k})}\FF(x_{h,g}^{(i)})$ is graded
simple and therefore the homomorphism $D_{G,k}\otimes_{Z(D_{G,k})}\FF(x_{h,g}^{(i)})\to M_{n}(\FF(x_{h,g}^{(i)}))$
is injective and can be easily seen to be an isomorphism. It follows
that $\dim_{Z(D_{G,k})}(\left(D_{G,k}\right)_{e})=\dim_{\FF(x_{h,g}^{(i)})}(\left(M_{n}(\FF(x_{h,g}^{(i)}))\right)_{e})=n$
is finite, hence $\left(D_{G,k}\right)_{e}$ is indeed a field, and
therefore isomorphic to $\LL$.
\end{proof}
Note that the elements generating the field $\LL$ in the last theorem
correspond to ``cycles'' on a Cayley graph for the group $G$. In
\secref{Flows-in-Graphs} this notion will be formalized and generalized
to other graphs, but first we recall some definition and results required
for this study.

\section{\label{sec:Preliminaries}$G$-lattices and field invariants}

Fix a finite group $G$ and a field $\FF$ of characteristic zero
with a $G$-action (possibly trivial). In this section we recall the
basic definitions and results regarding $G$-lattices and their corresponding
field invariants needed for this paper. Further details and proofs
can be found in \cite{lorenz_multiplicative_2005} and \cite{le_bruyn_permutation_????}.
\begin{defn}
Let $\LL/\FF$ be a finitely generated field extension. Then:
\begin{itemize}
\item $\LL$ is called a \emph{rational extension} of $\FF$ if $\LL=\FF\left(x_{1},...,x_{n}\right)$
for some algebraically independent indeterminates $\left\{ x_{1},...,x_{n}\right\} $.
\item $\LL$ is called a \emph{stably rational extension} of $\FF$ if $\LL(y_{1},...,y_{m})$
is rational over $\FF$ for some $\left\{ y_{1},...,y_{m}\right\} $
algebraically independent over $\LL$.
\item $\LL$ is called \emph{retract rational extension} of $\FF$ if $\LL$
is the fraction field of some $\FF$-algebra $A$, and there are homomorphisms
$\iota:A\to\FF\left[x_{1},...,x_{n}\right]\left[s^{-1}\right]$ and
$\pi:\FF\left[x_{1},...,x_{n}\right]\left[s^{-1}\right]\to A$ such
that $\pi\circ\iota=id_{A}$.
\item $\LL$ is called a \emph{unirational extension} of $\FF$ if $\LL$
can be embedded in a rational extension of $\FF$.
\end{itemize}
\end{defn}
We always have 
\[
\mbox{rational}\subseteq\mbox{stably rational}\subseteq\mbox{retract rational}\subseteq\mbox{unirational}.
\]

We remark here that the inclusions above are all proper, although
the examples are not trivial.
\begin{defn}
Two finitely generated field extensions $\LL_{1},\LL_{2}/\FF$ are
called \emph{stably isomorphic} (over $\FF$) if there are $x_{1},...,x_{m}$
and $y_{1},...,y_{k}$ algebraically independent over $\LL_{1},\LL_{2}$
respectively such that $\LL_{1}\left(x_{1},...,x_{m}\right)\cong\LL_{2}(y_{1},...,y_{k})$
as $\FF$-algebras. In this case we write $\LL_{1}\approx_{\FF}\LL_{2}$
or just $\LL_{1}\approx\LL_{2}$ if there is no ambiguity.
\end{defn}
A \emph{$G$-lattice} M is a $G$-module which is finitely generated
as an abelian group, namely $M\cong\ZZ^{n}$ for some $n\in\NN$.
We denote by $\FF\left[M\right]$ the group algebra of $M$ which
is isomorphic to the algebra of Laurent polynomials in $rank(M)=n$
variables. The $G$-action on $M$ and $\FF$ induce a $G$-action
on $\FF\left[M\right]$ by $g(\alpha x^{m})=g(\alpha)x^{g(m)}$ for
any $g\in G$, $\alpha\in\FF$ and $m\in M$. This action is extended
naturally to the field of fractions $\FF\left(M\right)$ of $\FF\left[M\right]$.
The main question is to find out how close is the extension $\FF(M)^{G}/\FF^{G}$
to being rational.

A $G$-lattice $M$ is called a \emph{permutation lattice} if it has
a $G$-stable $\ZZ$ basis $X$, in which case we write $M=\ZZ X$.
Since each $G$-set $X$ is a disjoint union of sets of the form $\nicefrac{G}{H}$
for some subgroup $H\leq G$, it follows that $\ZZ X\cong\bigoplus_{1}^{t}\ZZ\nicefrac{G}{H_{i}}$.
Notice that if $X=\left\{ x_{1},...,x_{n}\right\} $, then $\FF\left(\ZZ X\right)=\FF\left(y_{x_{1}},...,y_{x_{n}}\right)$
where the $y_{x_{i}}$ are algebraically independent over $\FF$ and
$g(y_{x})=y_{g(x)}$ for any $x\in X$. 

The importance of permutation lattices is given in the next two results.
\begin{prop}[Masuda]
\label{prop:Masuda} Let $\nicefrac{\FF}{\FF^{G}}$ be a $G$-Galois
extension of fields and $P$ a permutation lattice. Then $\FF(P)^{G}$
is a rational extension of $\FF^{G}$.\end{prop}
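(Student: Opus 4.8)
The plan is to produce an explicit transcendence basis of $\FF(P)^{G}$ over $\FF^{G}$ by trading the natural permutation coordinates for $G$-invariant ones, via Galois descent for semilinear representations (Speiser's lemma, i.e.\ the additive analogue of Hilbert~90). Write $P=\ZZ X$ for a $G$-set $X=\{x_{1},\dots,x_{N}\}$, so that by the description recalled just before the statement we have $\FF(P)=\FF(y_{x_{1}},\dots,y_{x_{N}})$ with the $y_{x_{i}}$ algebraically independent over $\FF$ and $g(y_{x})=y_{g(x)}$. First I would observe that the $\FF$-subspace $V=\bigoplus_{i}\FF\,y_{x_{i}}\subseteq\FF(P)$ is $G$-stable, and that here $G$ acts on $V$ \emph{semilinearly}, i.e.\ $g(\alpha v)=g(\alpha)\,g(v)$ for $\alpha\in\FF$ and $v\in V$, since $G$ acts nontrivially on the coefficient field $\FF$.

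The key step is Speiser's lemma: because $\FF/\FF^{G}$ is a $G$-Galois extension, every finite-dimensional semilinear $G$-representation descends to $\FF^{G}$, so $V$ admits an $\FF$-basis $\{z_{1},\dots,z_{N}\}$ consisting of $G$-invariant vectors, with $V=\FF\otimes_{\FF^{G}}V^{G}$ and $\dim_{\FF^{G}}V^{G}=N$. When $X=G$ this is exactly the normal basis theorem; the general case follows either by applying it orbit-by-orbit to each summand $\ZZ(G/H_{i})$ of $P$, or directly to $V$. Each $z_{j}=\sum_{i}c_{ij}\,y_{x_{i}}$ with $(c_{ij})$ an invertible matrix over $\FF$, so this invertible linear change of coordinates yields $\FF(P)=\FF(z_{1},\dots,z_{N})$, where the $z_{j}$ are again algebraically independent over $\FF$ (and hence over $\FF^{G}$).

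Finally, since each $z_{j}$ is $G$-fixed and transcendental, the $G$-action on $\FF(z_{1},\dots,z_{N})=\FF\otimes_{\FF^{G}}\FF^{G}(z_{1},\dots,z_{N})$ affects only the coefficient field $\FF$. Hence $\FF(z_{1},\dots,z_{N})/\FF^{G}(z_{1},\dots,z_{N})$ is obtained from $\FF/\FF^{G}$ by base change along the domain $\FF^{G}[z_{1},\dots,z_{N}]$, is again $G$-Galois, and has fixed field $\FF(P)^{G}=\FF^{G}(z_{1},\dots,z_{N})$, which is rational over $\FF^{G}$. I expect the only real content to be Speiser's lemma supplying the invariant basis; the remaining points --- semilinearity of the action on $V$, preservation of algebraic independence under an invertible linear substitution, and the descent of the fixed field through the base change --- should be routine once the invariant coordinates are in hand.
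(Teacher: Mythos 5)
Your argument is correct: the semilinear action on $V=\bigoplus_i\FF\,y_{x_i}$, the invariant basis supplied by Speiser's lemma (Galois descent, reducing to the normal basis theorem when $X=G$), and the identification $\FF(z_1,\dots,z_N)^G=\FF^G(z_1,\dots,z_N)$ via linear disjointness are all sound. The paper itself gives no proof of \propref{Masuda} --- it defers to the references on $G$-lattices and field invariants --- and the standard proof found there is exactly this Galois-descent argument, so your proposal matches the intended one.
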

\begin{thm}
\label{thm:reduction_by_permutation}Let $\xymatrix{0\ar[r] & M_{1}\ar[r] & M_{2}\ar[r] & P\ar[r] & 0}
$ be an exact sequence of $G$-lattices where $P$ is a permutation
lattice. Then $\FF(M_{2})^{G}$ is rational over $\FF(M_{1})^{G}$
if one of the following conditions is satisfied:
\begin{enumerate}
\item $M_{1}$ is a faithful $G$-lattice and $\FF$ has a trivial $G$-action. 
\item $\FF/\FF^{G}$ is a $G$-Galois extension of fields.
\end{enumerate}
\end{thm}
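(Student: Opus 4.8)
The plan is to translate the short exact sequence of lattices into an explicit description of the field extension $\FF(M_2)/\FF(M_1)$ and then apply Galois descent. Since $P$ is a permutation lattice it is free as an abelian group, so the sequence $0 \to M_1 \to M_2 \to P \to 0$ splits over $\ZZ$. Fixing a $G$-stable $\ZZ$-basis $\{p_1,\dots,p_r\}$ of $P$ and lifting it to elements $\tilde p_1,\dots,\tilde p_r \in M_2$, I obtain $M_2 = M_1 \oplus \bigoplus_i \ZZ\tilde p_i$ as abelian groups, and hence a purely transcendental description $\FF(M_2) = L(x_1,\dots,x_r)$, where $L := \FF(M_1)$ and $x_i$ is the group-algebra monomial attached to $\tilde p_i$. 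Because the basis of $P$ is $G$-stable, for each $g\in G$ we have $g(\tilde p_i) = \tilde p_{g(i)} + m_i^g$ with $m_i^g \in M_1$, which translates into the twisted permutation action
\[
g(x_i) = \mu_i^g\, x_{g(i)}, \qquad \mu_i^g \in L^{\times},
\]
where $\mu_i^g$ is the monomial corresponding to $m_i^g$ and $G$ acts on $L$ through its action on $\FF(M_1)$.

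The first thing I would check is that both hypotheses force $L/L^G$ to be a $G$-Galois extension. In case (2) this is immediate, since $G$ already acts faithfully on $\FF \subseteq L$; in case (1) faithfulness of $M_1$ makes $G$ act faithfully on $\FF[M_1]$ and hence on $L$. By Artin's theorem $L/L^G$ is then Galois with group $G$. Next I would linearize: regard $V := \bigoplus_i L\,x_i$ as an $r$-dimensional $L$-vector space carrying the semilinear $G$-action $g(\lambda x_i) = g(\lambda)\,\mu_i^g\, x_{g(i)}$. A short computation shows the compatibility $\mu_i^{gh} = g(\mu_i^h)\,\mu_{h(i)}^g$ holds automatically (it merely records that the $G$-action on $M_2$ is a genuine group action), so this semilinear action is well defined.

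Now I would invoke Speiser's theorem, the $\mathrm{GL}_r$ form of Hilbert~90: for a $G$-Galois extension $L/L^G$, any finite-dimensional $L$-vector space with semilinear $G$-action descends, i.e.\ the natural map $L \otimes_{L^G} V^G \to V$ is an isomorphism. This yields $G$-invariant elements $w_1,\dots,w_r \in V$ that simultaneously form an $L$-basis of $V$. Writing $w_j = \sum_i c_{ij}x_i$ with $(c_{ij}) \in \mathrm{GL}_r(L)$, invertibility over $L$ gives $L(x_1,\dots,x_r) = L(w_1,\dots,w_r)$, so $\FF(M_2) = L(w_1,\dots,w_r)$ with the $w_j$ algebraically independent over $L$ and $G$-fixed. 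Finally, since the $w_j$ are transcendental over $L$ and $G$-invariant, $L$ is linearly disjoint from $L^G(w_1,\dots,w_r)$ over $L^G$, and the Galois correspondence for the $G$-Galois extension $L(w_1,\dots,w_r)/L^G(w_1,\dots,w_r)$ identifies
\[
\FF(M_2)^G = L^G(w_1,\dots,w_r) = \FF(M_1)^G(w_1,\dots,w_r),
\]
a rational extension of $\FF(M_1)^G$ of transcendence degree $\mathrm{rank}(P)$.

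I expect the main obstacle to be the descent step and its bookkeeping rather than any single hard computation: one must reinterpret the multiplicative, monomial action on the generators $x_i$ as an honest semilinear action on the vector space they span, verify the cocycle identity so that Speiser's theorem applies, and then check that passing to the invariant basis $w_j$ preserves the field generated. The faithfulness verification in case (1) and the linear-disjointness argument in the last step are routine but must be stated carefully, since they are precisely where the two hypotheses and the permutation assumption enter.
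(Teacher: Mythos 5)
Your proof is correct. The paper does not prove this theorem at all --- it is recalled as a known result with a pointer to Lorenz's book and Le Bruyn's notes --- and the argument you give (split the sequence over $\ZZ$ using freeness of $P$, observe that either hypothesis makes $\FF(M_1)/\FF(M_1)^G$ a $G$-Galois extension, linearize the monomial action on the lifted generators into a semilinear action, and apply Speiser's $\mathrm{GL}_r$ form of Hilbert~90 to produce an invariant transcendence basis) is exactly the standard proof found in those references, with the cocycle verification and the linear-disjointness step handled correctly.
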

\begin{rem}
\label{rem:Noether-Problem}Noether's Problem asks if given a field
$\FF$ with trivial $G$-action and a faithful permutation lattice
$M$, how close is $\FF(M)^{G}/\FF$ to be a rational extension. This
problem was studied extensively in the literature and was solved for
many families of groups. I particular, a positive solution for this
problem implies a positive solution to the inverse Galois problem.
For example, the rationality of $\FF\left(\ZZ\nicefrac{S_{n}}{S_{n-1}}\right)^{S_{n}}/\FF$
follows from the fundamental theorem of symmetric polynomials and
was used by Hilbert to show that $S_{n}$ is realizable as a Galois
extension over $\QQ$. Noether's problem was also solved completely
for abelian groups by Lenstra in \cite{lenstra_rational_1974} for
any field $\FF$. For the case where $\FF$ contains a root of unity
of the order of the exponent of $G$ and $G$ is abelian, Fischer
showed that $\FF(\ZZ G)^{G}/\FF$ is always rational \cite{fischer_isomorphie_1915}.
A generalization of this result where the action of $G$ on $\FF$
is not trivial is given in \lemref{Fischer-generalize}. Many cases
appearing in this paper are answered by a reduction to Noether's Problem.
\end{rem}
A $G$-lattice $M$ is called \emph{quasi-permutation }if there is
an exact sequence $\xymatrix{0\ar[r] & M\ar[r] & Q\ar[r] & P\ar[r] & 0}
$ where $P$ and $Q$ are permutation lattices. In particular, under
the conditions of the last theorem on $M$ and $\FF$, we get that
$\FF(M)^{G}\approx\FF(\ZZ G)^{G}$.

One trivial way to find an exact sequence as in the theorem above
is if $M_{2}\cong M_{1}\oplus P$. With this in mind we say that two
lattice $M_{1},M_{2}$ are called \emph{permutation equivalent }or
just \emph{equivalent} if there are permutation lattices $P_{1},P_{2}$
such that $M_{1}\oplus P_{1}\cong M_{2}\oplus P_{2}$. In this case
we write $M_{1}\sim M_{2}$ and denote the equivalence class by $\left[M_{1}\right]$.
It follows, for example, that if $M_{1},M_{2}$ are faithful $G$-lattices,
$\FF$ has a trivial $G$-action and $M_{1}\sim M_{2}$, then $\FF(M_{1})^{G}\approx\FF(M_{2})^{G}$. 

The set of $G$-lattices modulo the relation $\sim$ has the structure
of an abelian semi group with the action $\left[M_{1}\right]+\left[M_{2}\right]=\left[M_{1}\oplus M_{2}\right]$.
A lattice $M$ is called \emph{invertible }or \emph{permutation projective}
if $\left[M\right]$ is invertible in this semigroup, or equivalently
$M$ is a direct summand of a permutation lattice.

For a subgroup $\tilde{G}\leq G$, we denote by $\hat{H}^{i}(\tilde{G},M)$
the $i$-th Tate cohomology group. Recall that any short exact sequence
$0\to A\to B\to C\to0$ defines a long exact series 
\[
\xymatrix{\cdots\ar[r] & \hat{H}^{-1}(\tilde{G},C)\ar[r] & \hat{H}^{0}(\tilde{G},A)\ar[r] & \hat{H}^{0}(\tilde{G},B)\ar[r] & \hat{H}^{0}(\tilde{G},C)\ar[r] & \hat{H}^{1}(\tilde{G},A)\ar[r] & \cdots}
.
\]

A lattice $M$ is called \emph{flasque} or \emph{$\hat{H}^{-1}$-trivial
}(resp. \emph{coflasque} or \emph{$\hat{H}^{1}$-trivial })\emph{
if} $\hat{H}^{-1}(\tilde{G},M)=0$ for any subgroup $\tilde{G}\leq G$
(resp. $\hat{H}^{1}(\tilde{G},M)=0$). Any permutation lattice, and
therefore any invertible lattice, is flasque and coflasque, though
the converse is not necessarily true. 

It is well known that any surjection $M\to P$ on a projective module
$P$ is split. The following is the analog for invertible modules.
\begin{lem}
\label{lem:splitting_invertibles}Let 
\[
\xymatrix{0\ar[r] & C\ar[r] & M_{1}\ar[r] & I\ar[r] & 0\\
0\ar[r] & I\ar[r] & M_{2}\ar[r] & F\ar[r] & 0
}
\]
be exact sequences where $I$ is invertible, $C$ is coflasque and
$F$ is flasque. Then these sequences split.
\end{lem}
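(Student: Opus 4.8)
The plan is to translate the splitting of each sequence into the vanishing of an appropriate $\mathrm{Ext}^{1}$ group and then to compute these groups. Recall that a short exact sequence $0\to A\to B\to D\to 0$ of $G$-lattices splits if and only if its class in $\mathrm{Ext}^{1}_{\ZZ G}(D,A)$ is zero. Thus the first sequence splits as soon as $\mathrm{Ext}^{1}_{\ZZ G}(I,C)=0$, and the second splits as soon as $\mathrm{Ext}^{1}_{\ZZ G}(F,I)=0$. I would therefore aim to prove both of these vanishing statements outright, which is slightly stronger than what the lemma requires.

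I would first establish the key computation: $\mathrm{Ext}^{1}_{\ZZ G}(I,C)=0$ whenever $I$ is invertible and $C$ is coflasque. Since $I$ is invertible it is a direct summand of a permutation lattice, say $P=I\oplus I'$, and $\mathrm{Ext}^{1}$ is additive in its first variable, so it suffices to treat $I=P$; decomposing $P\cong\bigoplus_{i}\ZZ[G/H_{i}]$ reduces matters further to showing $\mathrm{Ext}^{1}_{\ZZ G}(\ZZ[G/H],C)=0$ for each subgroup $H\le G$. Writing $\ZZ[G/H]=\mathrm{Ind}_{H}^{G}\ZZ$ and invoking the Eckmann--Shapiro adjunction gives $\mathrm{Ext}^{1}_{\ZZ G}(\ZZ[G/H],C)\cong\mathrm{Ext}^{1}_{\ZZ H}(\ZZ,\mathrm{Res}_{H}C)=H^{1}(H,C)$. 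In positive degree ordinary and Tate cohomology agree, so $H^{1}(H,C)=\hat{H}^{1}(H,C)$, and this vanishes because $C$ is coflasque (the definition quantifies over all subgroups, in particular over $H$). This proves that the first sequence splits.

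For the second sequence I would deduce $\mathrm{Ext}^{1}_{\ZZ G}(F,I)=0$ from the first computation by $\ZZ$-duality. The functor $M\mapsto M^{*}=\mathrm{Hom}_{\ZZ}(M,\ZZ)$ is an exact contravariant self-inverse functor on the category of $G$-lattices, hence induces an isomorphism $\mathrm{Ext}^{1}_{\ZZ G}(F,I)\cong\mathrm{Ext}^{1}_{\ZZ G}(I^{*},F^{*})$. Duality carries permutation lattices to permutation lattices and is additive, so $I^{*}$ is again invertible; and the duality $\hat{H}^{j}(\tilde{G},M)\cong\hat{H}^{-j}(\tilde{G},M^{*})^{\vee}$, valid for every subgroup $\tilde{G}\le G$, shows (taking $j=-1$) that $F$ being flasque, i.e.\ $\hat{H}^{-1}(\tilde{G},F)=0$, is equivalent to $\hat{H}^{1}(\tilde{G},F^{*})=0$, i.e.\ $F^{*}$ being coflasque. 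Thus $\mathrm{Ext}^{1}_{\ZZ G}(I^{*},F^{*})$ falls under the previous computation and vanishes, so the second sequence splits as well.

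The routine ingredients---additivity of $\mathrm{Ext}^{1}$, the Eckmann--Shapiro isomorphism, and the agreement of $H^{1}$ with $\hat{H}^{1}$---are standard, and the genuinely substantive point is the interplay recorded in the third paragraph: that $\ZZ$-duality simultaneously preserves invertibility and exchanges the flasque and coflasque conditions, so that the two sequences are dual instances of a single vanishing statement. I expect the main care to be needed precisely in pinning down this duality with the correct cohomological degrees (so that $\hat{H}^{-1}$ matches $\hat{H}^{1}$ on the dual lattice rather than some other shift). Alternatively, one can avoid duality entirely and treat the second sequence directly, using $\ZZ[G/H]=\mathrm{Coind}_{H}^{G}\ZZ$ together with the other Shapiro adjunction $\mathrm{Ext}^{1}_{\ZZ G}(F,\ZZ[G/H])\cong\mathrm{Ext}^{1}_{\ZZ H}(\mathrm{Res}_{H}F,\ZZ)$, reducing the claim to the vanishing of the Tate group $\hat{H}^{-1}(H,F)$, which holds because $F$ is flasque.
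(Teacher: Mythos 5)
Your argument is correct and complete. Note that the paper itself gives no proof of this lemma: it is stated in \secref{Preliminaries} as one of the standard facts recalled from the literature on $G$-lattices (the references cited there, e.g.\ Lorenz and Colliot-Th\'el\`ene--Sansuc, contain essentially the proof you wrote). Your route --- reducing splitting to the vanishing of $\mathrm{Ext}^{1}_{\ZZ G}(I,C)$ and $\mathrm{Ext}^{1}_{\ZZ G}(F,I)$, computing the first via additivity, Eckmann--Shapiro and coflasqueness, and obtaining the second from the first by the exact $\ZZ$-duality that preserves invertibility and exchanges flasque with coflasque (exactly the duality statement $\hat{H}^{i}(\tilde{G},M)\cong\hat{H}^{-i}(\tilde{G},M^{*})$ recorded in the paper) --- is the standard one, and all the intermediate identifications you invoke are valid for lattices over a finite group.
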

A \emph{flasque resolution} of $M$ is a short exact sequence of the
form
\[
\xymatrix{0\ar[r] & M\ar[r] & P\ar[r] & F\ar[r] & 0}
\]
where $P$ is a permutation lattice and $F$ is flasque. Similarly
a \emph{coflasque resolution of $M$ }is a short exact sequence of
the form
\[
\xymatrix{0\ar[r] & C\ar[r] & P\ar[r] & M\ar[r] & 0}
\]
where $P$ is a permutation lattice and $C$ is coflasque. Note in
particular that a flasque and coflasque resolutions of an invertible
lattice split.

Denote by $M^{*}$ the \emph{dual of $M$}, namely $M^{*}=Hom(M,\ZZ)$.
It has a natural $G$-structure making it into a $G$-lattice. It
follows from the Tate duality that $\hat{H}^{i}(\tilde{G},M)\cong\hat{H}^{-i}(\tilde{G},M^{*})$
for any lattice $M$, subgroup $\tilde{G}\leq G$ and an integer $i\in\ZZ$,
so in particular $M$ is flasque if and only if $M^{*}$ is coflasque.
Since any permutation lattice is self dual, it follows that the dual
of a flasque resolution of $M$ is a coflasque resolution of $M^{*}$
and vice versa.

It is well known that any lattice $M$ admits a coflasque (and therefore
a flasque) resolution. Moreover, the lattices $F$ and $C$ appearing
in the sequences above are unique up to the relation $\sim$ (which
is the analog of Schanuel's lemma).
\begin{lem}
Let $M_{1},M_{2}$ be lattices and let $F_{1},F_{2}$ be flasque lattices
appearing in flasque resolutions of $M_{1}$ and $M_{2}$ respectively.
If $M_{1}\sim M_{2}$, then $F_{1}\sim F_{2}$. 
\end{lem}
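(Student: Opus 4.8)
The plan is to reduce to the case $M_1\cong M_2$ and then establish a Schanuel-type statement: any two flasque lattices arising in flasque resolutions of a \emph{single} lattice $N$ are permutation equivalent. For the reduction, I would use the hypothesis $M_1\sim M_2$ directly: there are permutation lattices $Q_1,Q_2$ with $M_1\oplus Q_1\cong M_2\oplus Q_2=:N$. Given a flasque resolution $0\to M_1\to P_1\to F_1\to 0$, I would take its direct sum with the identity sequence on $Q_1$ to obtain $0\to M_1\oplus Q_1\to P_1\oplus Q_1\to F_1\to 0$. Since $P_1\oplus Q_1$ is again a permutation lattice and $F_1$ is unchanged, this is a flasque resolution of $N$ with flasque part $F_1$; the same trick applied to the resolution of $M_2$ produces a flasque resolution of $N$ with flasque part $F_2$. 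Thus it suffices to compare two flasque resolutions of the \emph{same} lattice.

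For the Schanuel step, suppose $0\to N\xrightarrow{\iota_1} P_1\to F_1\to 0$ and $0\to N\xrightarrow{\iota_2} P_2\to F_2\to 0$ are flasque resolutions. I would form the pushout $E$ of $P_1\leftarrow N\rightarrow P_2$, realized as the cokernel of the map $N\xrightarrow{(\iota_1,-\iota_2)} P_1\oplus P_2$. Because $\iota_1$ and $\iota_2$ are injective, the induced maps $P_1\to E$ and $P_2\to E$ are injective, and a short diagram chase identifies their cokernels as $P_2/N\cong F_2$ and $P_1/N\cong F_1$ respectively. Since $E$ is then an extension of a lattice by a lattice, it is finitely generated and torsion-free, hence a genuine $G$-lattice. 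This yields two short exact sequences $0\to P_1\to E\to F_2\to 0$ and $0\to P_2\to E\to F_1\to 0$.

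At this point \lemref{splitting_invertibles} finishes the argument. In each sequence the submodule ($P_1$, resp.\ $P_2$) is a permutation lattice and hence invertible, while the quotient ($F_2$, resp.\ $F_1$) is flasque, so both sequences fall under the second case of the lemma and therefore split. This gives $E\cong P_1\oplus F_2$ and $E\cong P_2\oplus F_1$, and comparing the two decompositions yields $P_1\oplus F_2\cong P_2\oplus F_1$, i.e.\ $F_1\sim F_2$, as desired.

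I expect the only delicate point to be the pushout bookkeeping in the Schanuel step: checking the sign in the map $(\iota_1,-\iota_2)$, verifying that the two induced inclusions are injective with exactly the cokernels $F_1$ and $F_2$, and confirming that $E$ is torsion-free so that it is a legitimate $G$-lattice to which \lemref{splitting_invertibles} can be applied. Once these identifications are in place the splitting is immediate, and the reduction from $M_1\sim M_2$ to $M_1\cong M_2$ is routine.
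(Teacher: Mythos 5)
Your proof is correct and is precisely the Schanuel-type argument the paper has in mind (the lemma is stated there without proof, described only as ``the analog of Schanuel's lemma'' with details left to the cited references): the reduction to a single lattice by adding permutation summands, the pushout $E=\mathrm{coker}\bigl(N\to P_{1}\oplus P_{2}\bigr)$ with its two exact sequences, and the appeal to \lemref{splitting_invertibles} all check out. The delicate points you flag -- injectivity of the induced inclusions, identification of the cokernels, and torsion-freeness of $E$ -- are all handled correctly.
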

Following this theorem, we write $\left[M\right]^{fl}$ to be the
equivalence class $\left[F\right]$ where $F$ is a flasque lattice
appearing in a flasque resolution of $M$. Similarly we write $\left[M\right]^{cofl}$
for coflasque resolutions.

The next result generalizes \thmref{reduction_by_permutation}.
\begin{thm}
\label{thm:reduction_by_flasque}Let $M_{1},M_{2}$ be $G$-lattices
such that $\left[M_{1}\right]^{fl}=\left[M_{2}\right]^{fl}$. Then
$\FF(M_{2})^{G}\approx\FF(M_{2})^{G}$ if one of the following conditions
is satisfied:
\begin{enumerate}
\item $M_{1}$ and $M_{2}$ are faithful $G$-lattices and $\FF$ has a
trivial $G$-action. 
\item $\FF/\FF^{G}$ is a $G$-Galois extension of fields.
\end{enumerate}
\end{thm}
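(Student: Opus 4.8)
The plan is to deduce the stable isomorphism $\FF(M_1)^G\approx\FF(M_2)^G$ from the permutation reduction theorem \thmref{reduction_by_permutation} by building a single auxiliary lattice that lies over \emph{both} $M_1$ and $M_2$ with permutation quotients. First I would fix flasque resolutions
\[
0\to M_1\to P_1\to F_1\to 0,\qquad 0\to M_2\to P_2\to F_2\to 0,
\]
with $P_1,P_2$ permutation lattices and $F_1,F_2$ flasque. The hypothesis $[M_1]^{fl}=[M_2]^{fl}$ says exactly that $F_1\sim F_2$, so there are permutation lattices $Q_1,Q_2$ with $F_1\oplus Q_1\cong F_2\oplus Q_2$. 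Adding the split sequence $0\to 0\to Q_i\to Q_i\to 0$ to the $i$-th resolution enlarges $P_i$ to $P_i\oplus Q_i$ (still a permutation lattice) and $F_i$ to $F_i\oplus Q_i$, while leaving $M_i$ untouched. After this harmless adjustment I may assume $F_1=F_2=:F$.

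The heart of the construction is the pullback $N:=P_1\times_F P_2=\{(p_1,p_2)\in P_1\oplus P_2 : \bar p_1=\bar p_2 \text{ in } F\}$, which is a $G$-stable subgroup of the finitely generated free abelian group $P_1\oplus P_2$ and hence itself a $G$-lattice. The two coordinate projections $\pi_1,\pi_2$ then fit into short exact sequences
\[
0\to M_2\to N\xrightarrow{\;\pi_1\;} P_1\to 0,\qquad 0\to M_1\to N\xrightarrow{\;\pi_2\;} P_2\to 0,
\]
where surjectivity of $\pi_i$ follows from surjectivity of $P_{3-i}\to F$, and the kernels are $\ker\pi_1=\ker(P_2\to F)=M_2$ and $\ker\pi_2=M_1$. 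The decisive point is that here the \emph{cokernels} are the permutation lattices $P_1$ and $P_2$, so each sequence has exactly the form required by \thmref{reduction_by_permutation}.

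Applying that theorem to the two sequences then shows that $\FF(N)^G$ is rational over $\FF(M_2)^G$ and rational over $\FF(M_1)^G$; note that its hypotheses are imposed on the \emph{sub}lattices appearing as kernels, namely $M_2$ and $M_1$, both of which are faithful under condition (1) with $\FF$ carrying the trivial action, while condition (2) needs only that $\FF/\FF^G$ be Galois. Since ``rational over'' is a special case of stable isomorphism, I conclude $\FF(M_1)^G\approx\FF(N)^G\approx\FF(M_2)^G$ by transitivity of $\approx$, which is the assertion (with the evident reading $\FF(M_1)^G\approx\FF(M_2)^G$ of the displayed conclusion).

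I expect the only real obstacle to be bookkeeping rather than anything conceptual: checking that the pullback $N$ genuinely produces the two displayed exact sequences with permutation cokernels and correctly identified kernels, and verifying that the faithfulness (or Galois) hypothesis lands on the sublattices $M_1,M_2$ so that \thmref{reduction_by_permutation} may be invoked in both instances. It is worth remarking that the flasque property of $F$ is never actually used in this direction; all the argument consumes is the relation $F_1\sim F_2$ together with the fact that the $P_i$ are permutation lattices.
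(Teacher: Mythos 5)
Your argument is correct: the reduction to a common cokernel $F$ via $F_1\sim F_2$, the pullback $N=P_1\times_F P_2$ with its two exact sequences having permutation cokernels, and the double application of \thmref{reduction_by_permutation} (whose faithfulness/Galois hypothesis indeed lands on the kernels $M_1,M_2$) is exactly the standard proof of this result. The paper itself gives no proof — it recalls the theorem from the cited literature on $G$-lattices — and your pullback ("double fibration") argument is the one found there, so there is nothing to fault; your closing observation that flasqueness of $F$ is only needed to make the hypothesis $[M_1]^{fl}=[M_2]^{fl}$ yield $F_1\sim F_2$, and not in the rationality step itself, is also accurate.
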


\section{\label{sec:Flows-in-Graphs}Flows in Graphs}

For the rest of this paper, we fix a field $\FF$ of characteristic
zero with trivial $G$-action.

Let $X=(V,E)$ be a (nonempty) finite graph (not necessarily simple).
For a directed edge $e\in E$ we denote by $e_{S},e_{T}$ the source
and target of the edge. A flow on $X$ is a function $f:E\to\ZZ$
such that 
\[
\sum_{\substack{e\in E\\
e_{S}=v
}
}f(e)=\sum_{\substack{e\in E\\
e_{T}=v
}
}f(e)\quad\mbox{for all }v\in V.
\]
The set of all flows is denoted by $Fl(X)$ or $Fl(V,E)$, which is
of course an abelian group under the addition of functions. We will
usually consider only connected graphs, where connected means that
the underlying undirected graph is connected.

Another well known way to define this group comes from the homology
groups of $X$. Let $\ZZ^{E}$, $\ZZ^{V}$ be the set of all integer
valued functions on $E$, $V$ respectively and consider the following
exact sequence
\begin{equation}
\xymatrix{\ZZ^{E}\ar[r]^{\partial_{E}} & \ZZ^{V}\ar[r]^{\varepsilon_{V}} & \ZZ\ar[r] & 0}
\end{equation}
where 
\begin{eqnarray*}
\mbox{for all }f\in\ZZ^{E} & : & \partial_{E}(f)(v)={\displaystyle \sum_{e_{T}=v}}f(e)-\sum_{e_{S}=v}f(e)\\
\mbox{for all }h\in\ZZ^{V} & : & \varepsilon_{V}(h)=\sum_{v\in V}h(v).
\end{eqnarray*}
Let $I_{V}=\ker(\varepsilon_{V})$. It is easily seen that $Im(\partial_{E})\subseteq I_{V}$
and the equality follows from connectedness of the graph. Finally,
the group $Fl(X)$ is nothing more than $\ker(\partial_{E})$. Since
$Fl(X)$ is a subgroup of the f.g. free abelian group $\ZZ^{E}$,
it is a f.g. free abelian group, and from the exact sequence above
it has rank $\left|E\right|-\left|V\right|+1$.

Next we consider group actions on graphs.
\begin{defn}
Let $G$ be a finite group. We say that a $G$-action on $V,E$ is
a graph action if the action on $E$ and $V$ is compatible, namely
for every $g\in G$ and $e\in E$ we have that $g(e_{S})=g(e)_{S}$
and $g(e_{T})=g(e)_{T}$.
\end{defn}
The action of $G$ on $V,E$ induce a $G$-action on $\ZZ^{V},\ZZ^{E}$
respectively. Letting $\ZZ$ have a trivial $G$-action, we get that
the sequence above is a sequence of $G$-modules. The induced action
on $Fl(X)=\ker(\partial_{E})$ is the natural one, taking a flow $f$
and acting on it by the automorphisms of the graph. Finally, note
that there is a natural isomorphism between $\ZZ V,\;\ZZ E$ and their
duals $\ZZ^{V},\;\ZZ^{E}$ respectively. We shall use these two presentations
interchangeably.\\

The aim of this paper is to study the fields $\FF(Fl(X)\oplus\ZZ G)^{G}$
and to determine how close they are to being purely rational extensions
of $\FF$. The addition of $\ZZ G$ to $Fl(X)$ is intended to make
the lattice into a faithful $G$-lattice. If the action of $G$ on
the $Fl(X)$ is already faithful, then $\FF(Fl(X)\oplus\ZZ G)^{G}$
is rational over $\FF(Fl(X))^{G}$ by \thmref{reduction_by_permutation},
thus in many cases this field is considered instead. In case the $G$-action
on $Fl(X)$ has nonzero kernel $K$, it is interesting to ask whether
there is a connection between $\FF(Fl(X)\oplus\ZZ G)^{G}$ and $\FF(Fl(X))^{G/K}$
(see for example \subref{Semidirect-product}).

We note that while the entire investigation can be made using only
the $G$-lattice notation without any reference to the flows in the
graph, we have found that the use of flows have made some of the proofs
much more intuitive, and in particular it helps in the later parts
when we look for certain nice bases for the lattices $Fl(X)$. The
main idea is that if $f\in Fl(V,E)$ is any flow such that $f(e)=1$
for some $e\in E$ and $g$ is any other flow, then $g-g(e)f$ is
again a flow which is supported on $(V,E\backslash\left\{ e\right\} )$
(we redirected the flow on $e$ using $f$). Thus $Fl(V,E)\cong\ZZ f\oplus Fl(V,E\backslash\left\{ e\right\} )$
as abelian group. The next lemma generalizes this idea.
\begin{lem}
\label{lem:basis_for_flows}Let $X=(V,E)$ be a connected graph, $T=(V,E')$
a subgraph such that its underlying undirected graph is a tree, and
write $E\backslash E'=\left\{ e_{1},...,e_{r}\right\} $. Let $f_{1},...,f_{r}\in Fl(V,E)$
such that the matrix $(f_{i}(e_{j}))$ is upper triangular with $\pm1$
on the diagonal. Then $\left\{ f_{1},...,f_{r}\right\} $ is a basis
for $Fl(V,E)$.\end{lem}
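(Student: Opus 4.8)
The plan is to induct on $r=\left|E\backslash E'\right|$, using the flow-redirecting trick described in the paragraph preceding the lemma. First I would record that, since $T$ is a spanning tree of the connected graph $X$, we have $\left|E'\right|=\left|V\right|-1$ and hence $r=\left|E\right|-\left|V\right|+1=\mathrm{rank}\,Fl(V,E)$; so it suffices to produce a direct-sum decomposition that exhibits the $f_i$ as a basis, and the argument below delivers spanning and $\ZZ$-independence simultaneously. The base case $r=0$ is immediate: then $E=E'$, so $X$ is itself a tree, $Fl(V,E)=0$, and the empty set is a basis.

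For the inductive step I would single out the non-tree edge $e_1$. By upper-triangularity the diagonal entry $f_1(e_1)=\pm1$ is a unit in $\ZZ$, while $f_i(e_1)=0$ for $i>1$. Given an arbitrary flow $g$, set $c=f_1(e_1)^{-1}g(e_1)\in\ZZ$; then $g-cf_1$ is a flow vanishing on $e_1$, i.e. a flow of the subgraph $(V,E\backslash\{e_1\})$. This is precisely the redirecting idea, and it yields the internal direct sum
\[
Fl(V,E)=\ZZ f_1\oplus Fl(V,E\backslash\{e_1\}),
\]
the sum being direct because any multiple $af_1$ lying in $Fl(V,E\backslash\{e_1\})$ satisfies $a\cdot f_1(e_1)=0$, forcing $a=0$.

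It then remains to apply the induction hypothesis to $(V,E\backslash\{e_1\})$. Deleting the non-tree edge $e_1$ leaves the spanning tree $T$ intact, so this subgraph is still connected and has $T$ as a spanning tree with non-tree edges $\{e_2,\dots,e_r\}$. The flows $f_2,\dots,f_r$ all vanish on $e_1$, hence lie in $Fl(V,E\backslash\{e_1\})$, and their matrix $\left(f_i(e_j)\right)_{2\le i,j\le r}$ is again upper triangular with $\pm1$ on the diagonal. By induction $\{f_2,\dots,f_r\}$ is a basis of $Fl(V,E\backslash\{e_1\})$, and combining this with the displayed decomposition shows that $\{f_1,\dots,f_r\}$ is a basis of $Fl(V,E)$.

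The only genuine content, and the step I would be most careful about, is the direct-sum decomposition: one must check both that the remainder $g-cf_1$ really lands inside the smaller flow group and that the splitting is direct — which is exactly where the $\pm1$ diagonal entry being a \emph{unit} is essential — together with the elementary observation that deleting a non-tree edge preserves connectedness and the spanning tree. Alternatively, the same conclusion follows more globally by noting that the restriction map $Fl(V,E)\to\ZZ^r$, $f\mapsto(f(e_1),\dots,f(e_r))$, is an isomorphism — injective because a flow supported on the tree $T$ must vanish (peel off leaves successively), and surjective via the fundamental cycles of the non-tree edges — whence the $f_i$, whose images are the rows of a matrix in $GL_r(\ZZ)$, form a basis.
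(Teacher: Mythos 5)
Your proof is correct, and it is exactly the argument the paper has in mind: the paper leaves this lemma to the reader but explicitly sketches the redirecting idea $g\mapsto g-g(e)f$ and the resulting splitting $Fl(V,E)\cong\ZZ f\oplus Fl(V,E\backslash\{e\})$ in the paragraph immediately preceding the statement, and your induction is the straightforward iteration of that splitting (with the upper-triangularity correctly handling which $f_i$ survive into the smaller graph). Your alternative closing remark --- that restriction to the non-tree edges gives an isomorphism $Fl(V,E)\to\ZZ^{r}$, so any family whose value matrix lies in $GL_{r}(\ZZ)$ is a basis --- also cleanly justifies the paper's subsequent remark that triangularity can be relaxed to invertibility of $(f_i(e_j))$.
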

\begin{proof}
The proof is left to the reader.\end{proof}
\begin{rem}
Actually, the lemma above still holds whenever the matrix $\left(f_{i}(e_{j})\right)$
is invertible, but we will only encounter triangular matrices in this
paper.
\end{rem}
The most natural choice of a set of edges is the complete directed
graph. Let $E_{-}=\left\{ \left(u,v\right)\;\mid\; u,v\in V\;\mbox{distinct}\right\} $
and set $E_{+}=E_{-}\cup\left\{ \left(v,v\right)\;\mid\; v\in V\right\} $.
Clearly every flow in $Fl(V,E_{+})$ can be decomposed as a flow supported
on $E_{-}$ and a flow supported on $E_{+}\backslash E_{-}$ (self
loops), or in other words $Fl(V,E_{+})\cong Fl(V,E_{+})\oplus Fl(V,E_{+}\backslash E_{-})$.
The set $E_{+}\backslash E_{-}$ is just the self loops and $G$ acts
on it as it acts on $V$, hence $Fl(V,E_{+})\cong Fl(V,E_{-})\oplus\ZZ V$.
Thus, it is sufficient to consider the flows on the full directed
graph without loops. The next lemma gives us a more general way to
reduce the number of edges in the graph.
\begin{lem}
Let $X=(V,E),X'=(V,E')$ be two connected graphs with $G$-actions.
Then $\FF(Fl(X)\oplus\ZZ G)^{G}$ is stably isomorphic to $\FF(Fl(X')\oplus\ZZ G)^{G}$
over $\FF$. \end{lem}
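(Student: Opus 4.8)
The plan is to deduce the statement from \thmref{reduction_by_flasque}(1). Since $\FF$ carries the trivial $G$-action and both $Fl(X)\oplus\ZZ G$ and $Fl(X')\oplus\ZZ G$ are faithful $G$-lattices (each has the regular lattice $\ZZ G$ as a direct summand), it suffices to prove that $\left[Fl(X)\oplus\ZZ G\right]^{fl}=\left[Fl(X')\oplus\ZZ G\right]^{fl}$. As the flasque class is additive on direct sums and $\ZZ G$ is a permutation lattice, so that $[\ZZ G]^{fl}=0$, this reduces to the equality $[Fl(X)]^{fl}=[Fl(X')]^{fl}$. The input I would use is the exact sequence of $G$-lattices built in \secref{Flows-in-Graphs},
\[\xymatrix{0\ar[r] & Fl(X)\ar[r] & \ZZ E\ar[r]^{\partial_{E}} & I_{V}\ar[r] & 0},\]
which is exact on the right because $X$ is connected; here $\ZZ E$ is the permutation lattice on the edge set, while $I_{V}=\ker(\varepsilon_{V})$ depends only on the $G$-set $V$ and not on the chosen edges. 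The same sequence for $X'$ has the identical right-hand term $I_{V}$.

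Thus both $Fl(X)$ and $Fl(X')$ appear as kernels of surjections from permutation lattices onto the single lattice $I_{V}$, and the core of the argument is a flasque analogue of Schanuel's lemma: if $0\to M\to P\to N\to0$ and $0\to M'\to P'\to N\to0$ are exact with $P,P'$ permutation, then $[M]^{fl}=[M']^{fl}$. To prove this I would pass to the fibre product $W=P\times_{N}P'$. Both projections $W\to P$ and $W\to P'$ are surjective because each map to $N$ is, and their kernels are $M'$ and $M$ respectively, giving exact sequences
\[\xymatrix{0\ar[r] & M\ar[r] & W\ar[r] & P'\ar[r] & 0}\]
and
\[\xymatrix{0\ar[r] & M'\ar[r] & W\ar[r] & P\ar[r] & 0},\]
in each of which the right-hand term is a permutation lattice.

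What remains --- and the only substantive point --- is the claim that $[W]^{fl}=[M]^{fl}$ whenever $0\to M\to W\to P\to0$ is exact with $P$ permutation; applying it to the two displays above yields $[M]^{fl}=[W]^{fl}=[M']^{fl}$, and with $P=\ZZ E$, $P'=\ZZ E'$, $N=I_{V}$ this completes the proof. To establish the claim I would take a flasque resolution $0\to M\to Q\to F\to0$ with $Q$ permutation and $F$ flasque, and form the pushout $D$ of the inclusions $Q\hookleftarrow M\hookrightarrow W$. Standard properties of the pushout give exact sequences $0\to Q\to D\to P\to0$ and $0\to W\to D\to F\to0$. In the first, $Q$ and $P$ are both permutation; since permutation lattices are coflasque, $\mathrm{Ext}^{1}_{\ZZ G}(P,Q)$ is a sum of groups $\hat{H}^{1}(H_{i},Q)=0$, so the sequence splits and $D\cong Q\oplus P$ is permutation. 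The second sequence is then a flasque resolution of $W$ with flasque term $F$, whence $[W]^{fl}=[F]=[M]^{fl}$. I expect this pushout-and-splitting step to be the main obstacle, as it is the only place where the coflasqueness of permutation lattices is genuinely used; the rest is formal manipulation of the sequences from \secref{Flows-in-Graphs} together with additivity of the flasque class.
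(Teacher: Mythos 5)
Your proof is correct, and its central construction is the same as the paper's: both form the fibre product $W=\ZZ E\times_{I_{V}}\ZZ E'$ over $I_{V}$ and extract the two exact sequences $0\to Fl(X)\to W\to\ZZ E'\to0$ and $0\to Fl(X')\to W\to\ZZ E\to0$. Where you diverge is in how you conclude. The paper stops at this point and applies \thmref{reduction_by_permutation} directly: after adding $\ZZ G$ to force faithfulness, each of these sequences has a permutation cokernel, so $\FF(W\oplus\ZZ G)^{G}$ is rational over both $\FF(Fl(X)\oplus\ZZ G)^{G}$ and $\FF(Fl(X')\oplus\ZZ G)^{G}$, and the stable isomorphism follows in one line. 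You instead prove a flasque analogue of Schanuel's lemma from scratch --- via a pushout against a flasque resolution, using that $\mathrm{Ext}^{1}_{\ZZ G}(P,Q)$ vanishes for permutation $P,Q$ because permutation lattices are coflasque --- to get $[Fl(X)]^{fl}=[W]^{fl}=[Fl(X')]^{fl}$, and then invoke \thmref{reduction_by_flasque}. Your pushout step is sound (the splitting of $0\to Q\to D\to P\to0$ is exactly the content of \lemref{splitting_invertibles} specialized to permutation lattices), and the additivity of $[\,\cdot\,]^{fl}$ over direct sums together with $[\ZZ G]^{fl}=0$ is also fine. The net effect is that you reprove, in this special case, the machinery that \thmref{reduction_by_permutation} already packages: your route buys nothing extra here but does make explicit the flasque-class invariance $[Fl(X)]^{fl}=[Fl(X')]^{fl}$, which the paper also uses later (it records $\left[Fl(X)\right]=\left[I_{V}\right]^{cofl}$ in \secref{The-center}), so the detour is not wasted insight --- just longer than necessary for this particular lemma.
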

\begin{proof}
Consider the following exact diagram:
\[
\xymatrix{ &  & 0\ar[d] & 0\ar[d]\\
 &  & Fl(X')\ar[d]\ar[r]^{=} & Fl(X')\ar[d]\\
0\ar[r] & Fl(X)\ar[r]\ar[d]_{=} & \ZZ E\times_{I_{V}}\ZZ E'\ar[d]\ar[r] & \ZZ E'\ar[d]^{\partial_{E'}}\ar[r] & 0\\
0\ar[r] & Fl(X)\ar[r] & \ZZ E\ar[r]^{\partial_{E}}\ar[d] & I_{V}\ar[r]\ar[d] & 0\\
 &  & 0 & 0
}
\]
where $\ZZ E\times_{I_{V}}\ZZ E'$ is the standard pullback. Since
$\ZZ E,\ZZ E'$ are permutation modules, it follows from \thmref{reduction_by_permutation}
that $\FF(\ZZ G\oplus Fl(X))^{G}\approx_{\FF}\FF(\ZZ G\oplus Fl(X'))^{G}$.
\end{proof}
The previous lemma shows that up to stable rationality, the field
$\FF(Fl(V,E)\oplus\ZZ G)^{G}$ is only a function of the set of vertices
$V$ with its $G$-action. This result can also be motivated from
the nature of flows on graphs. If $(V,E)$ is any connected $G$-graph
and $(V,E')$ is a connected $G$-subgraph, then any flow going through
an edge $e\in E\backslash E'$, can be redirected to a flow in the
graph $(V,E')$, since the vertices in $e$ are already connected
in $E'$. We can thus allow ourselves to write $Fl(V)$ whenever the
exact structure of $E$ is not important up to stable isomorphism
of fields.

Next we reduce the number of vertices in $V$.
\begin{lem}
\label{lem:Removing_vertices}Let $V=\bigsqcup V_{i}$ be the decomposition
to $G$-orbits. Suppose that for some $i\neq j$ there is a $G$-equivariant
map $\psi:V_{i}\to V_{j}$. Then $\FF(Fl(V)\oplus\ZZ G)^{G}\approx\FF(Fl(V')\oplus\ZZ G)^{G}$
where $V'=V\backslash V_{i}$.\end{lem}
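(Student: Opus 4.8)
The plan is to exploit the freedom, guaranteed by the previous lemma (that $\FF(Fl(V)\oplus\ZZ G)^{G}$ depends, up to $\approx_{\FF}$, only on the vertex set $V$ with its $G$-action and not on the chosen system of edges), to compute the field using a connected $G$-graph on $V$ for which the orbit $V_{i}$ contributes nothing to the flow lattice. Concretely, I would first fix an arbitrary connected $G$-graph $X'=(V',E')$ on $V'$; this exists because $V'$ is a nonempty union of $G$-orbits (it contains $V_{j}$, and $j\neq i$), so for instance the full directed graph $E_{-}$ on $V'$ is $G$-stable and connected. I then enlarge it to a graph $X=(V,E)$ on all of $V$ by adjoining, for every $v\in V_{i}$, a single edge $(v,\psi(v))$; writing $E_{\psi}=\left\{ (v,\psi(v))\;\mid\; v\in V_{i}\right\}$ I set $E=E'\sqcup E_{\psi}$.

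Two observations make this graph work. First, $E$ is a $G$-stable edge set: since $\psi$ is $G$-equivariant, $g\cdot(v,\psi(v))=(gv,g\psi(v))=(gv,\psi(gv))\in E_{\psi}$, so $G$ permutes $E_{\psi}$, and hence all of $E$. Second, $X$ is connected, because $X'$ is connected and every new vertex $v\in V_{i}$ is joined by an edge to $\psi(v)\in V_{j}\subseteq V'$; note that there are no self-loops, as $v$ and $\psi(v)$ lie in distinct orbits.

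The key point is that the added orbit carries no flow. Each vertex $v\in V_{i}$ is incident to exactly one edge of $X$, namely $(v,\psi(v))$ (the edges of $E'$ touch only $V'$), so flow conservation at $v$ forces $f\bigl((v,\psi(v))\bigr)=0$ for every $f\in Fl(X)$. Thus every flow vanishes identically on $E_{\psi}$, and conservation at the remaining vertices $w\in V'$ then involves only the edges of $E'$. The restriction map $f\mapsto f|_{E'}$ is therefore a well-defined $G$-equivariant isomorphism $Fl(X)\cong Fl(X')$, with inverse given by extension by zero. Assembling the chain of stable isomorphisms, the previous lemma lets me replace $Fl(V)$ by $Fl(X)$ and $Fl(X')$ by $Fl(V')$, giving
\[
\FF(Fl(V)\oplus\ZZ G)^{G}\approx\FF(Fl(X)\oplus\ZZ G)^{G}=\FF(Fl(X')\oplus\ZZ G)^{G}\approx\FF(Fl(V')\oplus\ZZ G)^{G},
\]
which is the assertion.

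The verifications above are all routine; the one genuinely load-bearing step is the \emph{choice of graph}. Everything hinges on realizing $V_{i}$ as a $G$-stable set of pendant (degree-one) vertices, which is precisely what a $G$-equivariant map $\psi$ into the already-present orbit $V_{j}$ makes possible. Without such a map one could not attach the orbit $V_{i}$ to $V'$ by a $G$-stable collection of leaf edges, and the flow lattice would genuinely acquire new rank rather than being preserved; this is why the hypothesis $i\neq j$ together with the existence of $\psi$ is exactly the input needed.
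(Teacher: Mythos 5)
Your proof is correct and is essentially the paper's own argument: both attach $V_{i}$ to $V'$ by the $G$-stable set of pendant edges $\{(v,\psi(v))\}$ (the paper writes this same edge set via coset representatives of $stab_{G}(u)$), observe that flow conservation at the degree-one vertices forces every flow to vanish there so that $Fl(X)=Fl(X')$, and then invoke the preceding lemma on independence of the edge set. No gaps.
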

\begin{proof}
Choose some $u\in V_{i}$ and set $v=\psi(u)$, then $stab_{G}(u)\subseteq stab_{G}(v)$.
Let $T=\left\{ t_{1},...,t_{k}\right\} $ be a set of left coset representatives
of $stab_{G}(u)$ in $G$. Let $E'=\left\{ (w_{1},w_{2})\;\mid\; w_{1},w_{2}\in V'\right\} $
so that $X'=(V',E')$ is the complete (directed) graph on $V'$ and
set
\[
E=E'\cup\left\{ (t_{i}u,t_{i}v)\;\mid\;1\leq i\leq k\right\} .
\]
It is clear that $E'$ is $G$-stable, and the stability of $E$ follows
from the fact that $stab_{G}(u)\subseteq stab_{G}(v)$. The graph
$X=(V,E)$ is just the full connected graph on $V'$ plus a single
edge from each vertex in $V_{i}$ to some vertex in $V_{j}\subseteq V\backslash V_{i}$.
In particular, any flow in $X$ must be supported on $V\backslash V_{i}$,
so that $Fl(X)=Fl(X')$ and the lemma follows.
\end{proof}
The last lemma shows that we need only to consider such decompositions
$V=\bigsqcup V_{i}$ where $Hom(V_{i},V_{j})=\emptyset$ whenever
$i\neq j$. 

Note that if there is a map $V_{i}\to V_{j}$, then $\left|V_{j}\right|$
divides $\left|V_{i}\right|$. The opposite is also true if we have
that $\left|V_{j}\right|=1$, and thus we have the following corollary.
\begin{cor}
\label{cor:fixed_vertex}If $X=(V,E)$ is a connected $G$-graph which
contains a vertex $v\in V$ with $stab_{G}(v)=G$, then $\FF(Fl(X)\oplus\ZZ G)^{G}\approx\FF(\ZZ G)^{G}$.
\end{cor}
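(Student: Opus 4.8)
The plan is to reduce $X$ to a single vertex by iterating \lemref{Removing_vertices}, using the fixed vertex $v$ as a common target. First I would write $V=\bigsqcup_{i}V_{i}$ for the decomposition of $V$ into $G$-orbits. Since $stab_{G}(v)=G$, the orbit of $v$ is the singleton $V_{0}=\{v\}$ carrying the trivial $G$-action, so in particular $\left|V_{0}\right|=1$.

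The key point is that this singleton is a valid target for every other orbit. Indeed, for each orbit $V_{i}$ with $i\neq0$ the constant map $\psi_{i}:V_{i}\to V_{0}$, $\psi_{i}(u)=v$, is $G$-equivariant: for all $g\in G$ and $u\in V_{i}$ we have $g(\psi_{i}(u))=g(v)=v=\psi_{i}(g(u))$, precisely because $v$ is globally fixed. (This is exactly the remark in the text that a target orbit of size one always admits such a map.) Hence \lemref{Removing_vertices} applies with this $\psi_{i}$ and lets me delete $V_{i}$ without changing the stable-isomorphism class of $\FF(Fl(\,\cdot\,)\oplus\ZZ G)^{G}$.

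I would then apply this repeatedly, removing the orbits $V_{i}$ with $i\neq0$ one at a time. The singleton $\{v\}$ is never removed, so the vertex set stays nonempty and the associated complete graph stays connected at every stage; after finitely many steps only $V'=\{v\}$ remains. For a single vertex the complete loop-free graph has no edges, so $Fl(\{v\})=0$ and $Fl(\{v\})\oplus\ZZ G=\ZZ G$. Chaining the resulting stable isomorphisms (using transitivity of $\approx$) then gives
\[
\FF(Fl(X)\oplus\ZZ G)^{G}\approx\FF(Fl(\{v\})\oplus\ZZ G)^{G}=\FF(\ZZ G)^{G},
\]
as desired. I do not anticipate a genuine obstacle here: the only steps requiring care are checking that the constant map stays equivariant at each stage (immediate from $v$ being fixed) and that connectedness is preserved so that \lemref{Removing_vertices} remains applicable throughout the induction.
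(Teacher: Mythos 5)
Your argument is correct and is exactly the paper's intended proof: the paper derives this corollary from \lemref{Removing_vertices} by observing that an orbit of size one (here $\{v\}$, since $stab_G(v)=G$) always receives a $G$-equivariant constant map from every other orbit, so all other orbits can be deleted, leaving $Fl(\{v\})=0$. You have merely spelled out the iteration and the bookkeeping about connectedness, which the paper leaves implicit.
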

The next generalization is when $1$ is not one of the cardinalities
of the $V_{i}$, but is their greatest common divisor. Assuming that
$gcd\left(\left|V_{1}\right|,...,\left|V_{m}\right|\right)=1$, choose
some $a_{i}\in\ZZ$ such that $\sum_{1}^{m}a_{i}\left|V_{i}\right|=1$.
For each $i$ choose some $v_{i}\in V_{i}$ and let $N_{i}={\displaystyle \sum_{v\in V_{i}}}v\in\ZZ V$
be the norm element in $\ZZ V_{i}$. As before, we have the exact
sequence 
\[
\xymatrix{0\ar[r] & I_{V}\ar[r] & \bigoplus_{1}^{m}\ZZ V_{i}\ar[r]\sp(0.6){\varepsilon_{V}} & \ZZ\ar[r] & 0}
.
\]
Define $\varphi:\ZZ\to\bigoplus_{1}^{m}\ZZ V_{i}$ by $\varphi(k)=\sum_{1}^{m}ka_{i}N_{i}$.
This map is $G$-equivariant since the $N_{i}$ are $G$-invariant,
and since $\varepsilon_{V}(\varphi(k))=k\sum_{1}^{m}a_{i}\left|V_{i}\right|=k$,
the sequence splits.

If $E$ is any set of edges such that $X=(V,E)$ is a connected $G$-graph,
then we have the following exact sequence
\[
\xymatrix{0\ar[r] & Fl(X)\ar[r] & \ZZ\oplus\ZZ E\ar[r]\sp(0.4){id\oplus\partial_{E}} & \ZZ\oplus I_{V}\cong\ZZ V\ar[r] & 0}
.
\]
It follows that $Fl(X)$ is quasi permutation and therefore 
\[
\FF(Fl(X)\oplus\ZZ G)^{G}\approx\FF(\ZZ\oplus\ZZ E\oplus\ZZ G)^{G}\approx\FF(\ZZ G)^{G},
\]
thus reducing the problem to Noether's problem. As mentioned in \remref{Noether-Problem},
the rationality of $\FF(\ZZ G)^{G}/\FF$ is known in many cases (for
example if $G=S_{n}$, $G$ abelian and $\FF$ contains enough roots
of unity, etc).

\section{\label{sec:The-center}The center of the generic $G$-crossed product}

The second natural choice of graph is taking $V=G$, which as explained
in the end of \secref{Generic-Crossed-Products}, corresponds (up
to a stable isomorphism) to the center of the generic $G$-crossed
product.

Let $E_{+}$ be the set of all edges in the full directed graph on
$V$ (including self loops). Fixing an identification of $V$ with
$G$, define a degree function $deg:E_{+}\to G$ by $\deg\left(g,h\right)=g^{-1}h$.
For $S\subseteq G$ let $E_{+}(S)=\deg^{-1}(S)$ be all the edges
of degree in $S$ and write $Cay(G,S)=\left(V,E_{+}(S)\right)$ which
is nothing more than the directed Cayley graph of $G$ corresponding
to the set $S$. In particular $Cay(G,S)$ is connected if and only
if $S$ generates $G$. Excluding the self loops, let $G_{-}=(G\backslash\left\{ e\right\} )$,
and note that $Fl(G,G)\cong Fl(G,G_{-})\oplus\ZZ G$.

As in the general case, if $(V,E),\;(V,E')$ are connected, then $\FF(Fl(V,E))^{G}\sim\FF(Fl(V,E'))^{G}$.
In this case, where $G$ acts freely on $V$, a stronger result holds.
\begin{lem}
\label{lem:Remove_Edges}Suppose that $G$ acts freely on $V$ (so
that $V$ is a disjoint union of copies of $G$). Let $E'\subseteq E$
be two $G$-sets of edges on $V$ such that $(V,E')$ is connected
(and therefore also $(V,E)$). Then $Fl(V,E)\cong Fl(V,E')\oplus\ZZ G^{m}$
where $m=\frac{\left|E\right|-\left|E'\right|}{\left|G\right|}$,
and the isomorphism restricted to $Fl(V,E')$ is the natural embedding.\end{lem}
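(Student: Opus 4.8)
The plan is to realise the claimed decomposition as a split short exact sequence of $G$-lattices, the splitting coming from projectivity. First I would extract the consequence of the freeness hypothesis that is actually used. Since the $G$-action is a graph action and $G$ acts freely on $V$, any $g\in G$ fixing an edge $e$ also fixes its source $e_S\in V$, and freeness on $V$ forces $g=e$; hence $G$ acts freely on every $G$-stable set of edges. In particular $E\setminus E'$ is a free $G$-set of cardinality $m\left|G\right|$, so $\ZZ(E\setminus E')$ is a free, and hence projective, $\ZZ G$-module isomorphic to $\ZZ G^{m}$. This projectivity is precisely the extra strength afforded by freeness and is what will make the relevant sequence split.

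Next I would introduce the restriction homomorphism $\rho:Fl(V,E)\to\ZZ(E\setminus E')$ sending a flow $f$ to $f|_{E\setminus E'}$. Since $E\setminus E'$ is $G$-stable, $\rho$ is a map of $G$-lattices. Its kernel consists exactly of the flows that vanish on $E\setminus E'$, i.e.\ are supported on $E'$; for such a flow the conservation condition at each vertex involves only edges of $E'$, so the kernel is precisely the image of the natural embedding $Fl(V,E')\hookrightarrow Fl(V,E)$. This already gives exactness of
\[
0\to Fl(V,E')\to Fl(V,E)\xrightarrow{\ \rho\ }\ZZ(E\setminus E')
\]
with the left-hand map being the natural embedding.

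The one substantive point is the surjectivity of $\rho$, and this is where connectedness of $(V,E')$ is used. Given $\phi\in\ZZ(E\setminus E')$, extend it by zero to $\tilde\phi\in\ZZ E$. Its boundary $\partial(\tilde\phi)$ lies in the augmentation ideal $I_{V}$, because boundaries have total sum zero. Since $(V,E')$ is connected, the restricted boundary map $\partial:\ZZ E'\to I_{V}$ is onto (this is the equality $Im(\partial_{E'})=I_{V}$ recorded when $Fl$ was defined), so I can pick $\eta\in\ZZ E'$ with $\partial(\eta)=-\partial(\tilde\phi)$. Then $f:=\tilde\phi+\eta$ is a genuine flow on $(V,E)$ with $\rho(f)=\phi$; in words, $\eta$ redirects through $E'$ the imbalance created by $\phi$, exactly the redirection heuristic used before \lemref{basis_for_flows}. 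Thus $\rho$ is surjective and the displayed sequence becomes a short exact sequence ending in $\ZZ(E\setminus E')\to0$.

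Finally, because $\ZZ(E\setminus E')\cong\ZZ G^{m}$ is $\ZZ G$-projective, this short exact sequence splits $G$-equivariantly, yielding $Fl(V,E)\cong Fl(V,E')\oplus\ZZ G^{m}$ in which the summand $Fl(V,E')$ is its natural image, as required. As a consistency check the ranks match: $Fl(V,E)$ and $Fl(V,E')$ have ranks $\left|E\right|-\left|V\right|+1$ and $\left|E'\right|-\left|V\right|+1$, whose difference $\left|E\right|-\left|E'\right|=m\left|G\right|$ is the rank of $\ZZ G^{m}$. I expect the only real work to be the surjectivity step; the equivariance of $\rho$ and the splitting are then formal.
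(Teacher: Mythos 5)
Your proof is correct, and it reaches the conclusion by a somewhat different mechanism than the paper. The paper argues constructively: it picks orbit representatives $(v_i,w_i)$ of $E\backslash E'$, closes each such edge into a circular flow $C_i$ using a path inside the connected graph $(V,E')$, and observes that the translates $gC_i$ meet $E\backslash E'$ in exactly one edge each, so that $\bigoplus_i\ZZ\left\{ gC_i\right\}\cong\ZZ G^{m}$ is an explicit $G$-stable complement to $Fl(V,E')$. You instead package the same combinatorics as the short exact sequence $0\to Fl(V,E')\to Fl(V,E)\to\ZZ(E\backslash E')\to0$, prove surjectivity of the restriction map by the redirection argument (connectedness gives $Im(\partial_{E'})=I_{V}$), and then split abstractly because freeness makes $\ZZ(E\backslash E')\cong\ZZ G^{m}$ a free, hence projective, $\ZZ G$-module. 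The two proofs use the hypotheses identically --- connectedness of $(V,E')$ for the redirection, freeness for the $\ZZ G^{m}$ structure --- but yours cleanly isolates where each one enters, while the paper's version has the practical advantage of exhibiting an explicit $G$-stable basis of the complement (the flows $gC_i$), which is the sort of concrete basis the author exploits elsewhere (compare \lemref{basis_for_flows} and the computations in \subref{Groups-with-cyclic-sylow}). Note also that the paper's $C_i$ are nothing but an explicit section of your map $\rho$, so the arguments are ultimately two presentations of the same splitting.
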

\begin{proof}
Choose a set of orbit representatives $\left(v_{i},w_{i}\right)$
of $E\backslash E'$. For each $i$, choose some flow in $(V,E')$
from $w_{i}$ to $v_{i}$ (which exists since it is connected) and
complete it with the edge $(v_{i},w_{i})$ to a circular flow $C_{i}$.
By construction, each edge of $E\backslash E'$ appears in exactly
one of the flows $\bigcup_{i}\left\{ gC_{i}\;\mid\; g\in G\right\} $.
These can be used to redirect flows from edges in $E\backslash E'$
into $E'$, or more precisely $Fl(V,E)=Fl(V,E')\oplus\left(\bigoplus_{i}\ZZ\left\{ gC_{i}\;\mid\; g\in G\right\} \right)$
where for each $i$ we have $\ZZ\left\{ gC_{i}\;\mid\; g\in G\right\} \cong\ZZ G$.\end{proof}
\begin{cor}
\label{cor:flows_for_cyclic_groups}Let $G$ be cyclic with generator
$\sigma$. Then for any $S\subseteq G$ with $\sigma\in S$ we have
$Fl(G,S)\cong Fl(G,\left\{ \sigma\right\} )\oplus\ZZ G^{\left|S\right|-1}$
and $Fl(G,\left\{ \sigma\right\} )\cong\ZZ$ with the trivial action.
In particular, $Fl(G,S)$ is a permutation lattice.\end{cor}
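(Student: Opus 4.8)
The plan is to deduce both claims from \lemref{Remove_Edges}, applied to the free action of $G$ on the vertex set $V=G$ by left translation.

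First I would establish the direct-sum decomposition. Take $E'=E_{+}(\{\sigma\})$ and $E=E_{+}(S)$; since $\sigma\in S$ we have $E'\subseteq E$, both are $G$-stable by construction, and $(V,E')=Cay(G,\{\sigma\})$ is connected because $\sigma$ generates $G$ (using the stated criterion that $Cay(G,S)$ is connected iff $S$ generates $G$). For each degree $s$ the fibre $\deg^{-1}(s)$ consists of exactly the $\left|G\right|$ edges $(g,gs)$, so $\left|E_{+}(S)\right|=\left|S\right|\cdot\left|G\right|$ and $\left|E_{+}(\{\sigma\})\right|=\left|G\right|$. Hence $m=(\left|E\right|-\left|E'\right|)/\left|G\right|=\left|S\right|-1$, and \lemref{Remove_Edges} yields $Fl(G,S)\cong Fl(G,\{\sigma\})\oplus\ZZ G^{\left|S\right|-1}$, with the copy of $Fl(G,\{\sigma\})$ sitting inside as the natural embedding.

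Next I would identify $Fl(G,\{\sigma\})$. The graph $Cay(G,\{\sigma\})$ is a single directed cycle $e\to\sigma\to\sigma^{2}\to\cdots\to\sigma^{\left|G\right|-1}\to e$, with $\left|G\right|$ vertices and $\left|G\right|$ edges, so by the rank formula its flow group has rank $\left|E\right|-\left|V\right|+1=1$ and is generated by the circular flow $f$ with $f(g,g\sigma)=1$ on every edge. To see the action is trivial, I would note that the $G$-action permutes the edges of degree $\sigma$ among themselves, so for every $h\in G$ one computes $(h\cdot f)(g,g\sigma)=f(h^{-1}g,h^{-1}g\sigma)=1$, whence $h\cdot f=f$; thus $Fl(G,\{\sigma\})\cong\ZZ$ with trivial $G$-action.

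Finally, $\ZZ$ with trivial action is the permutation lattice $\ZZ\nicefrac{G}{G}$, and $\ZZ G$ is a permutation lattice, so the direct sum $Fl(G,S)\cong\ZZ\oplus\ZZ G^{\left|S\right|-1}$ is again a permutation lattice. Since the corollary is a direct specialization of \lemref{Remove_Edges}, I do not anticipate a real obstacle; the only points meriting care are the edge count that pins down $m=\left|S\right|-1$ and the verification that the rotation action fixes the all-ones flow, both of which are routine.
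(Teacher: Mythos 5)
Your proposal is correct and follows essentially the same route as the paper, which likewise derives the corollary from \lemref{Remove_Edges} by observing that $Cay(G,\left\{ \sigma\right\} )$ is a single directed cycle on which $G$ acts trivially on the rank-one flow group. You merely spell out the edge count giving $m=\left|S\right|-1$ and the invariance of the all-ones circular flow, details the paper leaves implicit.
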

\begin{proof}
The result follows since $Cay(G,\left\{ \sigma\right\} )$ is just
a simple cycle and $G$ acts trivially on $Fl(G,\left\{ \sigma\right\} )\cong\ZZ$.\end{proof}
\begin{rem}
\label{rem:cyclic-groups}By \thmref{reduction_by_permutation}, the
field $\FF(\ZZ\oplus\left(\ZZ G\right)^{k})^{G},\; k\geq1$ is rational
over $\FF(\ZZ G)^{G}$. Fischer's theorem (see \remref{Noether-Problem})
states that if $G$ is abelian with exponent $m$ and $\FF$ contains
a primitive $m$-th root of unity, then $\FF(\ZZ G)^{G}$ is rational
over $\FF$. Thus, if $\FF$ contains a primitive $\left|G\right|$-th
root of unity for $G$ cyclic, then $\FF(Fl(V,E)\oplus\ZZ G)^{G}$
is rational over $\FF$. For $\FF=\QQ$, even the extensions $\QQ(\ZZ C_{n})^{C_{n}}/\QQ$
are not always rational, with the first counter example given by Swan
in \cite{swan_invariant_1969} for $n=47$ and the smallest counter
example is $n=8$ \cite{lenstra_rational_1974}.
\end{rem}
As in many other cases, we will try to study the graphs $Cay(G,S)$,
where $\left\langle S\right\rangle =G$, by restricting the $G$-action
to some subgroups. More generally, if $M$ is a $G$ lattice we denote
by $M_{H}$ its restriction to the $H$-action where $H\leq G$. If
$H$ is any subgroup of $G$ and $S_{0}\subseteq S$ is a generating
set for $H$, then $Cay(H,S_{0})$ has a natural embedding in $Cay(G,S)$.
Thus, turning to their respective flow lattices, we see that $Fl(H,S_{0})$
is an $H$-sublattice of $Fl(G,S)_{H}$ (the lattice with the $H$-action).
The passage to this subgraph is given in the following lemma.
\begin{lem}
\label{lem:reducing_to_subgroup}Let $G$ be a group and $H$ a subgroup.
Let $S$ be a generating set for $G$ containing a generating set
$S_{0}$ for $H$. Then $Fl(G,S)\cong Fl(H,S_{0})\oplus P$ as $H$
lattices where $P$ is a free $H$ lattice, and the restriction of
the isomorphism to $Fl(H,S_{0})$ is the natural embedding. \end{lem}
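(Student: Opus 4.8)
The plan is to reduce to \lemref{Remove_Edges} by comparing $Cay(G,S)$ not directly with $Cay(H,S_0)$ (which has the wrong vertex set) but with a carefully chosen connected $H$-subgraph on the full vertex set $V=G$. First I would record the structure of the restricted action: since $G$ acts on $V=G$ by left multiplication, the restricted $H$-action is free and its orbits are exactly the right cosets $Hg$. Thus $V$ is a disjoint union of $[G:H]=:n$ free $H$-sets, which is precisely the hypothesis needed to invoke \lemref{Remove_Edges} with the group taken to be $H$.

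Next I would construct an intermediate edge set $E'\subseteq E_+(S)$. On the identity coset $H=He$ I take all edges of $Cay(H,S_0)$; these lie in $E_+(S)$ because $S_0\subseteq S$, and they are $H$-invariant. To connect the remaining cosets I pass to the Schreier graph on the $n$ right cosets, whose edges are given by right multiplication by $S$; this graph is connected because $S$ generates $G$, so it admits a spanning tree with $n-1$ edges, each joining two distinct cosets. The key observation is that a single colored Schreier edge from a coset $C$ to $C'=Cs$ lifts to exactly one $H$-orbit of edges $\{(x,xs):x\in C\}$ in $Cay(G,S)$, and this orbit is a perfect matching between $C$ and $C'$. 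I then let $E'$ be the union of the $Cay(H,S_0)$-edges with the $H$-orbits of the $n-1$ chosen spanning-tree edges; a short induction along the tree (each leaf coset is glued to its parent by a matching) shows that $(V,E')$ is connected.

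The heart of the argument is that $E'$ introduces no new flows, i.e.\ $Fl(V,E')$ coincides with $Fl(H,S_0)$ under the natural embedding. To see this I would peel leaves. Since the only internal edges of $E'$ lie in the identity coset, every vertex of a non-identity leaf coset of the tree is incident to a single edge of $E'$, its matching edge to the parent; flow conservation at a degree-one vertex forces that edge to carry zero flow. Removing such leaf cosets one at a time shows that any flow on $(V,E')$ vanishes on all matching edges, hence is supported on the $Cay(H,S_0)$-edges and satisfies conservation there. Therefore $Fl(V,E')=Fl(H,S_0)$, which a rank count $|E'|-|V|+1=|H|\,|S_0|-|H|+1$ confirms.

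Finally, with $E'\subseteq E_+(S)$ two $H$-invariant edge sets, $(V,E')$ connected, and $H$ acting freely on $V$, \lemref{Remove_Edges} yields $Fl(G,S)=Fl(V,E_+(S))\cong Fl(V,E')\oplus(\ZZ H)^m$, the restriction to $Fl(V,E')$ being the natural embedding, where $m=(|E_+(S)|-|E'|)/|H|$. Composing with the identification $Fl(V,E')=Fl(H,S_0)$ gives the desired $H$-lattice isomorphism $Fl(G,S)\cong Fl(H,S_0)\oplus P$ with $P=(\ZZ H)^m$ free and the natural embedding preserved. I expect the main obstacle to be the bookkeeping in building $E'$, namely verifying that each spanning-tree edge lifts to a single $H$-orbit forming a matching and that the leaf-peeling argument genuinely annihilates every flow outside $Cay(H,S_0)$; once these are in place the conclusion is immediate from \lemref{Remove_Edges}.
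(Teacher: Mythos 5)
Your proposal is correct and is essentially the argument the paper intends: the paper's proof is left to the reader as ``a fine tuning of \lemref{Removing_vertices} together with \lemref{Remove_Edges}'', and your construction of the intermediate $H$-invariant edge set $E'$ (Cayley edges of $Cay(H,S_0)$ on the identity coset plus matchings lifting a spanning tree of the Schreier graph, so that leaf-peeling kills all flow outside $Cay(H,S_0)$) is exactly that fine tuning, followed by the same application of \lemref{Remove_Edges} for the free $H$-action. The details you flag as potential obstacles (each Schreier tree edge lifting to a single $H$-orbit forming a perfect matching, and degree-one vertices forcing zero flow) all check out, including the rank count.
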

\begin{proof}
This is just a fine tuning of \lemref{Removing_vertices} together
with the previous lemma and is left to the reader.
\end{proof}
In case $M_{1},M_{2}$ are any two $G$-modules, the tensor product
$M_{1}\otimes M_{2}$ is considered over $\ZZ$ with the diagonal
$G$-action, namely $g(m_{1}\otimes m_{2})=gm_{1}\otimes gm_{2}$
for any $g\in G$ and $m_{i}\in M_{i}$, $i=1,2$. If $M_{1}$ is
a $\ZZ G-\ZZ H$ bimodule and $M_{2}$ an $H$-module for some subgroup
$H\leq G$, then we write $M_{1}\otimes_{H}M_{2}$ for the $G$-module
$M_{1}\otimes_{\ZZ H}M_{2}$ where $G$ acts through $M_{1}$. Under
these notations we have the following.
\begin{lem}
\label{lem:Move_to_subgroup}Let $G$ be a group, $H\leq G$ a subgroup
and $M$ a $G$-lattice and set $M_{1}=\ZZ\nicefrac{G}{H}\otimes M$
and $M_{2}=\ZZ G\otimes_{H}M_{H}$. Then the map $\varphi:M_{1}\to M_{2}$
defined by $\varphi(gH\otimes a)=g\otimes g^{-1}a$ is an isomorphism
of $G$-lattices. \end{lem}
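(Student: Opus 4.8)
The statement is the standard tensor identity (projection formula) for induction, asserting $\mathrm{Ind}_H^G(\mathrm{Res}_H^G M)\cong\ZZ\nicefrac{G}{H}\otimes M$; the plan is to check that the prescribed $\varphi$ is a well-defined $G$-map and to exhibit an explicit inverse. Recall the two $G$-actions in play: on $M_{1}=\ZZ\nicefrac{G}{H}\otimes M$ the action is diagonal, $g'(gH\otimes a)=g'gH\otimes g'a$, whereas on $M_{2}=\ZZ G\otimes_{H}M_{H}$ it is through the left factor, $g'(g\otimes m)=g'g\otimes m$, subject to the balancing relation $gh\otimes m=g\otimes hm$ for $h\in H$.

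First I would establish that $\varphi$ is well defined. Since a coset $gH$ does not determine $g$, I must verify that the formula $\varphi(gH\otimes a)=g\otimes g^{-1}a$ is independent of the chosen representative: replacing $g$ by $gh$ with $h\in H$ gives $gh\otimes(gh)^{-1}a=gh\otimes h^{-1}g^{-1}a$, which equals $g\otimes g^{-1}a$ in $M_{2}$ precisely by the $\ZZ H$-balancing relation. Extending $\ZZ$-bilinearly then yields a homomorphism $M_{1}\to M_{2}$. Next I would check $G$-equivariance: on one hand $\varphi(g'(gH\otimes a))=\varphi(g'gH\otimes g'a)=g'g\otimes(g'g)^{-1}g'a=g'g\otimes g^{-1}a$, and on the other $g'\varphi(gH\otimes a)=g'(g\otimes g^{-1}a)=g'g\otimes g^{-1}a$, so the two coincide.

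To prove $\varphi$ is an isomorphism I would construct its inverse directly, setting $\psi(g\otimes m)=gH\otimes gm$. The only subtlety is that $\psi$ must respect the defining relation of the tensor product over $\ZZ H$: for $h\in H$ one computes $\psi(gh\otimes m)=ghH\otimes ghm=gH\otimes ghm$ and $\psi(g\otimes hm)=gH\otimes g(hm)=gH\otimes ghm$, which agree, so $\psi$ descends to a map on $M_{2}$; a one-line check gives its $G$-equivariance as well. Finally I would verify that $\psi$ and $\varphi$ are mutually inverse on generators, via $\psi(\varphi(gH\otimes a))=\psi(g\otimes g^{-1}a)=gH\otimes g(g^{-1}a)=gH\otimes a$ and $\varphi(\psi(g\otimes m))=\varphi(gH\otimes gm)=g\otimes g^{-1}(gm)=g\otimes m$.

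The computations are entirely routine; the only point that requires care --- and the place where the twist by $g^{-1}$ is indispensable --- is reconciling the two sources of ambiguity, namely the choice of coset representative in $M_{1}$ and the $\ZZ H$-balancing relation in $M_{2}$. Once the well-definedness of both $\varphi$ and $\psi$ is secured, bijectivity is immediate, and since both maps are $G$-equivariant, $\varphi$ is an isomorphism of $G$-lattices.
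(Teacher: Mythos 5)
Your proof is correct and complete; it is precisely the routine verification (well-definedness over the coset choice, the $\ZZ H$-balancing for the inverse $\psi(g\otimes m)=gH\otimes gm$, and $G$-equivariance) that the paper explicitly leaves to the reader. Nothing is missing, and the twist by $g^{-1}$ is handled exactly where it matters.
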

\begin{proof}
The proof is straight forward and is left to the reader.
\end{proof}
In the previous case we studied the action of $G$ on $\tilde{V}=\bigsqcup V_{i}$
where $gcd(\left|V_{1}\right|,...,\left|V_{k}\right|)=1$. Letting
$H_{i}$ be the stabilizer of some $v_{i}\in V_{i}$ for each $i$,
we see that $gcd\left(\left[G:H_{1}\right],...,\left[G:H_{k}\right]\right)=1$,
since $V_{i}\cong\nicefrac{G}{H_{i}}$ as $G$-sets and hence $\ZZ V_{i}\cong\ZZ\nicefrac{G}{H_{i}}$.
Clearly, if $M$ is any invertible $G$-lattice, namely a direct summand
of a permutation $G$-lattice, it is also invertible as an $H$ lattice
for any subgroup $H\leq G$. For the other direction, it is well known
that if $M_{H_{i}}$ is invertible over subgroups $H_{i}\leq G$ that
satisfy the condition above, then $M$ is invertible as a $G$-lattice.
Let us recall the proof.

Consider the split exact sequence 
\[
\xymatrix{0\ar[r] & I_{\tilde{V}}\ar[r] & \bigoplus_{1}^{k}\ZZ V_{i}\ar[r]\sp(0.6){\varepsilon_{\tilde{V}}} & \ZZ\ar[r] & 0}
.
\]
Tensoring (over $\ZZ$) this sequence with the lattice $M$ we get
a split exact sequence
\[
\xymatrix{0\ar[r] & I_{\tilde{V}}\otimes M\ar[r] & \bigoplus_{1}^{k}\left(\ZZ\nicefrac{G}{H_{i}}\otimes M\right)\ar[r]\sp(0.7){\varepsilon_{\tilde{V}}} & M\ar[r] & 0}
,
\]
so that $M$ is a direct summand of $\bigoplus_{1}^{k}\left(\ZZ\nicefrac{G}{H_{i}}\otimes_{\ZZ}M\right)$.
By the previous lemma $\ZZ\nicefrac{G}{H}\otimes M\cong\ZZ G\otimes_{H}M_{H}$,
so the middle term in the sequence above is just $\bigoplus_{1}^{k}\left(\ZZ G\otimes_{H_{i}}M_{H_{i}}\right)$.
It is now clear that if $M_{H_{i}}$ is a direct summand of a permutation
$H_{i}$-lattice, then $M$ is a direct summand of a permutation $G$-lattice.
By \lemref{Remove_Edges}, the invertibility of $Fl(G,S)$ doesn't
depend on the generating set $S$, hence we can assume that we deal
with $Fl(G,G)$. 
\begin{cor}
\label{cor:invertible_by_subgroup}Let $G$ be a group and $H_{i}\leq G$
subgroups such that $gcd\left(\left[G:H_{1}\right],...,\left[G:H_{k}\right]\right)=1$.
The $G$-lattice $Fl(G,G)$ is invertible if and only $Fl(H_{i},H_{i})$
is invertible for each $i$.
\end{cor}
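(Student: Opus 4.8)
The plan is to read off this corollary directly from the machinery that the excerpt has just assembled, so the work reduces to verifying that the general invertibility principle applies to the specific lattice $Fl(G,G)$. First I would handle the easy direction. If $Fl(G,G)$ is invertible as a $G$-lattice, then by definition it is a direct summand of a permutation $G$-lattice $P$; restricting the whole splitting to any subgroup $H_i$ shows that $Fl(G,G)_{H_i}$ is a direct summand of $P_{H_i}$, and $P_{H_i}$ is still a permutation $H_i$-lattice since a $G$-set is in particular an $H_i$-set. By \lemref{reducing_to_subgroup} (applied with $S=G$ and any generating set $S_0$ of $H_i$), we have $Fl(G,G)_{H_i}\cong Fl(H_i,S_0)\oplus P'$ with $P'$ free, and by \lemref{Remove_Edges} the invertibility of the flow lattice of $H_i$ does not depend on the generating set, so this is equivalent to $Fl(H_i,H_i)$ being invertible. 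Thus invertibility of $Fl(G,G)$ forces invertibility of each $Fl(H_i,H_i)$, regardless of the coprimality hypothesis.

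The interesting direction is the converse, and here I would simply invoke the argument laid out verbatim in the paragraph preceding the corollary. Setting $M=Fl(G,G)$, the coprimality condition $\gcd\left(\left[G:H_1\right],\dots,\left[G:H_k\right]\right)=1$ is exactly what makes the norm-type splitting work: one chooses $V_i\cong\nicefrac{G}{H_i}$, writes down the split exact sequence
\[
\xymatrix{0\ar[r] & I_{\tilde{V}}\ar[r] & \bigoplus_{1}^{k}\ZZ V_i\ar[r]\sp(0.6){\varepsilon_{\tilde{V}}} & \ZZ\ar[r] & 0},
\]
and tensors it over $\ZZ$ with $M$ to exhibit $M$ as a direct summand of $\bigoplus_1^k\left(\ZZ\nicefrac{G}{H_i}\otimes M\right)$. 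By \lemref{Move_to_subgroup} each summand is $\ZZ G\otimes_{H_i}M_{H_i}$, which is induced from the $H_i$-lattice $M_{H_i}$. If each $M_{H_i}=Fl(G,G)_{H_i}$ is invertible over $H_i$ — equivalently, by the reduction above, if each $Fl(H_i,H_i)$ is invertible — then each induced lattice is a direct summand of a permutation $G$-lattice, hence so is the direct sum, hence so is its direct summand $M$.

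The only genuine content I need to supply beyond quoting the preceding paragraph is the identification of $M_{H_i}$ with (up to permutation summands) $Fl(H_i,H_i)$, so that ``$M_{H_i}$ invertible over $H_i$'' and ``$Fl(H_i,H_i)$ invertible'' are the same statement; this is precisely \lemref{reducing_to_subgroup} combined with \lemref{Remove_Edges}, and I would state it explicitly as the bridge between the two directions. I expect the \emph{main obstacle} to be purely expository rather than mathematical: the surrounding text proves the abstract statement ``$M$ is invertible iff each $M_{H_i}$ is'' for a general lattice $M$ under the coprimality hypothesis, so the corollary is essentially that general fact specialized to $M=Fl(G,G)$, and the real care is in making sure the restriction $Fl(G,G)_{H_i}$ is correctly recognized as an invertibility-equivalent copy of $Fl(H_i,H_i)$ via the free-summand lemmas. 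Once that identification is pinned down, both implications are immediate from the displayed split sequence and the induction formula of \lemref{Move_to_subgroup}.
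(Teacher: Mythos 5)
Your proposal is correct and follows essentially the same route as the paper: the easy direction by restricting the permutation-lattice splitting to $H_i$, the converse by tensoring the split augmentation sequence with $M=Fl(G,G)$ and invoking \lemref{Move_to_subgroup}, and the identification $Fl(G,G)_{H_i}\cong Fl(H_i,H_i)\oplus(\text{free})$ via \lemref{reducing_to_subgroup} and \lemref{Remove_Edges} as the bridge. The paper treats the corollary as an immediate consequence of exactly this preceding argument, so your only addition is making the bridge explicit, which is a fair expository improvement rather than a different proof.
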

A natural choice for $H_{i}$ in the above corollary are Sylow subgroups
of $G$, and a trivial reason for $Fl(H_{i},H_{i})$ to be invertible
is if $H_{i}$ is cyclic. We are thus led to investigate groups such
that all of their Sylow subgroups are cyclic, which are called $Z$-groups.
These type of groups arise naturally in the theory of lattice invariant
as we shall now recall.

Let $E$ be any set of edges on $V=G$ such that $X=(V,E)$ is connected.
Recall that we have the exact sequences
\[
\xymatrix{0\ar[r] & I_{V}\ar[r] & \ZZ V\ar[r]^{\epsilon} & \ZZ\ar[r] & 0\\
0\ar[r] & Fl(X)\ar[r] & \ZZ E\ar[r]^{\partial} & I_{V}\ar[r] & 0.
}
\]
Since both $\ZZ V$ and $\ZZ E$ are free $G$ modules, it follows
that they are also free over any subgroup $\tilde{G}\leq G$, and
therefore $\hat{H}^{i}(\tilde{G},\ZZ V)=\hat{H}^{i}(\tilde{G},\ZZ E)=0$
for any $i\in\ZZ$. Taking the long exact sequence of the Tate cohomology
we conclude that
\[
\hat{H}^{1}(\tilde{G},Fl(X))\cong\hat{H}^{0}(\tilde{G},I_{V})\cong\hat{H}^{-1}(\tilde{G},\ZZ)=0,
\]
so that $Fl(X)$ is coflasque. In particular, the second sequence
above is a coflasque resolution of $I_{V}$. If $X'=(V,E')$ is any
other connected graph, it follows that $\left[Fl(X')\right]=\left[I_{V}\right]^{cofl}=\left[Fl(X)\right]$,
thus there are permutation lattices $P,P'$ such that $Fl(X)\oplus P\cong Fl(X')\oplus P'$.

Taking $\left[I_{G}\right]^{cofl}=\left[I_{V}\right]^{cofl}=\left[Fl(X)\right]$
and dualizing, we get that $\left[I_{G}^{*}\right]^{fl}=\left[I_{V}^{*}\right]^{fl}=\left[Fl(X)\right]$.
In \cite{endo_classification_1975}, Endo and Miyata proved that there
is an equality of sets $coflasque=flasque=invertible$ for $G$-lattices
if and only if $\left[I_{G}^{*}\right]^{fl}$ is invertible, which
is of course equivalent to $Fl(X)$ being invertible. Another equivalent
condition given by Endo and Miyata is that $G$ is a $Z$-group, namely
that every Sylow subgroup of $G$ is cyclic. 

Groups with cyclic Sylow subgroup also appear in Saltman's representing
objects for crossed products in \cite{saltman_retract_1984}. More
precisely, let $\LL/\LL^{G}$ be a $G$-Galois extension and consider
$Fl(G,G_{-})$. Saltman used Brauer theory in order to prove that
the field $\LL(Fl(G,G_{-}))^{G}$ (which is the center of Saltman's
generic $G$-crossed product for the $\LL/\LL^{G}$ extension) is
a retract rational extension of $\LL^{G}$ if and only if $G$ is
a $Z$-group if and only if there is an exact sequence 
\[
\xymatrix{0\ar[r] & Fl(G,G_{-})\ar[r] & P\ar[r] & I\ar[r] & 0}
\]
where $P$ is a permutation lattice and $I$ is invertible. Since
$Fl(G,G_{-})$ is coflasque, this last condition is also equivalent
to $Fl(G,G_{-})$ being invertible using \lemref{splitting_invertibles}. 

On the other hand, suppose that $G$ is ``far'' from being a $Z$-group
in the sense that it has some Sylow subgroup $P$ which is not a rank
1 or 2 abelian group. S Then Saltman proved that $\FF(Fl(X))^{G}$
is not retract rational over $\FF$ and in particular it is not stably
rational (Theorem 12.17 in \cite{saltman_lectures_1999}).

We note that here we consider the extension $\FF(Fl(X))^{G}/\FF$
where $G$ acts on $\FF$ trivially and not $\LL(Fl(X))^{G}/\LL^{G}$
where $\LL/\LL^{G}$ is a $G$-Galois extension, so we cannot use
directly the invertibility of $Fl(X)$ in case $G$ is a $Z$-group.
Nevertheless, we begin by investigating $Z$-groups.

\subsection{\label{sub:Groups-with-cyclic-sylow}Groups with cyclic $p$-Sylow
subgroups}

It is well known that each $Z$-group $G$ is isomorphic to a semidirect
product $C_{n}\rtimes C_{m}$ where $C_{n}=\left\langle \sigma\right\rangle ,C_{m}=\left\langle \tau\right\rangle $
are cyclic of coprime orders $n$ and $m$ respectively (see \cite{hall_theory_1999},
section 9.4). As already seen, if $G$ is a $Z$-group, then $Fl(G,E)$
is invertible. It follows that if $\LL/\LL^{G}$ is $G$-Galois, then
$\LL\left(M\right)^{G}/\LL^{G}$ is a retract rational extension.
A similar result holds in our case.
\begin{lem}
\label{lem:Z-group_is_retract}Let $G$ be a Z-group. Let $\zeta$
be a primitive $t$-th root of unity, where $t$ is the highest power
of $2$ dividing $\left|G\right|$. If $Gal(\FF(\zeta)/\FF)$ is cyclic,
then the field $\FF(M)^{G}$ is retract rational over $\FF$ for any
invertible faithful $G$-lattice $M$. \end{lem}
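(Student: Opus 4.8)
The plan is to reduce the trivial-action problem over $\FF$ to the $G$-Galois situation recalled just above, where retract rationality of $\LL(M)^{G}/\LL^{G}$ is already available for invertible $M$ whenever $\LL/\LL^{G}$ is a $G$-Galois extension of fields. The bridge is to manufacture such a Galois extension by adjoining a faithful permutation lattice. Concretely, I would take $\LL=\FF(\ZZ G)$, where $G$ permutes the standard basis of $\ZZ G$; this action is faithful, so by Artin's lemma $\LL/\LL^{G}$ is $G$-Galois with $\LL^{G}=\FF(\ZZ G)^{G}$. Applying the $G$-Galois result to the invertible lattice $M$ then gives that $\LL(M)^{G}=\FF(\ZZ G\oplus M)^{G}$, with its diagonal $G$-action, is retract rational over $\FF(\ZZ G)^{G}$. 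Note that this field is genuinely the same object whether one views $\FF(\ZZ G\oplus M)$ as $\LL(M)$ with a $G$-Galois action on $\LL$, or as the multiplicative invariant field of the lattice $\ZZ G\oplus M$ with trivial action on $\FF$; the whole argument is a gluing of two theorems along this common field.

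Next I would descend from $\ZZ G\oplus M$ back to $M$ using the faithfulness of $M$. The split exact sequence
\[
\xymatrix{0\ar[r] & M\ar[r] & M\oplus\ZZ G\ar[r] & \ZZ G\ar[r] & 0}
\]
has permutation cokernel $\ZZ G$ and faithful kernel $M$, so \thmref{reduction_by_permutation}(1) (trivial action on $\FF$) shows that $\FF(M\oplus\ZZ G)^{G}$ is rational over $\FF(M)^{G}$. In particular the two fields are stably isomorphic over $\FF$. Since retract rationality is transitive and is a stable-isomorphism invariant, it would then remain only to prove that the Noether-type field $\FF(\ZZ G)^{G}$ is itself retract rational over $\FF$: combining this with the previous paragraph (retract rationality of $\FF(M\oplus\ZZ G)^{G}$ over $\FF(\ZZ G)^{G}$) yields retract rationality of $\FF(M\oplus\ZZ G)^{G}$ over $\FF$, and descending through the rational extension gives the lemma.

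This last reduction is the heart of the matter, and it is exactly here that the root-of-unity hypothesis is used; it must be established independently, since it is the special case $M=\ZZ G$ of the lemma itself. Retract rationality of $\FF(\ZZ G)^{G}/\FF$ amounts to the existence of a generic $G$-Galois extension over $\FF$, and for the $Z$-group $G\cong C_{n}\rtimes C_{m}$ with $\gcd(n,m)=1$ I would build this from its cyclic layers. For every odd prime power $q\mid|G|$ the cyclotomic Galois group $\mathrm{Gal}(\FF(\zeta_{q})/\FF)$ is automatically cyclic, being a subgroup of the cyclic group $(\ZZ/q\ZZ)^{\times}$; thus the only obstruction comes from the $2$-part $t$ of $|G|$, which is precisely the cyclic $2$-Sylow $C_{t}$, and the needed hypothesis there is exactly that $\mathrm{Gal}(\FF(\zeta_{t})/\FF)$ be cyclic. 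Under this assumption Saltman's construction of generic Galois extensions for groups with cyclic Sylow subgroups applies (see \cite{saltman_retract_1984,saltman_lectures_1999}), and the semidirect decomposition together with the coprimality $\gcd(n,m)=1$ assembles the cyclic layers into a $G$-Galois extension with retract rational fixed field. I expect this base case to be the main obstacle: producing the generic cyclic $2$-extension, where the failure of $(\ZZ/2^{a}\ZZ)^{\times}$ to be cyclic for $a\ge 3$ is exactly what the cyclotomic hypothesis repairs.
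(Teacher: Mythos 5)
Your proposal is correct and follows essentially the same route as the paper: apply Saltman's theorem that $\LL(M)^{G}/\LL^{G}$ is retract rational for invertible $M$ with $\LL=\FF(\ZZ G)$, invoke Saltman's generic Galois extension results for $Z$-groups (under the cyclotomic hypothesis) to get retract rationality of $\FF(\ZZ G)^{G}/\FF$, use transitivity of retract rationality in towers, and descend to $\FF(M)^{G}$ via the faithfulness of $M$. The only cosmetic difference is that you descend using \thmref{reduction_by_permutation}(1) with the split sequence $0\to M\to M\oplus\ZZ G\to\ZZ G\to0$ where the paper invokes \propref{Masuda} directly; these are interchangeable here.
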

\begin{proof}
Given any invertible lattice $M$ and a $G$-Galois extension $\LL/\LL^{G}$,
Saltman showed that $\LL(M)^{G}/\LL^{G}$ is a retract rational extension
(\cite{saltman_retract_1984}, Theorem 3.14). In addition, if $Gal(\FF(\zeta)/\FF)$
is cyclic and $G$ is a $Z$-group, he proved that for $\LL=\FF(\ZZ G)$
the extension $\LL^{G}/\FF$ is also retract rational (see \cite{saltman_generic_1982}
,Theorems 2.1, 3.5, 5.3 where he proves that for such groups there
is a generic Galois extension and has a lifting property, and in \cite{saltman_retract_1984}
Theorem 3.12 he proves that the lifting property implies retract rationality).
In particular we get that $\FF\subseteq\FF(\ZZ G)^{G}\subseteq\FF(\ZZ G\oplus M)^{G}$
is a tower of retract rational extensions, which by \cite{kang_retract_2012}
means that $\FF(\ZZ G\oplus M)^{G}/\FF$ is also a retract rational
extension. Since $M$ is faithful and $\ZZ G$ is permutation, \propref{Masuda}
shows that $\FF(\ZZ G\oplus M)^{G}/\FF(M)^{G}$ is rational and it
follows that $\FF(M)^{G}/\FF$ is retract rational.
\end{proof}
In particular, since $Fl(G,E)$ is invertible, we conclude that$\FF(Fl(G,E)\oplus\ZZ G)^{G}/\FF$
is retract rational, thus proving part (1) of \thmref{Main_Theorem_sylow}.

Now that we know that $\FF(Fl(G))^{G}/\FF$ is a retract rational
extension given that $\FF$ contains enough roots of unity, we turn
to ask if it is a stably rational extension. One such result was given
by Snider in \cite{snider_is_1979}. Snider proved that if $G=\left\langle \sigma,\tau\;\mid\;\sigma^{n}=\tau^{2}=e,\;\tau\sigma\tau^{-1}=\sigma^{-1}\right\rangle $
is a dihedral group with $n$ odd and $\FF$ contains a primitive
$n$-th root of unity, then $\FF(Fl(G))^{G}/\FF$ is stably rational. 

As mentioned above, each $Z$-group $G$ is isomorphic to a semidirect
product of cyclic groups of coprime orders $C_{n}\rtimes C_{m}$.
Note that $C_{m}$ acts on $C_{n}$ by conjugation, and in the dihedral
case the action is faithful. In what follows, we shall prove that
we can always reduce the question to faithful actions.

Unless stated otherwise, for the rest of this section we fix the following
notations.

Let $G=\left\langle \sigma,\tau\;\mid\;\sigma^{n}=\tau^{m}=e,\;\tau^{-1}\sigma\tau=\sigma^{r}\right\rangle $
where $(n,m)=1$ and $r\in\ZZ$ satisfying $r^{m}\equiv_{n}1$. Let
$\left\langle \tau\right\rangle $ act on $\left\langle \sigma\right\rangle $
by conjugation and let $K=Z(G)\cap\left\langle \tau\right\rangle $
be the kernel of this action. Note that if $m_{0}$ is the order of
$r$ in $\nicefrac{\ZZ}{n\ZZ}$, then $K=\left\langle \tau^{m_{0}}\right\rangle $.
It follows that $\tilde{G}\cong\nicefrac{G}{K}=\left\langle \tilde{\sigma},\tilde{\tau}\;\mid\;\tilde{\sigma}^{n}=\tilde{\tau}^{m_{0}}=e,\;\tilde{\tau}\tilde{\sigma}\tilde{\tau}^{-1}=\tilde{\sigma}^{r}\right\rangle $
and $\left\langle \tilde{\tau}\right\rangle $ acts faithfully on
$\left\langle \tilde{\sigma}\right\rangle $. We also assume that
$\sigma^{r}\neq\sigma$, and in particular $n>2$, since otherwise
$G$ is cyclic. Finally, we let $M=Fl(G,\left\{ \sigma,\tau\right\} )$.
Our next goal is to show that we can move from $G$ to $\tilde{G}$
when computing the invariants.

As we have seen before, we have the split exact sequence
\[
\xymatrix{0\ar[r] & I\otimes M\ar[r] & \left(\ZZ\nicefrac{G}{\left\langle \sigma\right\rangle }\otimes M\right)\oplus\left(\ZZ\nicefrac{G}{\left\langle \tau\right\rangle }\otimes M\right)\ar[r]\sp(0.75)\varepsilon & M\ar[r] & 0}
.
\]
Let us investigate the projection $\varepsilon$ a little further.

From \lemref{reducing_to_subgroup} and \ref{lem:Move_to_subgroup}
we have the isomorphism
\[
\ZZ\nicefrac{G}{\left\langle \sigma\right\rangle }\otimes M\cong\ZZ G\otimes_{\left\langle \sigma\right\rangle }M_{\left\langle \sigma\right\rangle }\cong\ZZ G\otimes_{\left\langle \sigma\right\rangle }(P_{1}\oplus Fl(\left\langle \sigma\right\rangle ,\left\langle \sigma\right\rangle ))\cong\ZZ G\otimes_{\left\langle \sigma\right\rangle }\left(P_{1}\oplus P_{2}\oplus\ZZ\right)
\]
where $P_{1},P_{2}$ are free $\left\langle \sigma\right\rangle $-lattices.
Let $\hat{S}=e\otimes1$ in the right most term where $1\in\ZZ$.
The first and second isomorphisms from the right send it to $e\otimes S$
where $S$ is the flow on the cycle $e\to\sigma\to\sigma^{2}\to\cdots\to\sigma^{n-1}\to e$
which is sent by the last isomorphism to $\left\langle \sigma\right\rangle \otimes S$. 

Since $P_{1},P_{2}$ are free $\left\langle \sigma\right\rangle $-lattices,
we get that $\ZZ G\otimes_{\left\langle \sigma\right\rangle }\left(P_{1}\oplus P_{2}\oplus\ZZ\right)\cong Q\oplus\ZZ\nicefrac{G}{\left\langle \sigma\right\rangle }$
and $\hat{S}$ from above corresponds to $e\left\langle \sigma\right\rangle $
in $\ZZ\nicefrac{G}{\left\langle \sigma\right\rangle }$. By the previous
argument we have that $\varepsilon(\hat{S})=S$. A similar argument
is true for the subgroup $\left\langle \tau\right\rangle $. Putting
it all together we get the split exact sequence
\[
\xymatrix{0\ar[r] & I\otimes M\ar[r] & P\oplus\ZZ\nicefrac{G}{\left\langle \sigma\right\rangle }\oplus\ZZ\nicefrac{G}{\left\langle \tau\right\rangle }\ar[r]\sp(0.7)\varepsilon & M\ar[r] & 0}
\]
where $P$ is a $G$-free lattice. Additionally, denoting by $\hat{S},\hat{T}$
the elements $e\left\langle \sigma\right\rangle ,\; e\left\langle \tau\right\rangle $
in $\ZZ\nicefrac{G}{\left\langle \sigma\right\rangle },\ZZ\nicefrac{G}{\left\langle \tau\right\rangle }$
respectively and letting $S,T$ be the flows $e\to\sigma\to\cdots\to\sigma^{n-1}\to e$
and $e\to\tau\to\cdots\to\tau^{m-1}\to e$ respectively, we get that
$\varepsilon(\hat{S})=S$ and $\varepsilon(\hat{T})=T$. Our next
step is to find another such exact sequence which is easier to work
with.

Consider an exact sequence
\[
\xymatrix{0\ar[r] & \ker(\pi)\ar[r] & Q\oplus\ZZ\nicefrac{G}{\left\langle \sigma\right\rangle }\oplus\ZZ\nicefrac{G}{\left\langle \tau\right\rangle }\ar[r]\sp(0.7)\pi & M\ar[r] & 0}
\]
where the restriction of $\pi$ to $\ZZ\nicefrac{G}{\left\langle \sigma\right\rangle }\oplus\ZZ\nicefrac{G}{\left\langle \tau\right\rangle }$
coincides with $\varepsilon$, and $Q$ is some free $G$-lattice.
If $P'$ is any free $G$-lattice, $H$ is any subgroup of $G$ and
$i\in\ZZ$, then
\[
\hat{H}^{i}(P'\oplus\ZZ\nicefrac{G}{\left\langle \sigma\right\rangle }\oplus\ZZ\nicefrac{G}{\left\langle \tau\right\rangle })\cong\hat{H}^{i}(\ZZ\nicefrac{G}{\left\langle \sigma\right\rangle }\oplus\ZZ\nicefrac{G}{\left\langle \tau\right\rangle }),
\]
and since $\pi,\varepsilon$ coincide on $\ZZ\nicefrac{G}{\left\langle \sigma\right\rangle }\oplus\ZZ\nicefrac{G}{\left\langle \tau\right\rangle }$,
it follows that their induced map on the Tate cohomology groups also
coincide. The homomorphism $\varepsilon$ splits so its induced map
in cohomology is always surjective, and therefore also $\pi$'s induced
maps. Finally, we have that 
\[
\xymatrix{\hat{H}^{0}(H,P'\oplus\ZZ\nicefrac{G}{\left\langle \sigma\right\rangle }\oplus\ZZ\nicefrac{G}{\left\langle \tau\right\rangle })\ar[r] & \hat{H}^{0}(H,M)\ar[r] & \hat{H}^{1}(H,\ker(\pi))\ar[r] & \hat{H}^{1}(P'\oplus\ZZ\nicefrac{G}{\left\langle \sigma\right\rangle }\oplus\ZZ\nicefrac{G}{\left\langle \tau\right\rangle })=0}
\]
so that $\hat{H}^{1}(H,\ker(\pi))=0$ for all subgroups $H\leq G$,
or equivalently it is a coflasque lattice. Since $M$ is invertible
and $\ker(\pi)$ coflasque, we conclude from \lemref{splitting_invertibles}
that $\ker(\pi)\oplus M\cong\ZZ G\oplus\ZZ\nicefrac{G}{\left\langle \sigma\right\rangle }\oplus\ZZ\nicefrac{G}{\left\langle \tau\right\rangle }$
so that $\ker(\pi)$ is invertible and therefore $\left[M\right]^{fl}=\left[M\right]^{cofl}=\left[\ker(\pi)\right]$.

Let us consider the map $\pi$ defined as follows. Recall that $1\leq r\leq n-1$
is the integer such that $\sigma\tau=\tau\sigma^{r}$. Define the
flows $S,T,A$ to be
\begin{eqnarray*}
S & := & e\to\sigma\to\cdots\to\sigma^{n-1}\to e\\
T & := & e\to\tau\to\cdots\to\tau^{m-1}\to e\\
A & := & (e\to\sigma\to\sigma\tau)-(e\to\tau\to\tau\sigma\to\cdots\to\tau\sigma^{r}).
\end{eqnarray*}
Clearly we have that $\sigma(S)=S$ and $\tau(T)=T$. Let $\hat{S}=e\left\langle \sigma\right\rangle ,\hat{T}=e\left\langle \tau\right\rangle $
and $\hat{A}=e$ be generators for $\ZZ\nicefrac{G}{\left\langle \sigma\right\rangle },\ZZ\nicefrac{G}{\left\langle t\right\rangle }$
and $\ZZ G$ respectively (as $\ZZ G$-modules), and define $\pi:\ZZ\nicefrac{G}{\left\langle \sigma\right\rangle }\oplus\ZZ\nicefrac{G}{\left\langle \tau\right\rangle }\oplus\ZZ G\to M$
by setting $\pi(\hat{S})=S,\;\pi(\hat{T})=T$ and $\pi(\hat{A})=A$.
The maps $\pi$ and $\varepsilon$ coincide on $\ZZ\nicefrac{G}{\left\langle \sigma\right\rangle }\oplus\ZZ\nicefrac{G}{\left\langle \tau\right\rangle }$
so we are left to show that $\pi$ is surjective.

Denote by $N_{\sigma},N_{\tau}$ the norm elements $N_{\sigma}={\displaystyle \sum_{i=0}^{n-1}}\sigma^{i}$
and $N_{\tau}={\displaystyle \sum_{j=0}^{m-1}}\tau^{j}$. 
\begin{lem}
The homomorphism $\pi:\ZZ\nicefrac{G}{\left\langle \sigma\right\rangle }\oplus\ZZ\nicefrac{G}{\left\langle \tau\right\rangle }\oplus\ZZ G\to M$
is surjective and $\ker(\pi)$ is spanned by the elements
\begin{eqnarray*}
V_{g} & = & g\left(\sum_{j=0}^{m-1}\tau^{j}\sum_{i=0}^{r^{j}-1}\sigma^{i}(\hat{A})+\frac{r^{m}-1}{n}\cdot\left(\hat{S}\right)+\hat{T}-\sigma(\hat{T})\right)\\
U_{\tau^{j}} & = & \tau^{j}\left(N_{\sigma}(\hat{A})-\hat{S}+r\tau(\hat{S})\right)
\end{eqnarray*}
where $g\in G$ and $0\leq j\leq m-1$.\end{lem}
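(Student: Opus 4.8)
The plan is to prove the two assertions in turn: first that $\pi$ is onto, then that the listed elements generate $\ker(\pi)$. For surjectivity, note that the image of $\pi$ is the $\ZZ G$-submodule of $M=Fl(G,\{\sigma,\tau\})$ generated by $S,T,A$, since $\hat{S},\hat{T},\hat{A}$ generate the domain $\ZZ\nicefrac{G}{\langle\sigma\rangle}\oplus\ZZ\nicefrac{G}{\langle\tau\rangle}\oplus\ZZ G$ as a $\ZZ G$-module. Now $S,T,A$ are exactly the boundary cycles of the relators $\sigma^{n},\tau^{m}$ and $\sigma\tau\sigma^{-r}\tau^{-1}$ of the standard presentation of $G$, and their $G$-translates are the attaching maps of the $2$-cells of the Cayley complex of this presentation. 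That complex is the universal cover of the presentation complex, hence simply connected, so every $1$-cycle of its $1$-skeleton $Cay(G,\{\sigma,\tau\})$ is a $\ZZ$-combination of the boundaries $gS,gT,gA$. In flow language this says that any closed walk, corresponding to a word in $\sigma^{\pm1},\tau^{\pm1}$ trivial in $G$, is a product of conjugates of the relators and may therefore be redirected, one relator at a time, onto the cycles $gS,gT,gA$; hence these span $Fl(G,\{\sigma,\tau\})$ and $\pi$ is surjective.

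That $V_g,U_{\tau^j}\in\ker(\pi)$ is a direct telescoping computation with flows, and by $G$-equivariance it suffices to treat $V_e$ and $U_e$. For $U_e$ the key identity is $N_\sigma A=S-r\tau S$: summing the translates $\sigma^{i}A$ over $0\le i\le n-1$, the $\sigma$-edges assemble into the full cycle $S$ while the $\tau$-edges telescope to zero, and the length-$r$ $\sigma$-arcs starting at $\tau\sigma^{ir}$ concatenate (using $\gcd(r,n)=1$) into $r$ full loops around the coset $\tau\langle\sigma\rangle$, i.e.\ $r\tau S$; thus $\pi(U_e)=N_\sigma A-S+r\tau S=0$. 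For $V_e$ one expands $\pi(V_e)=\sum_{j}\tau^{j}\sum_{i=0}^{r^{j}-1}\sigma^{i}A+\frac{r^{m}-1}{n}S+T-\sigma T$ and uses $\sigma\tau^{j}=\tau^{j}\sigma^{r^{j}}$. The partial $\sigma$-sums telescope coset by coset: on each $\tau^{j}\langle\sigma\rangle$ with $1\le j\le m-1$ the $\sigma$-walk from the $j$-th block cancels the one from the $(j-1)$-st block, while on $\langle\sigma\rangle$ the surviving walk has length $r^{m}\equiv1\pmod n$, contributing $\frac{r^{m}-1}{n}$ loops $S$ that cancel the explicit term $\frac{r^{m}-1}{n}S$; the residual $\tau$-edges telescope to $\sigma T-\sigma T=0$ after cancelling $T$. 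Hence $\pi(V_e)=0$.

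It remains to prove that $N:=\sum_{g}\ZZ V_g+\sum_{j}\ZZ U_{\tau^{j}}\subseteq\ker(\pi)$ is all of $\ker(\pi)$, and the plan is to show the induced surjection $\bar\pi\colon(\ZZ\nicefrac{G}{\langle\sigma\rangle}\oplus\ZZ\nicefrac{G}{\langle\tau\rangle}\oplus\ZZ G)/N\to M$ is an isomorphism. Using the normal form $g=\tau^{j}\sigma^{i}$, the domain is generated by $\{\tau^{j}\sigma^{i}\hat{A}\}$, $\{\tau^{j}\hat{S}\}$ and $\{\sigma^{i}\hat{T}\}$. The relation $U_{\tau^{j}}$ solves for $\tau^{j}\sigma^{n-1}\hat{A}$ with coefficient $1$ in terms of the remaining $\hat{A}$'s and the $\hat{S}$'s, eliminating $m$ generators; the relations $\sigma^{k}V_e$ for $0\le k\le n-2$ solve successively for $\sigma^{k+1}\hat{T}$ with coefficient $1$, eliminating the $n-1$ generators $\sigma^{i}\hat{T}$ with $i\ge1$. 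This exhibits the $nm+1$ residual elements $B=\{\tau^{j}\sigma^{i}\hat{A}:0\le i\le n-2\}\cup\{\hat{T}\}\cup\{\tau^{j}\hat{S}\}$ as a generating set of the quotient. Since $\bar\pi$ surjects onto $M$, whose rank is $\left|E\right|-\left|V\right|+1=nm+1$, it suffices to check that $\bar\pi(B)=\{\tau^{j}\sigma^{i}A:0\le i\le n-2\}\cup\{T\}\cup\{\tau^{j}S\}$ is $\ZZ$-linearly independent in $M$; then $\bar\pi$ carries a generating set to a basis and is an isomorphism, giving $N=\ker(\pi)$.

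The independence of $\bar\pi(B)$ is the technical heart of the argument, and I would verify it via \lemref{basis_for_flows}. Choose the spanning tree of $Cay(G,\{\sigma,\tau\})$ consisting of the $\tau$-path $e\to\tau\to\cdots\to\tau^{m-1}$ together with the $\sigma$-paths $\tau^{j}\to\tau^{j}\sigma\to\cdots\to\tau^{j}\sigma^{n-1}$ inside each coset. The $nm+1$ non-tree edges are the $m$ closing $\sigma$-edges $(\tau^{j}\sigma^{n-1},\tau^{j})$, the edge $(\tau^{m-1},e)$, and the $m(n-1)$ $\tau$-edges at vertices with nonzero $\sigma$-exponent. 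Ordering the $A$-flows by decreasing $\sigma$-exponent, the matrix of values of $\bar\pi(B)$ on these non-tree edges becomes block lower-triangular: the $\tau^{j}S$ give the identity on the closing $\sigma$-edges and vanish on the $\tau$-edges, $T$ contributes a single $1$ on $(\tau^{m-1},e)$, and each $\tau^{j}\sigma^{i}A$ contributes $+1$ on the $\tau$-edge at $\tau^{j}\sigma^{i+1}$ and $-1$ at $\tau^{j}\sigma^{i}$, making the $\tau$-block bidiagonal with $\pm1$ diagonal, so the determinant is $\pm1$. The one delicate point is that the length-$r$ arcs of the $A$-flows may wrap onto some closing $\sigma$-edges, but these entries sit strictly off the relevant diagonal blocks and do not affect unimodularity; checking this, together with the bookkeeping for $\pi(V_e)=0$, are the only genuinely subtle steps. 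Conceptually $\ker(\pi)$ is the module of identities among the relations of $G$ reduced by the cyclic symmetries of $\sigma^{n}$ and $\tau^{m}$, which is why the two families $V_g,U_{\tau^{j}}$---of total rank $m+n-1$, matching the rank of $\ker(\pi)$ forced by the splitting $\ker(\pi)\oplus M\cong\ZZ G\oplus\ZZ\nicefrac{G}{\langle\sigma\rangle}\oplus\ZZ\nicefrac{G}{\langle\tau\rangle}$---suffice.
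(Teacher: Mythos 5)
Your proof is correct, and while it establishes the same three facts as the paper's argument (surjectivity, membership of $V_g,U_{\tau^j}$ in $\ker(\pi)$, and exhaustion of the kernel), the two non-routine steps are carried out by genuinely different means. For surjectivity the paper redirects an arbitrary flow edge by edge, using translates of $A$ to empty the $\tau$-edges until only combinations of the cycles $gS$ and $T$ remain; you instead invoke simple connectivity of the Cayley $2$-complex of the presentation $\langle\sigma,\tau\mid\sigma^{n},\tau^{m},\sigma\tau\sigma^{-r}\tau^{-1}\rangle$, which identifies $Fl(G,\{\sigma,\tau\})=Z_{1}$ with $B_{1}=\mathrm{span}\{gS,gT,gA\}$ in one stroke. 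For the kernel, the paper (which only sketches the case $G=S_{3}$ and leaves the general case to the reader) normalizes an arbitrary element of $\ker(\pi)$ modulo $N$: it first forces the coefficients of $\tau^{j}\sigma^{i}\hat{A}$ to be constant in $i$ by inspecting the $\tau$-edges, then applies the $U_{\tau^{j}}$ and concludes from the independence of $T,\{\tau^{j}S\}$. You argue globally instead: the relations eliminate $m+n-1$ generators of the quotient, leaving $nm+1=\mathrm{rank}(M)$ generators whose images are shown to form a basis of $M$ via \lemref{basis_for_flows} and an explicit spanning tree. The underlying combinatorics is identical --- your bidiagonal $\tau$-block is precisely the paper's observation that only $\tau^{j}\sigma^{i-1}(A)$ and $\tau^{j}\sigma^{i}(A)$ touch the $\tau$-edge at $\tau^{j}\sigma^{i}$ --- but your rank-and-unimodularity packaging buys a cleaner general-case treatment: it sidesteps the integral bookkeeping of the normal form (note the coefficient $r$ in front of $\tau(\hat{S})$ in $U_{\tau^{j}}$, which makes eliminating the $\tau^{j}\hat{S}$ individually delicate), and it re-proves surjectivity for free, since the distinguished set $B$ already lies in the image of $\pi$. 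The one point you must write out in full is the wrap-around of the length-$r$ arcs of the $A$-flows onto the closing $\sigma$-edges, which you correctly place in an off-diagonal block where it cannot spoil unimodularity.
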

\begin{proof}
For convenience, we sketch the proof for $S_{3}\cong C_{3}\rtimes C_{2}$.
The general proof is a similar and is left to the reader.

The Cayley graph of $S_{3}$ with respect to $\left\{ \sigma,\tau\right\} $
is

{\footnotesize{}
\[
\xymatrix{e\ar[r] & \sigma\ar[r] & \sigma^{2}\ar[r] & e\ar[r] & \sigma\ar[r] & \sigma^{2}\ar[r] & e\ar[r] & \sigma\ar[r] & \sigma^{2}\ar[r] & e\ar[r] & \sigma\ar[r] & \sigma^{2}\ar[r] & e\\
\tau\ar[rr]^{\tau(A)}\ar[u] &  & \tau\sigma\ar[rr]^{\tau\sigma(A)}\ar[u] &  & \tau\sigma^{2}\ar[rr]^{\tau\sigma^{2}(A)}\ar[u] &  & \tau\ar[rr]^{\tau(A)}\ar[u] &  & \tau\sigma\ar[rr]^{\tau\sigma(A)}\ar[u] &  & \tau\sigma^{2}\ar[rr]^{\tau\sigma^{2}(A)}\ar[u] &  & \tau\ar[u]\\
e\ar[rrrr]^{A}\ar[u] &  &  &  & \sigma\ar[rrrr]^{\sigma(A)}\ar[u] &  &  &  & \sigma^{2}\ar[rrrr]^{\sigma^{2}(A)}\ar[u] &  &  &  & e\ar[u]
}
\]
}Note that some of the vertices and edges appear more than one time
to make the illustration clearer. 

By definition, $S,T$ and $A$ are in $Im(\pi)$ , so to show surjectivity
it is enough to show that they generate $M$.

Suppose that $a\in M$ is a flow which contains the edge $\sigma^{2}\to\tau\sigma$
with coefficient $\lambda$. We can remove this edge by moving to
$a-\lambda\sigma(A)$. Similarly we can remove the edge $\sigma\to\tau\sigma^{2}$
with the use of $A$, and without adding back the edge $\sigma^{2}\to\tau\sigma$.
We can do the same trick in the top row and therefore we are left
only with the edges of the form $\sigma^{i}\to\sigma^{i+1}$, $\tau\sigma^{i}\to\tau\sigma^{i+1}$
and $\tau^{j}\to\tau^{j+1}$. Clearly, this flow is a linear combination
of $S,\tau(S)$ (horizontal lines) and $T$ (left vertical line),
concluding that $a\in Im(\pi)$. 

We now turn to study the kernel $\ker(\pi)$. Clearly the norms $N_{\sigma},N_{\tau}$
satisfy $N_{\sigma}\sigma=\sigma N_{\sigma}=N_{\sigma}$ and $N_{\tau}\tau=\tau N_{\tau}=N_{\tau}$.
Since $\left\langle \sigma\right\rangle $ is normal in $G$, the
norm $N_{\sigma}$ is central in $\ZZ G$. 

By definition, the flow $A$ has $+1$ on the right and bottom edges
and $-1$ on the left and top edges. We thus see that $A+\sigma(A)+\sigma^{2}(A)$
is zero on the edges of degree $\tau$ (which point up) and is making
one cycle $e\to\sigma\to\sigma^{2}\to e$ on the bottom lines and
twice the cycle $\tau\to\tau\sigma\to\tau\sigma^{2}\to\tau$ in the
middle with a minus sign. In other words we have that $A+\sigma(A)+\sigma^{2}(A)=S-2\tau(S)$,
so that $U_{\tau^{0}}\in\ker(\pi)$. Similarly, the sum $A+\tau(A)+\tau\sigma(A)$
equals the square with the edges $e\to\sigma$, $\sigma(T)=\sigma\to\tau\sigma^{2}\to\sigma$
with plus sign and the edges $T=e\to\tau\to e$ and $e\to\sigma\to\sigma^{2}\to e\to\sigma$
with a minus sign. The top and the bottom edges gives us one cycle
$e\to\sigma\to\sigma^{2}\to e$ (where $1=\frac{r^{m}-1}{n}$ for
$r=m=2$ and $n=3$). We conclude that $A+\left(\tau(A)+\tau\sigma(A)\right)=\sigma(T)-T-S$
and therefore $V_{e}\in\ker(\pi)$.

Let $N=span\left\{ U_{g},V_{\tau^{j}}\;\mid\; g\in G,\;0\leq j\leq m-1\right\} $.
If $a\in\nicefrac{\ker(\pi)}{N}$, then using the element $U_{\tau^{j}}$
and $V_{g}$ we can find a representative for it of the form 
\[
x=\sum_{j=0}^{m-1}\left(\sum_{i=0}^{n-1}\alpha_{j,i}\tau^{j}\sigma^{i}(\hat{A})\right)+\beta\cdot\hat{T}+\gamma\cdot\hat{S}
\]
where $\alpha_{j},\beta,\gamma\in\ZZ$. For any $0\leq j\leq m-1$
and $1\leq i\leq n-1$, the only elements in $\left\{ \tau^{j_{1}}\sigma^{i_{1}}A\right\} \cup\left\{ S,T\right\} $
that touch the edge $(\tau^{j}\sigma^{i},\tau^{j}\sigma^{i}\tau)$
are $\tau^{j}\sigma^{i}(A)$ (with a minus sign) and $\tau^{j}\sigma^{i-1}A$
(with a plus sign). We conclude that $\alpha_{j,i}=\alpha_{j,i-1}$,
so setting $\alpha_{j}=\alpha_{j,0}$ we see that
\[
x=\sum_{j=0}^{m-1}\alpha_{j}\tau^{j}N_{\sigma}(\hat{A})+\beta\cdot\hat{T}+\gamma\cdot\hat{S}.
\]
Using the elements $U_{\tau^{j}}$ we can find another representative
of the form
\[
x'=\sum_{j=0}^{m-1}\alpha_{j}\tau^{j}\left(\hat{S}-r\tau\hat{S}\right)+\beta\cdot\hat{T}+\gamma\cdot\hat{S}=\beta\hat{T}+\sum_{j=1}^{m-1}\left(\alpha_{j}-r\alpha_{j-1}\right)\tau^{j}\hat{S}+\left(\alpha_{0}-r\alpha_{m-1}+\gamma\right)\hat{S}.
\]
Since $\pi(x')=0$ and the elements $T,\left\{ \tau^{j}S\right\} _{1}^{m}$
are linearly independent, $x'$ must be zero and therefore $\ker(\pi)=N$. 
\end{proof}

We now turn to study the structure of $\ker(\pi)$. Recall that 
\begin{eqnarray*}
V_{g} & = & g\left({\displaystyle \sum_{j=0}^{m-1}}\tau^{j}\sum_{i=0}^{r^{j}-1}\sigma^{i}(\hat{A})+\frac{r^{m}-1}{n}\cdot\left(\hat{S}\right)+\hat{T}-\sigma(\hat{T})\right)\\
U_{\tau^{j}} & = & \tau^{j}\left(N_{\sigma}(\hat{A})-\hat{S}+r\tau(\hat{S})\right).
\end{eqnarray*}
The elements $\left\{ \tau^{j}N_{\sigma}(\hat{A})\right\} _{j=0}^{m-1}$
are $\ZZ$ linearly independent and $U_{e}$ is $\sigma$ invariant,
hence $M_{0}=span_{\ZZ}\left\{ U_{\tau^{j}}\right\} $ is a submodule
of $\ker(\pi)$ isomorphic to $\ZZ\nicefrac{G}{\left\langle \sigma\right\rangle }$.
Obviously $M_{0}\subseteq\ker(\pi)\cap\left\langle \hat{A},\hat{S}\right\rangle _{\ZZ G}$
and we wish to show that this is actually an equality.
\begin{lem}
There is an equality $M_{0}=\ker(\pi)\cap\left\langle \hat{A},\hat{S}\right\rangle _{\ZZ G}$
and $\nicefrac{\ker(\pi)}{M_{0}}\cong I_{G/\left\langle \tau\right\rangle }$.\end{lem}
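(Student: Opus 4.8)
The plan is to deduce both assertions from a single structural fact: that $M_{0}$ is a \emph{pure} (saturated) sublattice of $\ker(\pi)$, i.e.\ that $\ker(\pi)/M_{0}$ is torsion free. I would track this with two coordinate projections of the middle term $P=\ZZ\nicefrac{G}{\left\langle \sigma\right\rangle }\oplus\ZZ\nicefrac{G}{\left\langle \tau\right\rangle }\oplus\ZZ G$ generated by $\hat{S},\hat{T},\hat{A}$, namely the projection $\rho\colon P\to\ZZ\nicefrac{G}{\left\langle \tau\right\rangle }$ onto the $\hat{T}$-summand and the projection $q\colon P\to\ZZ G$ onto the $\hat{A}$-summand.

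I would first dispose of the formal half using $\rho$. By construction $\ker(\rho|_{\ker\pi})=\ker(\pi)\cap\left\langle \hat{A},\hat{S}\right\rangle _{\ZZ G}$, and since the generators $U_{\tau^{j}}$ have vanishing $\hat{T}$-component the inclusion $M_{0}\subseteq\ker(\pi)\cap\left\langle \hat{A},\hat{S}\right\rangle _{\ZZ G}$ is immediate. From $V_{g}=gV_{e}$ and $U_{\tau^{j}}=\tau^{j}U_{e}$ one reads off $\rho(V_{e})=\hat{T}-\sigma\hat{T}$ and $\rho(U_{e})=0$, so the image of $\rho|_{\ker\pi}$ is $\ZZ G(\hat{T}-\sigma\hat{T})$; as $(\tau-1)\hat{T}=0$, the augmentation ideal $I_{G/\left\langle \tau\right\rangle }$ is generated over $\ZZ G$ by $(\sigma-1)\hat{T}$, so this image is exactly $I_{G/\left\langle \tau\right\rangle }$. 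Combining $rank\,P=nm+n+m$ with $rank\,M=\left|G\right|+1=nm+1$ gives $rank\,\ker(\pi)=m+n-1$, whence both $M_{0}$ and $\ker(\pi)\cap\left\langle \hat{A},\hat{S}\right\rangle _{\ZZ G}=\ker(\rho|_{\ker\pi})$ have rank $m$.

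The substance of the proof is the purity, which I would extract from $q$. First, $q|_{\ker\pi}$ is injective: its kernel is $\ker(\pi)\cap\left\langle \hat{S},\hat{T}\right\rangle _{\ZZ G}$, and $\pi$ carries the basis $\left\{ \tau^{j}\hat{S}\right\} \cup\left\{ \sigma^{i}\hat{T}\right\} $ of $\left\langle \hat{S},\hat{T}\right\rangle _{\ZZ G}$ to the cycles $\left\{ \tau^{j}S\right\} \cup\left\{ \sigma^{i}T\right\} $, which are $\ZZ$-linearly independent because they have pairwise disjoint edge supports ($\sigma$-edges versus $\tau$-edges, and the distinct cosets $\tau^{j}\left\langle \sigma\right\rangle $ and $\sigma^{i}\left\langle \tau\right\rangle $). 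Next $q(U_{e})=N_{\sigma}$, so $q(M_{0})=\ZZ G\,N_{\sigma}$, and this sublattice is pure in $\ZZ G$: in the basis $\left\{ \tau^{a}\sigma^{b}\right\} $ it consists precisely of the elements whose coefficients are constant along each coset $\tau^{a}\left\langle \sigma\right\rangle $, a condition unaffected by dividing by a nonzero integer. Purity passes to the intermediate lattice $q(\ker\pi)$, and transporting back through the isomorphism $q|_{\ker\pi}$ shows that $M_{0}$ is pure in $\ker(\pi)$.

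It remains to assemble the pieces. Being pure in $\ker(\pi)$, the lattice $M_{0}$ is pure in $\ker(\pi)\cap\left\langle \hat{A},\hat{S}\right\rangle _{\ZZ G}$; since the two have the same rank $m$, their quotient is at once torsion (equal rank) and torsion free (purity), hence zero, which proves the first equality. The second assertion then follows from the first isomorphism theorem applied to $\rho|_{\ker\pi}$, whose kernel is now $M_{0}$ and whose image was identified with $I_{G/\left\langle \tau\right\rangle }$. I expect the one genuinely geometric point to be the injectivity of $q|_{\ker\pi}$, i.e.\ the linear independence of the $\sigma$- and $\tau$-cycles in the Cayley graph; the remaining ingredients are the short coefficient computation for purity of $\ZZ G\,N_{\sigma}$ and the bookkeeping of ranks.
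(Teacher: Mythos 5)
Your proof is correct, but it reaches the key equality $M_{0}=\ker(\pi)\cap\left\langle \hat{A},\hat{S}\right\rangle _{\ZZ G}$ by a genuinely different route than the paper. The paper argues by direct elimination: it takes an arbitrary $x$ in the intersection, observes (by looking at the degree-$\tau$ edges $(\tau^{j}\sigma^{i},\tau^{j}\sigma^{i}\tau)$) that the coefficients of $\tau^{j}\sigma^{i}(\hat{A})$ are constant in $i$ for each fixed $j$, reduces $x$ modulo $M_{0}$ to a combination of the $\tau^{j}\hat{S}$, and kills that combination using the linear independence of the cycles $\tau^{j}S$. You instead combine a rank count ($\operatorname{rank}\ker(\pi)=n+m-1$, so both lattices have rank $m$) with purity of $M_{0}$ in $\ker(\pi)$, which you obtain by pushing everything through the injective projection $q$ onto the $\ZZ G$-summand and invoking the purity of $\ZZ G\,N_{\sigma}$ in $\ZZ G$. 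Both arguments ultimately rest on the same geometric input — the linear independence of the $\sigma$- and $\tau$-cycles, which you need for the injectivity of $q|_{\ker\pi}$ and the paper needs to finish its elimination — but your version trades the explicit coefficient manipulation for the structural observation about the norm sublattice, and as a bonus it makes transparent why no torsion can appear in $\ker(\pi)/M_{0}$. The identification of the quotient with $I_{G/\left\langle \tau\right\rangle }$ via $\rho$ is essentially the paper's argument (it phrases it as an embedding of $\ker(\pi)/M_{0}$ into $P/\left\langle \hat{A},\hat{S}\right\rangle \cong\ZZ\nicefrac{G}{\left\langle \tau\right\rangle }$ rather than as the image of a projection, but these are the same map). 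One small point worth making explicit in your write-up: the equality $q(M_{0})=\ZZ G\,N_{\sigma}$ and the injectivity of $q|_{\ker\pi}$ both depend on the previous lemma's description of $\ker(\pi)$ and on the fact that $N_{\sigma}$ is central, so that $g N_{\sigma}=\tau^{j}N_{\sigma}$; you use this silently.
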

\begin{proof}
Let $x\in\ker(\pi)\cap\left\langle \hat{A},\hat{S}\right\rangle _{\ZZ G}$.
As in the previous lemma, the coefficients of $\tau^{j}\sigma^{i}(A)$
for $j$ fixed are the same, so $x$ has the form 
\[
{\displaystyle \sum_{0}^{m-1}}\alpha_{j}\tau^{j}N_{\sigma}\hat{A}+{\displaystyle \sum_{j=0}^{m-1}}\beta_{j}\tau^{j}\hat{S}\equiv_{M_{0}}{\displaystyle \sum_{0}^{m-1}}\alpha_{j}\tau^{j}\left(\hat{S}-r\tau(\hat{S})\right)+{\displaystyle \sum_{j=0}^{m-1}}\beta_{j}\tau^{j}\hat{S}=\sum_{0}^{m-1}\left(\alpha_{j}-r\alpha_{j-1}+\beta_{j}\right)\tau^{j}\hat{S}
\]
where the subtraction in the indices is modulo $m$. Since $\pi(x)=0$
and $\left\{ \tau^{j}S\right\} _{1}^{m}$ are linearly independent,
we get that $x\equiv_{M_{0}}0$, so $M_{0}=\ker(\pi)\cap\left\langle \hat{A},\hat{S}\right\rangle _{\ZZ G}$.
Since
\[
\nicefrac{\ker(\pi)}{M_{0}}\leq\nicefrac{\ZZ G\oplus\ZZ\nicefrac{G}{\left\langle \sigma\right\rangle }\oplus\ZZ\nicefrac{G}{\left\langle \tau\right\rangle }}{\left\langle \hat{A},\hat{S}\right\rangle }\cong\ZZ\nicefrac{G}{\left\langle \tau\right\rangle }
\]
and $\nicefrac{\ker(\pi)}{M_{0}}$ is generated by the images $\left\{ g(\hat{T}-\sigma\left(\hat{T}\right))\right\} _{g\in G}$
of $\left\{ V_{g}\right\} _{g\in G}$, it follows that $\nicefrac{\ker(\pi)}{M_{0}}\cong I_{G/\left\langle \tau\right\rangle }$.
\end{proof}
Let $P$ be any permutation $G$-lattice with a surjection $\psi:P\to I_{G/\left\langle \tau\right\rangle }$.
We have the following exact diagram
\[
\xymatrix{ &  & 0\ar[d] & 0\ar[d]\\
 &  & \ker(\psi)\ar[d]\ar[r]^{=} & \ker(\psi)\ar[d]\\
0\ar[r] & \ZZ\nicefrac{G}{\left\langle \sigma\right\rangle }\ar[d]_{=}\ar[r] & Q\ar[r]\ar[d] & P\ar[d]^{\psi}\ar[r] & 0\\
0\ar[r] & \ZZ\nicefrac{G}{\left\langle \sigma\right\rangle }\ar[r] & \ker\left(\pi\right)\ar[r]\ar[d] & I_{G/\left\langle \tau\right\rangle }\ar[r]\ar[d] & 0\\
 &  & 0 & 0
}
\]
where $Q=\ker(\pi)\times_{I_{G/\left\langle \tau\right\rangle }}P$.
Since $P$ and $\ZZ\nicefrac{G}{\left\langle \sigma\right\rangle }$
are permutation lattices, the middle row splits by \lemref{splitting_invertibles}
and hence $Q$ is a permutation lattice as well. The lattice $\ker(\pi)$
is invertible so the middle column is a flasque resolution, hence
$\left[M\right]^{fl}=\left[\ker(\pi)\right]=\left[\ker(\psi)\right]^{fl}$.
On the other hand, the exact sequence on the right column corresponds
to flows on the graph with edges $V=\nicefrac{G}{\left\langle \tau\right\rangle }$.
Using \thmref{reduction_by_flasque} we conclude the following.
\begin{thm}
\label{thm:faithful_graphs}The field $\FF(\ZZ G\oplus Fl(G))^{G}$
is stably isomorphic to $\FF(\ZZ G\oplus Fl(\nicefrac{G}{\left\langle \tau\right\rangle }))^{G}$
over $\FF$.
\end{thm}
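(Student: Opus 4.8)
The plan is to reduce the claim to an equality of flasque invariants and then quote the flasque reduction theorem \thmref{reduction_by_flasque}. Writing $M=Fl(G)$, the preceding discussion already supplies the short exact sequence $0\to\ker(\pi)\to\ZZ G\oplus\ZZ\nicefrac{G}{\langle\sigma\rangle}\oplus\ZZ\nicefrac{G}{\langle\tau\rangle}\xrightarrow{\pi}M\to0$ with $\ker(\pi)$ invertible and $[M]^{fl}=[\ker(\pi)]$, together with the short exact sequence $0\to\ZZ\nicefrac{G}{\langle\sigma\rangle}\to\ker(\pi)\to I_{G/\langle\tau\rangle}\to0$ furnished by the two preceding lemmas. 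The only genuinely new task is therefore to connect $\ker(\pi)$ with $Fl(\nicefrac{G}{\langle\tau\rangle})$, and the cleanest route is through the pullback diagram displayed just above the statement.

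First I would fix a connected $G$-graph on the vertex set $V=\nicefrac{G}{\langle\tau\rangle}$ and take $P=\ZZ E$ together with the boundary map $\psi=\partial\colon P\to I_{G/\langle\tau\rangle}$, so that the defining sequence of flows identifies $\ker(\psi)=Fl(\nicefrac{G}{\langle\tau\rangle})$. Forming the pullback $Q=\ker(\pi)\times_{I_{G/\langle\tau\rangle}}P$ yields the exact diagram, whose top row is $0\to\ZZ\nicefrac{G}{\langle\sigma\rangle}\to Q\to P\to0$. Here $P$ is a permutation lattice, hence invertible, while $\ZZ\nicefrac{G}{\langle\sigma\rangle}$ is a permutation lattice, hence coflasque; so \lemref{splitting_invertibles} forces this row to split and $Q$ to be a permutation lattice.

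With $Q$ permutation in hand, the key observation is that the middle column $0\to Fl(\nicefrac{G}{\langle\tau\rangle})\to Q\to\ker(\pi)\to0$ is a flasque resolution of $Fl(\nicefrac{G}{\langle\tau\rangle})$: its middle term $Q$ is permutation and its cokernel $\ker(\pi)$ is invertible, hence flasque. Consequently $[Fl(\nicefrac{G}{\langle\tau\rangle})]^{fl}=[\ker(\pi)]=[M]^{fl}=[Fl(G)]^{fl}$. To pass to faithful lattices I would add the permutation summand $\ZZ G$; since a permutation lattice has trivial flasque class, $[\ZZ G\oplus Fl(G)]^{fl}=[\ZZ G\oplus Fl(\nicefrac{G}{\langle\tau\rangle})]^{fl}$, and both lattices are faithful because $\ZZ G$ is. As $\FF$ carries the trivial $G$-action, condition (1) of \thmref{reduction_by_flasque} then applies and gives $\FF(\ZZ G\oplus Fl(G))^{G}\approx\FF(\ZZ G\oplus Fl(\nicefrac{G}{\langle\tau\rangle}))^{G}$. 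I expect the main obstacle to be precisely the middle step, namely checking that $Q$ is permutation so that the middle column qualifies as a genuine flasque resolution; the faithfulness bookkeeping through $\ZZ G$ and the final appeal to \thmref{reduction_by_flasque} are routine.
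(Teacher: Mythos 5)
Your proposal is correct and follows essentially the same route as the paper: the same pullback $Q=\ker(\pi)\times_{I_{G/\left\langle \tau\right\rangle }}P$ with $P=\ZZ E$ and $\psi=\partial$, the same appeal to \lemref{splitting_invertibles} to see that $Q$ is permutation, the same identification of the middle column as a flasque resolution giving $\left[Fl(\nicefrac{G}{\left\langle \tau\right\rangle })\right]^{fl}=\left[\ker(\pi)\right]=\left[Fl(G)\right]^{fl}$, and the same final appeal to \thmref{reduction_by_flasque}. The only difference is that you spell out the $\ZZ G$ faithfulness bookkeeping that the paper leaves implicit, which is a welcome clarification rather than a deviation.
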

While the action of $G$ on $Cay(G,G)$ is faithful, its action on
a graph with vertices $\nicefrac{G}{\left\langle \tau\right\rangle }$
has kernel at least $Z(G)\cap\left\langle \tau\right\rangle $. As
we shall see in the next section, this kernel can be mod out, namely
that $\FF(\ZZ G\oplus Fl(\nicefrac{G}{\left\langle \tau\right\rangle }))^{G}$
is stably isomorphic to $\FF(\ZZ\nicefrac{G}{\left\langle \tau\right\rangle }\oplus Fl(\nicefrac{G}{\left\langle \tau\right\rangle }))^{G/\left\langle \tau\right\rangle }$,
thus proving part (2) of \thmref{Main_Theorem_sylow}.
\begin{rem}
Let $G=\left\langle \sigma,\tau\;\mid\;\sigma^{n}=\tau^{2m}=e,\;\tau\sigma\tau^{-1}=\sigma^{-1}\right\rangle $
with $(n,2m)=1$ so that $\nicefrac{G}{Z(G)\cap\left\langle \tau\right\rangle }\cong D_{2n}$
is a dihedral group where $n$ is odd. Recall that Snider proved that
if $\FF$ contains a primitive $2n$ root of unity, then $\FF(D_{2n})^{D_{2n}}/\FF$
is stably rational, hence $\FF(G)^{G}/\FF$ is also stably rational.
In \cite{endo_classification_1975}, Endo and Miyata also proved that
for such groups the lattice $I_{G}^{*}$ is a quasi-permutation lattice,
which is equivalent to $Fl(G,S)$ being stably permutation. In particular,
this means that $\FF(G)^{G}=\FF(Fl(G,S)\oplus\ZZ G)^{G}$ is stably
isomorphic to $\FF(\ZZ G)^{G}$, so that $\FF(\ZZ G)^{G}/\FF$ is
a stably rational extension. 
\end{rem}

\subsection{\label{sub:Semidirect-product}Semidirect product of abelian groups}

Let $G=N\rtimes H$ be a semidirect product of abelian groups. In
this case we consider the set of edges $V=\nicefrac{G}{H}$, with
some fixed choice of copy of $H$ in $G$. The $G$-action on $V$
is not necessarily faithful. More precisely, if $h\in G$ is in the
kernel, then it must be in $H$, and for every $g\in N$ we also have
\[
gH=h\left(gH\right)=hgh^{-1}H
\]
where $hgh^{-1}\in N$, so that $g=hgh^{-1}$. It follows that the
kernel is exactly $H_{0}:=Z(G)\cap H$. Setting $\tilde{G}=\nicefrac{G}{H_{0}}$,
$\tilde{H}=\nicefrac{H}{H_{0}}$, we see that the $G$-action on $V$
factors through $\tilde{G}$, and as such the structure of $V$ is
$\tilde{V}=\nicefrac{\tilde{G}}{\tilde{H}}$.

Let $E_{-}$ be the set of all directed edges on $V\cong\tilde{V}$
which are not loops. The graphs $X=(\nicefrac{G}{H},E_{-}(G))$ and
$\tilde{X}=(\nicefrac{\tilde{G}}{\tilde{H}},E_{-}(\tilde{G}))$ are
isomorphic and the action of $G$ on $X\cong\tilde{X}$ is factored
through $\tilde{G}$, and it follows that $\FF(Fl(X))^{G}=\left(\FF(Fl(X))^{H_{0}}\right)^{\tilde{G}}=\FF(Fl(\tilde{X}))^{\tilde{G}}$
under the identification of $X$ and $\tilde{X}$. 

Recall that our usual setting is $\FF(Fl(X)\oplus\ZZ G)^{G}$ and
we ask whether it is rational over $\FF(Fl(X))^{G}$. Writing $\KK=\FF(Fl(X))$,
so that $G$ act on $\KK$ with kernel $H_{0}$, the question is the
rationality of $\KK(\ZZ G)^{G}/\KK^{G}$. Two standard reasons used
to show the rationality of such extensions are \propref{Masuda},
where we need $G$ to act faithfully on $\KK$, and Fischer's theorem
where we need $G$ to be abelian of some exponent $m$, $\KK^{G}=\KK$
and that $\KK$ contains a primitive root of unity of order $m$.
While both of these conditions aren't true in this case, the next
theorem shows how to combine them.
\begin{lem}
\label{lem:Fischer-generalize}Let $G=N\rtimes H$ be a semidirect
product with $H$ abelian and set $H_{0}=Z(G)\cap H$ - the kernel
of the $H$-action on $N$. Let $G$ act on a field $\KK$ with kernel
$H_{0}$, and assume that $\KK^{G}$ contains a primitive root of
unity of order $\left|H\right|$. Then $\KK(\ZZ\nicefrac{G}{N})^{G}$
is rational over $\KK^{G}$, and in particular $\KK(\ZZ G)^{G}$ is
stably rational over $\KK^{G}$.\end{lem}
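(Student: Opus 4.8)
The difficulty is that the two tools available for such invariant fields pull in opposite directions: \propref{Masuda} needs $G$ to act \emph{faithfully} on the coefficient field, whereas Fischer's theorem (see \remref{Noether-Problem}) needs $G$ to act \emph{trivially} on it while the field supplies enough roots of unity. Here $G$ acts on $\KK$ with kernel exactly $H_{0}$, so neither applies directly, and the whole point is to interleave them. My first move is to split off $N$. Since $\ZZ\nicefrac{G}{N}$ is inflated from $H=\nicefrac{G}{N}$, the group $N$ acts trivially on the lattice, so taking $N$-invariants only touches the coefficients and gives $\KK(\ZZ\nicefrac{G}{N})^{N}=\KK^{N}(\ZZ\nicefrac{G}{N})$. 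Writing $\LL=\KK^{N}$ and using $N\trianglelefteq G$, the problem becomes showing that $\LL(\ZZ H)^{H}$ is rational over $\LL^{H}=\KK^{G}$, where now $H$ is \emph{abelian}, acts on $\ZZ H$ by the regular representation, and acts on $\LL$ with kernel $H_{0}$, so that $\LL/\LL^{H}$ is an abelian $\nicefrac{H}{H_{0}}$-Galois extension for which $\LL^{H}$ contains a primitive $\left|H\right|$-th root of unity.

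\textbf{Main construction.} On this reduced problem I would diagonalize the permutation lattice $\ZZ H$ using the roots of unity, exactly as in Fischer's argument: for each character $\psi\in\hat{H}$ set $v_{\psi}=\sum_{h\in H}\psi(h)y_{h}$, so that $\LL(\ZZ H)=\LL(v_{\psi}\;:\;\psi\in\hat{H})$ and $g(v_{\psi})=\psi(g)^{-1}v_{\psi}$, the coefficients $\psi(h)\in\mu_{\left|H\right|}\subseteq\LL^{H}$ being $H$-fixed. Simultaneously I would feed in the Galois side: since $\nicefrac{H}{H_{0}}$ is abelian and $\LL^{H}$ contains enough roots of unity, Kummer theory produces, for every $\psi\in\hat{H}$ with $\psi|_{H_{0}}=1$, an element $\lambda_{\psi}\in\LL^{\times}$ with $g(\lambda_{\psi})=\psi(g)\lambda_{\psi}$, chosen multiplicatively in $\psi$. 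Then $v_{\psi}\lambda_{\psi}$ is genuinely $H$-invariant whenever $\psi$ is trivial on $H_{0}$, while for the remaining characters one cancels the $H_{0}$-part by passing to monomials $\prod_{\psi}v_{\psi}^{n_{\psi}}$ whose total character restricts trivially to $H_{0}$. The exponent vectors $(n_{\psi})$ that can be made invariant are precisely $\ker\big(\ZZ^{\hat{H}}\to\hat{H_{0}}\big)$, the kernel of restriction; because $\hat{H_{0}}$ is finite this kernel has full rank $\left|H\right|$, matching $\mbox{trdeg}(\LL(\ZZ H)^{H}/\LL^{H})=\left|H\right|$.

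\textbf{Main obstacle.} The substantive step is to prove that these invariant monomials (twisted by the $\lambda_{\psi}$) actually \emph{generate} all of $\LL(\ZZ H)^{H}$ and are algebraically independent, so that the invariant field is purely transcendental over $\LL^{H}$. This is where the Fischer part and the Masuda part must be shown to combine cleanly: the lattice supplies characters of all of $H$, including ones nontrivial on $H_{0}$, whereas the Galois action of $\nicefrac{H}{H_{0}}$ supplies only characters trivial on $H_{0}$, and one must check that every invariant of the diagonal action is accounted for by matching the two contributions rather than producing a nontrivial obstruction. I expect the verification to reduce to the surjectivity of restriction $\hat{H}\to\hat{H_{0}}$ together with the multiplicativity of the Kummer generators, yielding an explicit transcendence basis.

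\textbf{The stably rational claim.} For $\KK(\ZZ G)^{G}$ I would avoid diagonalization entirely and instead exploit that the two actions have trivially intersecting kernels. Over $E=\KK(\ZZ\nicefrac{G}{N})$ the group $G$ acts with kernel $H_{0}\cap N=\left\{ e\right\}$, hence faithfully, so $E/E^{G}$ is $G$-Galois; by \propref{Masuda} the permutation lattice $\ZZ G$ makes $\KK(\ZZ\nicefrac{G}{N}\oplus\ZZ G)^{G}=E(\ZZ G)^{G}$ rational over $E^{G}=\KK(\ZZ\nicefrac{G}{N})^{G}$, which by the first part is rational over $\KK^{G}$. Reading the same field the other way, over $E'=\KK(\ZZ G)$ the action is again faithful, so \propref{Masuda} makes $\KK(\ZZ\nicefrac{G}{N}\oplus\ZZ G)^{G}$ rational over $\KK(\ZZ G)^{G}$ as well. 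Thus $\KK(\ZZ G)^{G}$ acquires, after adjoining the $\ZZ\nicefrac{G}{N}$-transcendentals, a field that is rational over $\KK^{G}$, which is exactly stable rationality of $\KK(\ZZ G)^{G}/\KK^{G}$.
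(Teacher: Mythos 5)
Your proposal is correct and follows essentially the same route as the paper: reduce to $N$-invariants, diagonalize $\ZZ H$ by characters as in Fischer's argument, restrict to the full-rank kernel of $\ZZ^{\hat{H}}\to\hat{H_{0}}$, twist the resulting monomials by Hilbert 90/Kummer elements of $\LL^{\times}$, and obtain the stable rationality of $\KK(\ZZ G)^{G}$ by applying \propref{Masuda} twice to $\KK(\ZZ\nicefrac{G}{N}\oplus\ZZ G)^{G}$. The one step you defer --- that the invariant monomials generate all of $\LL(\ZZ H)^{H_{0}}$ --- is handled in the paper exactly as you anticipate, by the standard Fischer degree count $\left|H_{0}\right|\leq\left[\LL(x_{\varphi}):\LL(x^{(j)})\right]\leq\left|H_{0}\right|$ using surjectivity of $H^{*}\to H_{0}^{*}$.
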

\begin{proof}
Note first that $G$ acts faithfully on $\KK(\ZZ\nicefrac{G}{N})$
so
\[
\KK(\ZZ\nicefrac{G}{N})^{G}\approx\KK(\ZZ\nicefrac{G}{N}\oplus\ZZ G)^{G}\approx\KK(\ZZ G)^{G},
\]
hence it is enough to prove the rationality of $\KK(\ZZ\nicefrac{G}{N})^{G}/\KK^{G}$.

Since $N$ acts trivially on $\ZZ\nicefrac{G}{N}$ it follows that
$\KK(\ZZ\nicefrac{G}{N})^{N}=\KK^{N}\left(\ZZ\nicefrac{G}{N}\right)$
as $\nicefrac{G}{N}\cong H$ field. We are thus reduced to the question
of rationality of $\KK(\ZZ H)^{H}$ over $\KK^{H}$ where the kernel
of the $H$ action on $\KK$ is exactly $H_{0}$ and $\KK^{H}$ contains
an $m=\left|H\right|$ primitive root of unity $\zeta$.

Write $\KK(\ZZ H)=\KK(x_{h}\;\mid\; h\in H)$. Since $\KK^{H}$ contains
a root of unity of the order of the exponent of $H$, we get that
$H^{*}:=Hom(H,\KK^{\times})=Hom(H,(\KK^{H})^{\times})$. For any $\varphi\in H^{*}$
define $x_{\varphi}=\sum_{h}\varphi(h^{-1})x_{h}$. We have that $\KK(x_{h}\;\mid\; h\in H)=\KK(x_{\varphi}\;\mid\;\varphi\in H^{*})$
and $g(x_{\varphi})=\varphi(g)x_{\varphi}$. Let $\psi:\prod_{\left|H\right|}\ZZ\to H_{0}^{*}$
be defined by 
\[
\psi(k_{1},...,k_{m})(h)=\prod\varphi_{i}^{k_{i}}(h).
\]
Note that since $H$ is abelian and $\KK$ contains a primitive $m$-th
root of unity, the natural homomorphism $H^{*}\to H_{0}^{*}$ is surjective,
and in particular $\psi$ is surjective. The kernel of $\psi$ is
a subgroup of $\prod_{\left|H\right|}\ZZ$ of finite index $\left|H_{0}\right|$,
so it is also free of rank $m$. If $\bar{k}=(k_{1},...,k_{m})$ and
$x^{(\bar{k})}=\prod x_{\varphi_{i}}^{k_{i}}$, then $g(x^{\left(\bar{k}\right)})=\psi(\bar{k})(g)\cdot x^{\left(\bar{k}\right)}$.

Let $\bar{k}^{(j)}=(k_{1}^{(j)},...,k_{m}^{(j)});\; j=1,...,m$ be
a basis for $\ker(\psi)$ and set $x^{(j)}:=\prod x_{\varphi_{i}}^{k_{i}^{(j)}}$
for $1\leq j\leq m$. By the previous argument we have that $\KK(x^{(1)},...,x^{(m)})\subseteq\KK(x_{\varphi_{1}},...,x_{\varphi_{m}})^{H_{0}}$.
If $\varphi_{i}\mid_{K}$ has order $t$ in $K^{*}$, then $x_{\varphi_{i}}^{t}$
is an invariant monomial, and therefore 
\[
\left[\KK(x^{(1)},...,x^{(m)})\left[x_{\varphi_{i}}\right]:\KK(x^{(1)},...,x^{(m)})\right]\leq t.
\]
Using the decomposition of $H_{0}^{*}$ into cyclic groups and adding
each of their generators, we get that
\[
\left|H_{0}\right|=\left[\KK(x_{\varphi_{1}},...,x_{\varphi_{m}}):\KK(x_{\varphi_{1}},...,x_{\varphi_{m}})^{K}\right]\leq\left[\KK(x_{\varphi_{1}},...,x_{\varphi_{m}}):\KK(x^{(1)},...,x^{(m)})\right]\leq\left|H_{0}\right|,
\]
and the equality $\KK(x_{\varphi_{1}},...,x_{\varphi_{m}})^{H_{0}}=\KK(x^{(1)},...,x^{(m)})$
follows (up to this point we followed Fischer's proof).

For each $1\leq j\leq m$ and $h\in\nicefrac{H}{H_{0}}$ we have $h(x^{(j)})=\zeta_{h,j}x^{(j)}$
for elements (roots of unity) $\zeta_{h,j}\in\KK$. For any fixed
$j$, the map $h\mapsto\zeta_{h,j}$ is a 1-cocycle in $Z^{1}\left(\nicefrac{H}{H_{0}},\KK^{\times}\right)$,
which by Hilbert 90 has the form $\zeta_{h,j}=\frac{a_{j}}{h(a_{j})}$
for some $a_{j}\in\KK^{\times}$. It follows that $y^{(j)}=x^{(j)}a_{j}$
are $\nicefrac{H}{H_{0}}$ invariant and algebraically independent.
The proof is now finished by noting that 
\[
\KK(x^{(1)},...,x^{(m)})^{\left(H/H_{0}\right)}=\KK(y^{(1)},...,y^{(m)})^{\left(H/H_{0}\right)}=\KK^{\left(H/H_{0}\right)}(y^{(1)},...,y^{(m)})=\KK^{H}(y^{(1)},...,y^{(m)}).
\]
\end{proof}
\begin{thm}
\label{thm:Now_we_are_faithful}Let $G=N\rtimes H$ be a semidirect
product of abelian groups and set $\tilde{G}=\nicefrac{G}{H\cap Z(G)}$.
Then setting $X=(\nicefrac{G}{H},E_{-}(G))$ and $\tilde{X}=(\nicefrac{\tilde{G}}{\tilde{H}},E_{-}(\tilde{G}))$
we have $\FF(Fl(X)\oplus\ZZ G)^{G}\approx\FF(Fl(\tilde{X})\oplus\ZZ\tilde{G})^{\tilde{G}}$.
\end{thm}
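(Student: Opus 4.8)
The plan is to produce a single base field over which \emph{both} sides of the claimed equivalence are stably rational, and then to invoke the elementary fact that an extension is stably rational over $F$ precisely when it is stably isomorphic to $F$; transitivity of $\approx$ will then finish the argument. The common base I would use is
\[
F:=\FF(Fl(X))^{G}=\FF(Fl(\tilde{G}/\tilde{H})\text{'s flow lattice})^{\tilde{G}},
\]
i.e. $F=\FF(Fl(X))^{G}=\FF(Fl(\tilde{X}))^{\tilde{G}}$, where the two fixed fields coincide under the canonical identification of $Fl(X)$ with $Fl(\tilde{X})$ as $\tilde{G}$-lattices. This equality is exactly what was observed at the start of this subsection: the $G$-action on $X$ factors through $\tilde{G}=\nicefrac{G}{H_{0}}$ with $H_{0}=Z(G)\cap H$ acting trivially, so $\FF(Fl(X))^{G}=(\FF(Fl(X))^{H_{0}})^{\tilde{G}}=\FF(Fl(\tilde{X}))^{\tilde{G}}$.

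For the left-hand side I would set $\KK=\FF(Fl(X))$ and view it as carrying the monomial $G$-action coming from the lattice $Fl(X)$. Because this action factors through $\tilde{G}$, its kernel is $H_{0}=Z(G)\cap H$, and $\KK^{G}\supseteq\FF$ contains a primitive $\left|H\right|$-th root of unity. Since $\FF(Fl(X)\oplus\ZZ G)=\KK(\ZZ G)$ with compatible $G$-action, this is literally the situation prepared for \lemref{Fischer-generalize}, which then yields that $\FF(Fl(X)\oplus\ZZ G)^{G}=\KK(\ZZ G)^{G}$ is stably rational over $\KK^{G}=F$.

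For the right-hand side I would first check that $\tilde{G}$ acts on $\tilde{\KK}=\FF(Fl(\tilde{X}))$ with \emph{trivial} kernel. Indeed $\tilde{G}\cong N\rtimes\tilde{H}$ with $\tilde{H}=\nicefrac{H}{H_{0}}$ acting faithfully on $N$, so $Z(\tilde{G})\cap\tilde{H}=\{e\}$ and hence $\tilde{G}$ acts faithfully on the coset set $\tilde{V}=\nicefrac{\tilde{G}}{\tilde{H}}$. Then $\tilde{\KK}/\tilde{\KK}^{\tilde{G}}$ is a $\tilde{G}$-Galois extension and $\ZZ\tilde{G}$ is a permutation lattice, so \propref{Masuda} shows $\FF(Fl(\tilde{X})\oplus\ZZ\tilde{G})^{\tilde{G}}=\tilde{\KK}(\ZZ\tilde{G})^{\tilde{G}}$ is rational, in particular stably rational, over $\tilde{\KK}^{\tilde{G}}=F$. (Equivalently one may reapply \lemref{Fischer-generalize} to $\tilde{G}$, whose relevant kernel subgroup $Z(\tilde{G})\cap\tilde{H}$ is now trivial.)

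Having both extensions stably rational over the same field $F$, each is stably isomorphic to $F$, hence to the other, over $F$ and therefore over $\FF$; transitivity of $\approx$ gives $\FF(Fl(X)\oplus\ZZ G)^{G}\approx\FF(Fl(\tilde{X})\oplus\ZZ\tilde{G})^{\tilde{G}}$, as desired. The step I expect to require the most care is the bookkeeping of the hypotheses of \lemref{Fischer-generalize}, specifically confirming that $G$ acts on $\KK=\FF(Fl(X))$ with kernel \emph{exactly} $H_{0}$ rather than a larger subgroup. This can fail only in the degenerate case where $\tilde{G}$ acts trivially on the then rank-one lattice $Fl(\tilde{X})$, which forces $\left|\nicefrac{\tilde{G}}{\tilde{H}}\right|\le 2$ and hence $G\cong C_{2}\times H$; that case I would dispose of directly (via Fischer's theorem for the trivial action), so that the main line of argument applies in all remaining cases.
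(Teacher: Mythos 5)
Your proof is correct and follows essentially the same route as the paper: both sides are compared over the common fixed field $\FF(Fl(X))^{G}=\FF(Fl(\tilde{X}))^{\tilde{G}}$, with \lemref{Fischer-generalize} giving stable rationality of the left-hand side over that base and \propref{Masuda} (or the lemma again) handling the faithful quotient side. Your extra verification that the kernel of the $G$-action on $\FF(Fl(X))$ is exactly $H_{0}$ outside the degenerate case $\left|N\right|=2$ is a point the paper passes over silently, and is a worthwhile addition.
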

Thus, by the theorem above we assume for the rest of this section
that $H$ acts on $N$ faithfully. In particular, if the new group
$G$ is dihedral $D_{2n}$ with $n$ odd and $\FF$ contains a primitive
$2n$ root of unity, then \thmref{faithful_graphs} together with
Snider's result shows that $\FF(Fl(\nicefrac{G}{\left\langle \tau\right\rangle })\oplus\ZZ G)^{G}/\FF$
is a stably rational extension.

\subsection{$G=H\times K$ for groups $H,K$ of coprime orders }

The aim of this section is to show that the field generated by $\FF(Fl(H)\oplus\ZZ H)^{H}\otimes\FF(Fl(K)\oplus\ZZ K)^{K}$
is stably isomorphic to $\FF(Fl(G)\oplus\ZZ G)^{G}$ whenever $G=H\times K$
such that $H$ and $K$ have coprime orders. In particular this means
that in order to establish stable rationality for the group $G$,
it is enough to prove it for $H$ and $K$.

Let $G$ be any finite group and consider its action on the graph
$Cay(G,G_{-})=(V,E)$ and its corresponding lattices $\ZZ V,\ZZ E$
and $Fl(V,E)$. A natural $\ZZ$-basis for $Fl(V,E)$ is given by
\begin{eqnarray*}
d(g,h) & = & (e\to g\to gh)-(e\to gh),\; g,h,gh\in G\backslash\left\{ e\right\} ,\\
d(g,g^{-1}) & = & \left(e\to g\right)-\left(g\to e\right),\; g\in G\backslash\left\{ e\right\} .
\end{eqnarray*}
In other words, we take the spanning tree consisting of all the edges
$e\to g$ and for each edge not appearing in this tree we create a
simple flow by adding to the edge a simple path from the tree. It
is now easy to show that these flows satisfy the two cocycle condition,
namely
\[
d(g_{1},g_{2})+d(g_{1}g_{2},g_{3})=d(g_{1},g_{2}g_{3})+g_{1}(d(g_{2},g_{3})),
\]
where we write $d(e,g)=d(g,e)=1$ for each $g\in G$. Furthermore,
these relations generate all the other relations on this basis. This
is not surprising since this is just the next part of the bar resolution
of $G$ which starts with
\[
\xymatrix{\ZZ E\ar[r]^{\partial} & \ZZ V\ar[r]^{\varepsilon} & \ZZ\ar[r] & 0}
.
\]
Letting $d_{g}$ be the edge $e\to g$ for $g\neq e$ we get that
flows $d(g,h)$ together with the edges $d_{g}$ constitute a basis
for $\ZZ E$, and additionally we have
\[
d(h,g)=d_{h}+h(d_{g})-d_{hg}.
\]
Note that applying $\partial$ on the equation above we get that $h(\tilde{d}_{g})=\tilde{d}_{hg}-\tilde{d}_{h}$
which are the defining relation for the lattice $I_{G}$ (where we
identify $\tilde{d}_{g}$ with $g-e$). Keeping all of this in mind,
we have the following definitions (in multiplicative form).

Let $\mathbb{L}/\mathbb{L}^{G}$ be a $G$-Galois extension of fields
and $\alpha\in Z^{2}(G,\LL^{\times})$ a $2$-cocycle. Define $\LL_{\alpha}(I_{G})$
to be the field $\LL\left(y_{g}\;\mid\; e\neq g\in G\right)$ where
the $y_{g}$ are algebraically independent with the $G$-Galois action
\[
h(y_{g})=\frac{y_{hg}}{y_{h}}\alpha(h,g),
\]
where we denote $y_{e}=1$. Note that the $G$-action can be rewritten
as $\frac{y_{h}h(y_{g})}{y_{hg}}=\alpha(h,g)$. 

For fixed elements $\lambda_{g}\in\LL^{\times},\; e\neq g\in G$,
let $z_{g}=\lambda_{g}y_{g}$. Then clearly $\LL_{\alpha}(I_{G})=\LL(z_{g}\;\mid\; e\neq g\in G)$
with the $G$-action defined by $\frac{z_{h}h(z_{g})}{z_{hg}}=\frac{\lambda_{h}h(\lambda_{g})}{\lambda_{hg}}\alpha(h,g)$,
so in particular the field $\LL_{\alpha}(I_{G})$ is a function of
the cohomology class of $\alpha$.

It can also be shown that $\LL_{\alpha}(I_{G})$ is a generic splitting
field of $\Delta=\Delta(\LL/\LL^{G},G,\alpha)$ in the sense of Amitsur
\cite{amitsur_generic_1982}. We need only one property of such fields
which we now prove.
\begin{lem}
\label{lem:splitting_field_is_rational-1}Let $\Delta=\Delta(\LL/\LL^{G},G,\alpha)$
where $\Delta$ is split. Then $\LL_{\alpha}(I_{G})^{G}$ is stably
rational over $\LL^{G}$.\end{lem}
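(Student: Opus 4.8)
The plan is to untwist the cocycle $\alpha$ and then exploit that the augmentation ideal $I_{G}$ is a quasi-permutation lattice. First, since $\Delta=\Delta(\LL/\LL^{G},G,\alpha)$ is split, its class in the relative Brauer group $Br(\LL/\LL^{G})\cong H^{2}(G,\LL^{\times})$ vanishes, so $\alpha$ is a $2$-coboundary: there are $\mu_{g}\in\LL^{\times}$ with $\mu_{e}=1$ and $\alpha(h,g)=\mu_{h}\,h(\mu_{g})/\mu_{hg}$. Using the observation preceding the lemma that $\LL_{\alpha}(I_{G})$ depends only on the cohomology class of $\alpha$, I would substitute $z_{g}=\mu_{g}^{-1}y_{g}$, which transforms the defining relation $y_{h}h(y_{g})/y_{hg}=\alpha(h,g)$ into $z_{h}h(z_{g})/z_{hg}=1$, that is $h(z_{g})=z_{hg}/z_{h}$ with $z_{e}=1$. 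Under the identification $z_{g}\leftrightarrow x^{g-e}$ this is exactly the $G$-action on the group algebra $\LL\left[I_{G}\right]$ coming from the Galois action on $\LL$ together with the lattice action $h\cdot(g-e)=(hg-e)-(h-e)$ on $I_{G}$. Since this change of variables is a $G$-equivariant $\LL$-isomorphism fixing $\LL^{G}$, we obtain $\LL_{\alpha}(I_{G})^{G}\cong\LL(I_{G})^{G}$ over $\LL^{G}$, and it suffices to show that $\LL(I_{G})^{G}$ is stably rational over $\LL^{G}$.

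Next I would use the augmentation sequence
\[
\xymatrix{0\ar[r] & I_{G}\ar[r] & \ZZ G\ar[r]^{\varepsilon} & \ZZ\ar[r] & 0}
\]
which embeds $I_{G}$ into the permutation lattice $\ZZ G$ with permutation quotient $\ZZ\cong\ZZ\nicefrac{G}{G}$; in particular $I_{G}$ is quasi-permutation. Since $\LL/\LL^{G}$ is a $G$-Galois extension, condition (2) of \thmref{reduction_by_permutation} applies to this sequence and gives that $\LL(\ZZ G)^{G}$ is rational over $\LL(I_{G})^{G}$. Finally, Masuda's theorem (\propref{Masuda}) shows that $\LL(\ZZ G)^{G}$ is rational over $\LL^{G}$, because $\ZZ G$ is a permutation lattice and $\LL/\LL^{G}$ is Galois.

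Chaining the two rationality statements, adjoining to $\LL(I_{G})^{G}$ the finitely many indeterminates produced by the first step yields $\LL(\ZZ G)^{G}$, which is purely transcendental over $\LL^{G}$; hence $\LL_{\alpha}(I_{G})^{G}\cong\LL(I_{G})^{G}$ is stably rational over $\LL^{G}$, as claimed. I expect the only delicate point to be the bookkeeping in the first paragraph, namely checking that the coboundary substitution really produces the standard untwisted $G$-action on $\LL\left[I_{G}\right]$ so that the lattice machinery of \secref{Preliminaries} becomes applicable; once that equivariant isomorphism is secured, the remainder is a direct two-step application of \thmref{reduction_by_permutation} and \propref{Masuda}.
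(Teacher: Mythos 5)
Your proof is correct and follows essentially the same route as the paper: untwist the coboundary $\alpha$ via the substitution $z_{g}=\mu_{g}^{-1}y_{g}$ (which is exactly the paper's observation that $\LL_{\alpha}(I_{G})$ depends only on the cohomology class of $\alpha$), then apply \thmref{reduction_by_permutation} to the augmentation sequence and \propref{Masuda} to $\ZZ G$. Your write-up is just a more explicit version of the paper's argument.
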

\begin{proof}
Since $\Delta$ splits, the cocycle $\alpha$ is cohomologous to $1$.
As mentioned above, we may assume that the action is defined by $h(y_{g})=\frac{y_{hg}}{y_{h}}$,
or in other words, $\LL_{\alpha}(I_{G})\cong\LL_{1}(I_{G})\cong\LL(I_{G})$
where the correspondence is $g-e\leftrightarrow y_{g}$. The module
$I_{G}$ is part of the exact sequence
\[
\xymatrix{0\ar[r] & I_{G}\ar[r] & \ZZ G\ar[r]^{\varepsilon} & \ZZ\ar[r] & 0}
,
\]
so by \thmref{reduction_by_permutation} we get that $\LL(\ZZ G)^{G}$
is rational over $\LL(I_{G})^{G}$. On the other hand $\LL(\ZZ G)^{G}$
is rational over $\LL^{G}$ since $\ZZ G$ is a permutation lattice
and $\LL/\LL^{G}$ is Galois, and the lemma follows.
\end{proof}
We now turn to prove that if $\FF(Fl(H))^{H}$ and $\FF(Fl(K))^{K}$
are stably rational over $\FF$ and $H,K$ are of coprime orders,
then so is $\FF(Fl(G))^{G}$ for $G=H\times K$. 

This was proved in the nongraded case by Katsylo \cite{katsylo_stable_1990},
Schofield \cite{schofield_matrix_1992} and Saltman \cite{saltman_note_1992}.
We will adapt Saltman's proof for the graded case. 

Let $\EE$ be a field with a $G$-action and consider the field $\EE\left(Fl(V,E)\right)$.
Letting $c(g,h)$ be the variables in $\EE(Fl(V,E))$ corresponding
$d(g,h)$, it follows that $\left\{ c(g,h)\;\mid\; e\neq g,h\in G\right\} $
are algebraically independent over $\EE$ and $\sigma(c(g,h))=\frac{c(\sigma,g)c(\sigma g,h)}{c(\sigma,gh)}$
for every $\sigma,g,h\in G$, where we denote $c(g,e)=c(e,g)=1$ for
all $g\in G$. We call such a field a generic $G$ 2-cocycle extension
of $\EE$, and denote it by $\EE(c)$. 
\begin{lem}
\label{lem:generic_2_cocycle-1}Let $\LL/\LL^{G}$ be a $G$-Galois
extension $\alpha\in Z^{2}(G,\LL^{\times})$ and let $c$ be a generic
$G$ 2-cocycle. Then we have the following:
\begin{enumerate}
\item $\LL(c)_{\alpha\cdot c}(I_{G})\cong\LL(c)_{c}(I_{G})$.
\item $\LL(c)_{c}(I_{G})^{G}$ is a rational extension of $\LL^{G}$.
\end{enumerate}
\end{lem}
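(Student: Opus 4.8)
The common thread in both parts is that the defining relation $\sigma(c(g,h))=c(\sigma,g)c(\sigma g,h)/c(\sigma,gh)$ of a generic cocycle says exactly that $c\in Z^{2}(G,\LL(c)^{\times})$, and that over the larger edge field this cocycle is a coboundary. Concretely, writing $U_{g}$ for the variable attached to the tree edge $d_{g}=(e\to g)$, the bar-resolution identity $d(h,g)=d_{h}+h(d_{g})-d_{hg}$ reads multiplicatively as $c(h,g)=U_{h}\,h(U_{g})/U_{hg}$. The plan is to use the genericity of $c$ in part (1) to absorb $\alpha$ by an automorphism of $\LL(c)$, and in part (2) to recognise the twisted field as an untwisted permutation-lattice field.

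For part (1), I would set $c'(g,h):=\alpha(g,h)\,c(g,h)$ and first check that $c'$ is again a generic $G$ $2$-cocycle living in the same field $\LL(c)=\LL(c')$. The only point to verify is the relation $\sigma(c'(g,h))=c'(\sigma,g)c'(\sigma g,h)/c'(\sigma,gh)$; expanding both sides and cancelling the $c$-factors, this reduces precisely to $\alpha(\sigma,g)\alpha(\sigma g,h)=\sigma(\alpha(g,h))\alpha(\sigma,gh)$, i.e. to the $2$-cocycle condition on $\alpha$. Consequently the assignment $c(g,h)\mapsto\alpha(g,h)c(g,h)$, extended by the identity on $\LL$, is a $G$-equivariant $\LL$-automorphism of $\LL(c)$, and extending it further by $y_{g}\mapsto y_{g}'$ yields a $G$-isomorphism $\LL(c)_{c}(I_{G})\to\LL(c)_{\alpha\cdot c}(I_{G})$. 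I want to stress that this is \emph{not} an instance of the earlier principle that $\LL_{\beta}(I_{G})$ depends only on the class of $\beta$: the cocycle $\alpha$ need not split over $\LL(c)$, and the isomorphism is not defined over $\LL(c)$ but uses the nontrivial automorphism of $\LL(c)$ itself. This is exactly where the genericity of $c$ enters.

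For part (2), the plan is to identify the $G$-field $\LL(c)_{c}(I_{G})$ with the permutation-lattice field $\LL(\ZZ E)$. The $G$-action on $\LL(c)_{c}(I_{G})=\LL(c(g,h),\,y_{g})$ is purely monomial (each generator is sent to a monomial with coefficient $1$), so it equals $\LL(L)$, where $L$ is the lattice of Laurent monomials in the $c(g,h)$ and $y_{g}$ with its induced $G$-action. Reading off the action gives $\sigma\cdot[c(g,h)]=[c(\sigma,g)]+[c(\sigma g,h)]-[c(\sigma,gh)]$ and $\sigma\cdot[y_{g}]=[y_{\sigma g}]-[y_{\sigma}]+[c(\sigma,g)]$, which match $\sigma(d(g,h))$ and $\sigma(d_{g})$ under the assignment $[c(g,h)]\mapsto d(g,h)$, $[y_{g}]\mapsto d_{g}$; the verification for the $y_{g}$ is again the relation $d(h,g)=d_{h}+h(d_{g})-d_{hg}$, and for the $c(g,h)$ it is the $2$-cocycle identity for $d$. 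Since $\{d(g,h)\}\cup\{d_{g}\}$ is a $\ZZ$-basis of $\ZZ E$, this assignment is a $G$-lattice isomorphism $L\cong\ZZ E$. Hence $\LL(c)_{c}(I_{G})^{G}=\LL(\ZZ E)^{G}$, and as $\ZZ E$ is a permutation lattice and $\LL/\LL^{G}$ is $G$-Galois, \propref{Masuda} shows this is rational over $\LL^{G}$.

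The main obstacle is part (2): one must see that the \emph{twisted} construction $\LL(c)_{c}(I_{G})$ is secretly the \emph{untwisted} field of the permutation lattice $\ZZ E$ -- equivalently, that the generic cocycle becomes the coboundary of the tree-edge variables over the edge field -- which is exactly what upgrades the merely stably rational conclusion of \lemref{splitting_field_is_rational-1} to an honest rational extension. The remaining checks (the $G$-equivariance in part (1) and the matching of the two monomial actions in part (2)) are routine once the cocycle identity and the relation $d(h,g)=d_{h}+h(d_{g})-d_{hg}$ are invoked.
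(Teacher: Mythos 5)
Your proposal is correct and follows essentially the same route as the paper: part (1) observes that $c'=\alpha\cdot c$ is again a generic $2$-cocycle inside the same field $\LL(c)$ (the verification reducing to the cocycle identity for $\alpha$), and part (2) identifies $\LL(c)_{c}(I_{G})$ with $\LL(\ZZ E)$ via $c(g,h)\leftrightarrow d(g,h)$, $y_{g}\leftrightarrow d_{g}$ and invokes \propref{Masuda}. Your explicit spelling out of the $G$-equivariant automorphism in (1), and the warning that this is not the ``cohomologous cocycles give isomorphic fields'' principle, are accurate elaborations of what the paper leaves implicit.
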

\begin{proof}

\begin{enumerate}
\item The action on $\LL_{\alpha c}(I_{G})$ is defined by
\[
\frac{y_{h}h(y_{g})}{y_{hg}}=\alpha(h,g)c(h,g).
\]
Setting $c'(h,g)=c(h,g)\alpha(h,g)$, we clearly get that $\left\{ c'(h,g)\;\mid\; e\neq h,g\in G\right\} $
are algebraically independent over $\LL$, so $c'$ is a generic $G$
2-cocycle and $\LL(c)_{\alpha\cdot c}(I_{G})\cong\LL(c')_{c'}(I_{G})$.
\item As mentioned in the beginning of this section, $\LL(c)_{c}(I_{G})$
is just a complex way of writing $\LL(\ZZ E)$. Since $\LL/\LL^{G}$
is $G$-Galois and $\ZZ E$ is a permutation lattice, we get that
$\LL(c)_{c}(I_{G})^{G}=\LL(\ZZ E)^{G}$ is rational over $\LL^{G}$.
\end{enumerate}
\end{proof}
We now turn to decompose 2-cocycles on $G$ into two 2-cocycles on
$H$ and $K$.

Let $c$ be a generic $G$ 2-cocycle and let $\alpha,\beta$ be inflations
of generic $H\cong\nicefrac{G}{K}$ and $K\cong\nicefrac{G}{H}$ 2-cocycles.
More precisely, the elements $\left\{ \alpha(h_{1},h_{2})\;\mid\; e\neq h_{1},h_{2}\in H\right\} $
are algebraically independent, $\alpha(h_{1}g_{1},h_{2}g_{2})=\alpha(h_{1},h_{2})$
for any $h_{1},h_{2}\in H$ and $g_{1},g_{2}\in K$, and the $G$-action
is defined by
\begin{eqnarray*}
g(\alpha(h_{1},h_{2})) & = & \alpha(h_{1},h_{2})\quad\mbox{for all }g\in K,\; h_{1},h_{2}\in H\\
h_{1}(\alpha(h_{2},h_{3})) & = & \frac{\alpha(h_{1},h_{2})\alpha(h_{1}h_{2},h_{3})}{\alpha(h_{1},h_{2}h_{3})}\mbox{ for all }h_{1},h_{2},h_{3}\in H,
\end{eqnarray*}
and similarly define $\beta$. 

We will show that $\FF(\alpha,\beta,c)_{\alpha\beta c}(I_{G})^{G}$
is rational over $\FF(c)^{G}$ and over $\FF(\alpha,\beta)^{G}$.
The last field is the fraction field of $\FF(\alpha)^{H}\otimes_{\FF}\FF(\beta)^{K}$,
and this will finish the proof.

The second part follows from the previous two lemmas. More precisely
we have $\FF(\alpha,\beta,c)_{\alpha\beta c}(I_{G})^{G}=\FF(\alpha,\beta)(c)_{\alpha\beta c}(I_{G})^{G}\cong\FF(\alpha,\beta)(c)_{c}(I_{G})^{G}$
which is rational over $\FF(\alpha,\beta)^{G}$.

Since $\alpha,\beta$ are two generic $2$-cocycles for $H,K$ respectively,
we expect that $\alpha\cdot\beta$ will have similar properties as
a generic $G$ 2-cocycle. Indeed, the next lemma shows that the previous
lemma still holds in this case.
\begin{lem}
\label{lem:product_generic_2_cocycle-1}Let $\LL/\LL^{G}$ be a $G$-Galois
extension where $G=H\times K$ with $\left(\left|H\right|,\left|K\right|\right)=1$.
Let $\alpha,\beta$ be the inflations of the $H$ and $K$ generic
2-cocycles. Then the following holds:
\begin{enumerate}
\item $\LL(\alpha,\beta)_{\alpha\beta\gamma}(I_{G})\cong\LL(\alpha,\beta)_{\alpha\beta}(I_{G})$
for all $\gamma\in Z^{2}(G,\LL^{\times})$.
\item $\LL(\alpha,\beta)_{\alpha\beta}(I_{G})^{G}$ is a stably rational
extension of $\LL^{G}$. 
\end{enumerate}
\end{lem}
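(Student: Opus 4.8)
The plan is to mirror the proof of \lemref{generic_2_cocycle-1}, treating $\alpha\beta$ as a substitute for the fully generic cocycle $c$; the coprimality of $|H|$ and $|K|$ is exactly what makes $\alpha\beta$ ``generic enough'' to play this role.

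For part (1), recall that $\LL(\alpha,\beta)_{\theta}(I_{G})$ depends only on the class of $\theta$ in $H^{2}(G,\LL(\alpha,\beta)^{\times})$, so I first want to split $\gamma$. Because $\gcd(|H|,|K|)=1$, the Lyndon--Hochschild--Serre spectral sequence for $1\to K\to G\to H\to1$ degenerates (the terms $H^{p}(H,H^{q}(K,-))$ vanish for $p,q\geq1$, being simultaneously killed by $|H|$ and by $|K|$), and combined with the symmetric sequence for $H$ this gives the standard decomposition
\[
H^{2}(G,\LL^{\times})\cong\mathrm{Inf}^{G}_{H}\,H^{2}\!\left(H,(\LL^{K})^{\times}\right)\ \oplus\ \mathrm{Inf}^{G}_{K}\,H^{2}\!\left(K,(\LL^{H})^{\times}\right).
\]
Hence $\gamma$ is cohomologous to a product $\gamma_{H}\gamma_{K}$, where $\gamma_{H}$ is inflated from some $\tilde\gamma_{H}\in Z^{2}(H,(\LL^{K})^{\times})$ and $\gamma_{K}$ from $\tilde\gamma_{K}\in Z^{2}(K,(\LL^{H})^{\times})$. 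I then set $\alpha'=\alpha\tilde\gamma_{H}$ and $\beta'=\beta\tilde\gamma_{K}$. Every defining property of a generic inflation survives: the values $\alpha'(h_{1},h_{2})$ remain algebraically independent over $\LL$ (the factors $\tilde\gamma_{H}$ lie in $\LL^{\times}$), they remain inflated from $H$, they are $K$-invariant \emph{precisely because} $\tilde\gamma_{H}$ takes values in $\LL^{K}$, and they satisfy the cocycle-shift relation \emph{precisely because} $\tilde\gamma_{H}\in Z^{2}(H,\cdot)$. Thus $\alpha',\beta'$ are again generic inflations with $\LL(\alpha',\beta')=\LL(\alpha,\beta)$, and since $\alpha\beta\gamma$ is cohomologous to $\alpha'\beta'$, relabelling $\alpha',\beta'$ back to $\alpha,\beta$ yields the isomorphism $\LL(\alpha,\beta)_{\alpha\beta\gamma}(I_{G})\cong\LL(\alpha,\beta)_{\alpha\beta}(I_{G})$.

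For part (2) I would first pass to lattice language. Exactly as $\LL(c)_{c}(I_{G})=\LL(\ZZ E)$ in \lemref{generic_2_cocycle-1}, the field $\LL(\alpha,\beta)_{\alpha\beta}(I_{G})$ equals $\LL(M)$ for the $G$-lattice $M$ in an exact sequence
\[
\xymatrix{0\ar[r] & \widehat{Fl}_{H}\oplus\widehat{Fl}_{K}\ar[r] & M\ar[r] & I_{G}\ar[r] & 0,}
\]
where $\widehat{Fl}_{H}=\mathrm{Inf}^{G}_{H}Fl(H,H_{-})$ and $\widehat{Fl}_{K}=\mathrm{Inf}^{G}_{K}Fl(K,K_{-})$, and the extension class is the one dictated by $\alpha\beta$ (the generators $y_{g}$ transform by $\alpha\beta$). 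Since $\LL/\LL^{G}$ is a $G$-Galois extension of fields, \thmref{reduction_by_flasque} together with \propref{Masuda} reduces the claim to showing that $M$ is quasi-permutation, i.e. $[M]^{fl}=0=[\ZZ G]^{fl}$; then $\LL(M)^{G}\approx\LL(\ZZ G)^{G}$, which is rational over $\LL^{G}$.

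Establishing $[M]^{fl}=0$ is the main obstacle, and it is here that $\gcd(|H|,|K|)=1$ is indispensable. One cannot argue by naive additivity of the flasque class over the displayed sequence: that class is \emph{not} additive (indeed $\widehat{Fl}_{H}$ restricts to $Fl(H,H_{-})$ on $H$, which is usually \emph{not} quasi-permutation, so $\widehat{Fl}_{H}$ itself is not quasi-permutation), and the vanishing must come from the specific non-split extension produced by $\alpha\beta$ --- in the same way that the non-split edge sequence $0\to Fl(H,H_{-})\to\ZZ E_{H}\to I_{H}\to0$ realizes a permutation lattice with a non-quasi-permutation kernel. The route I would take is to compare $M$ with the permutation edge lattice $\ZZ E$ of the fully generic cocycle, which surjects onto the \emph{same} $I_{G}$ with kernel $Fl(G,G_{-})$, and to exploit the coprime splitting of $I_{G}$ supplied by the Koszul/Mayer--Vietoris resolution
\[
\xymatrix{0\ar[r] & I_{H}\otimes I_{K}\ar[r] & \mathrm{Ind}^{G}_{H}I_{H}\ \oplus\ \mathrm{Ind}^{G}_{K}I_{K}\ar[r] & \ZZ G\ar[r] & \ZZ\ar[r] & 0,}
\]
whose middle terms are induced, hence quasi-permutation, and to which Shapiro's lemma applies to transfer the computation to $H$ and to $K$ separately. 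The crux is that the obstruction term $I_{H}\otimes I_{K}$ contributes nothing, which is precisely where coprimality is used; pulling $M$ back along $\mathrm{Ind}^{G}_{H}I_{H}\oplus\mathrm{Ind}^{G}_{K}I_{K}\twoheadrightarrow I_{G}$ should then make the non-quasi-permutation part of $\widehat{Fl}_{H}$ cancel against that of $I_{G}$, giving $[M]^{fl}=0$ and completing part (2).
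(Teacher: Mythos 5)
Part (1) of your proposal is essentially the paper's argument: the author also splits $\gamma=\gamma_{H}\gamma_{K}$ with $\gamma_{H}=\gamma^{b\left|K\right|}$, $\gamma_{K}=\gamma^{a\left|H\right|}$ (using $a\left|H\right|+b\left|K\right|=1$ and the inflation--restriction sequence rather than the spectral sequence, but this is the same coprimality mechanism), absorbs $\gamma_{H}$ into $\alpha$ and $\gamma_{K}$ into $\beta$, and observes that the result is again a pair of generic inflations. That part is fine.

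Part (2), however, has a genuine gap. Your reduction to showing that the lattice $M$ in $0\to\widehat{Fl}_{H}\oplus\widehat{Fl}_{K}\to M\to I_{G}\to0$ satisfies $\left[M\right]^{fl}=0$ is a reasonable reformulation, but you never establish that vanishing: the final paragraph replaces the proof by the hope that pulling back along $\mathrm{Ind}_{H}^{G}I_{H}\oplus\mathrm{Ind}_{K}^{G}I_{K}\twoheadrightarrow I_{G}$ ``should'' make the non-quasi-permutation parts cancel, and the assertion that $I_{H}\otimes I_{K}$ ``contributes nothing'' is not justified (its restriction to $H$ is a sum of copies of $I_{H}$, so it is not obviously negligible). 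This is exactly the point where all the work lies, and where the paper does something concrete and quite different: it introduces the auxiliary field $\KK=\LL(\alpha,\beta)_{\alpha}(I_{G/K})_{\beta}(I_{G/H})_{\alpha\beta}(I_{G})$ and computes $\KK^{G}$ in two ways. In one ordering, $\alpha\beta$ splits via $\lambda_{h\tau}=z_{h}w_{\tau}$, which shows $\KK^{G}$ is rational over $\LL^{G}$ after peeling off the layers by \lemref{splitting_field_is_rational-1} and \lemref{generic_2_cocycle-1}. In the other ordering, coprimality is used through the explicit elements $\tilde{z}_{h}=z_{h}\cdot\bigl(\prod_{\tau\in K}\tau(y_{h})\bigr)^{-b}$ together with the fact that $\alpha^{a\left|H\right|}$ is a coboundary (as $H^{2}$ of a finite group is torsion), to show that $\alpha$ splits over $\LL(\alpha,\beta)_{\alpha\beta}(I_{G})^{K}$; this lets one discard $I_{G/K}$ (and symmetrically $I_{G/H}$) and conclude $\KK^{G}\approx\LL(\alpha,\beta)_{\alpha\beta}(I_{G})^{G}$. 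Your proposal contains no analogue of this splitting construction, which is the actual content of the lemma; as written, part (2) is a plan rather than a proof.
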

\begin{proof}

\begin{enumerate}
\item Since $H^{1}(K,\LL^{\times})=0$ by Hilbert 90, we have the inflation
restriction exact sequence
\[
\xymatrix{0\ar[r] & H^{2}(\nicefrac{G}{K},\left(\LL^{\times}\right)^{K})\ar[r]^{inf} & H^{2}(G,\LL^{\times})\ar[r]^{res} & H^{2}(K,\LL^{\times})^{G/K}}
.
\]
Since $\left|H\right|=\left|\nicefrac{G}{K}\right|$ and $\left|K\right|$
are coprime, we can find $a,b\in\ZZ$ such that $a\left|\nicefrac{G}{K}\right|+b\left|K\right|=1$.
If $\left[\gamma\right]\in H^{2}(G,\LL^{\times})$ is any 2-cocycle,
then we can write $\gamma=\gamma_{H}\cdot\gamma_{K}$ where $\gamma_{H}=\gamma^{b\left|K\right|}$
and $\gamma_{K}=\gamma^{a\left|H\right|}$. The group $H^{2}(K,\LL^{\times})$
is $\left|K\right|$-torsion, so $res(\gamma_{H})=0$, and therefore
$\gamma_{H}\in inf(Z^{2}(\nicefrac{G}{K},\left(\LL^{\times}\right)^{K})$.
Switching the roles of $H$ and $K$ we get that $\gamma_{K}\in inf(Z^{2}(\nicefrac{G}{H},\left(\LL^{\times}\right)^{H})$.\\
Letting $y_{g}$ be the indeterminates corresponding to $I_{G}$ in
$\LL(\alpha,\beta)_{\alpha\beta\gamma}(I_{G})$, we have the action
\begin{eqnarray*}
h_{1}\tau_{1}(y_{h_{2}\tau_{2}}) & = & \frac{y_{h_{1}h_{2}\tau_{1}\tau_{2}}}{y_{h_{1}\tau_{1}}}\alpha(h_{1}\tau_{1},h_{2}\tau_{2})\beta(h_{1}\tau_{1},h_{2}\tau_{2})\gamma(h_{1}\tau_{1},h_{2}\tau_{2})\\
 & = & \frac{y_{h_{1}h_{2}\tau_{1}\tau_{2}}}{y_{h_{1}\tau_{1}}}\alpha(h_{1},h_{2})\beta(\tau_{1},\tau_{2})\gamma_{H}(h_{1},h_{2})\gamma_{K}(\tau_{1},\tau_{2})
\end{eqnarray*}
for all $h_{1},h_{2}\in H$ and $\tau_{1},\tau_{2}\in K$. As in the
previous lemma, defining $\tilde{\alpha}(g_{1},g_{2})=\alpha(g_{1},g_{2})\gamma_{H}(g_{1},g_{2})$
and $\tilde{\beta}(g_{1},g_{2})=\beta(g_{1},g_{2})\gamma_{K}(g_{1},g_{2})$
for $g_{1},g_{2}\in G$, we get that $\tilde{\alpha}$ and $\tilde{\beta}$
are inflation of generic 2-cocycles in $H,K$ respectively, which
show that $ $$\LL(\alpha,\beta)_{\alpha\beta\gamma}(I_{G})\cong\LL(\alpha,\beta)_{\alpha\beta}(I_{G})$.
\item Consider the field $\KK=\LL(\alpha,\beta)_{\alpha}(I_{G/K})_{\beta}(I_{G/H})_{\alpha\beta}(I_{G})$,
where the indeterminates of $I_{G/K},I_{G/H}$ and $I_{G}$ are denoted
by $z_{h},w_{\tau}$ and $y_{g}$ respectively.\\
The 2-cocycle $\alpha\cdot\beta$ splits in $\LL(\alpha,\beta)_{\alpha}(I_{G/K})_{\beta}(I_{G/H})$.
Indeed, setting $\lambda_{h\tau}=z_{h}w_{\tau}$ for $h\in H$ and
$\tau\in K$ we get that $\alpha\beta(g_{1},g_{2})=\frac{\lambda_{g_{1}}g_{1}(\lambda_{g_{2}})}{\lambda_{g_{1}g_{2}}}$,
where we use the fact that the $z_{h}$ are $K$-trivial and the $w_{\tau}$
are $H$-trivial. We conclude that $\KK\cong\LL(\alpha,\beta)_{\alpha}(I_{G/K})_{\beta}(I_{G/H})_{1}(I_{G})$,
so $\KK^{G}$ is rational over $\LL(\alpha,\beta)_{\alpha}(I_{G/K})_{\beta}(I_{G/H})^{G}$.
Taking the invariants, first under $K$ and then under $H$, we see
that the last field is rational over $\LL^{G}$.\\
On the other hand, we have $\KK=\LL(\alpha,\beta)_{\alpha\beta}(I_{G})_{\beta}(I_{G/H})_{\alpha}(I_{G/K})$,
and we wish to show that $\alpha$ splits in $\LL(\alpha,\beta)_{\alpha\beta}(I_{G})^{K}$
so we can drop the last term $I_{G/K}$. Let $a,b\in\ZZ$ such that
$\left|H\right|a+\left|K\right|b=1$, and set $\tilde{z}_{h}=z_{h}\cdot\left({\displaystyle \prod_{\tau\in K}}\tau(y_{h})\right)^{-b}$
for each $h\in H$ and note that $\tilde{z}_{h}$ is $K$ invariant.
For $h,h'\in H$ we get that 
\begin{eqnarray*}
h'\left(\tilde{z}_{h}\right) & = & \left[\frac{z_{h'h}}{z_{h'}}\alpha(h',h)\right]/\left[\prod_{\tau}\tau\left(\frac{y_{h'h}}{y_{h'}}\alpha(h',h)\beta(h',h)\right)^{b}\right]\\
 & = & \left[\frac{z_{h'h}}{z_{h'}}\alpha(h',h)\right]/\left[\prod_{\tau}\tau\left(\frac{y_{h'h}}{y_{h'}}\right)^{a}\left(\alpha(h',h)\right)^{b\left|K\right|}\right]=\frac{\tilde{z}_{h'h}}{\tilde{z}_{h'}}\frac{\alpha(h',h)}{\alpha(h',h)^{b\left|K\right|}}\\
 & = & \frac{\tilde{z}_{h'h}}{\tilde{z}_{h'}}\alpha(h',h)^{a\left|H\right|}.
\end{eqnarray*}
Since $\alpha$ is an inflation of an $H$ 2-cocycle, we get that
$\alpha^{a\left|H\right|}\sim1$ where both cocycles are in $\LL(\alpha,\beta)^{K}$.
Thus, we can find $\lambda_{h}\in\LL(\alpha,\beta)^{K}$ such that
$\alpha(h_{1},h_{2})=\frac{\lambda_{h_{1}}h_{1}(\lambda_{h_{2}})}{\lambda_{h_{1}h_{2}}}$
for all $h_{1},h_{2}\in H$. Taking $\hat{z}_{h}=\tilde{z}_{h}\cdot\lambda_{h}$
we get that $(1)$ $\hat{z}_{h}$ are $K$ invariant and $(2)$ $H$
acts on them by $h'(\hat{z}_{h})=\frac{\hat{z}_{h'h}}{\hat{z}_{h'}}$,
or in other words $\KK\cong\LL(\alpha,\beta)_{\alpha\beta}(I_{G})_{\beta}(I_{G/H})_{1}(I_{G/K})$.
By \lemref{splitting_field_is_rational-1}, we know that $\KK^{H}$
is stably isomorphic to $\KK=\LL(\alpha,\beta)_{\alpha\beta}(I_{G})_{\beta}(I_{G/H})^{H}$,
and actually it can be shown that this stably isomorphism uses $K$
invariant indeterminates, so we actually have that $\KK^{G}$ is stably
isomorphic to $\LL(\alpha,\beta)_{\alpha\beta}(I_{G})_{\beta}(I_{G/H})^{G}$.\\
Repeating this process for $K$, we get that $\KK^{G}$ is stably
isomorphic to $\LL(\alpha,\beta)_{\alpha\beta}(I_{G})^{G}$ and we
are done.
\end{enumerate}
\end{proof}
We now have all the ingredients for the proof of \thmref{coprime_invariants}.
\begin{proof}
(of \thmref{coprime_invariants}) Let $c$ be a generic $G$ 2-cocycle
where $G=H\times K$ with $\left(\left|H\right|,\left|K\right|\right)=1$.
Let $\alpha,\beta$ be the inflations of the $H$ and $K$ generic
2-cocycles. Consider the field $\KK=\FF(\alpha,\beta,c)_{\alpha\beta c}(I_{G})^{G}$.
From part (1) in \lemref{generic_2_cocycle-1} we get that 
\[
\FF(\alpha,\beta,c)_{\alpha\beta c}(I_{G})=\left(\FF(\alpha,\beta)\right)(c)_{\alpha\beta c}(I_{G})\cong\left(\FF(\alpha,\beta)\right)(c)_{c}(I_{G})
\]
and from part (2) we get that $\left(\FF(\alpha,\beta)\right)(c)_{c}(I_{G})^{G}$
is rational over $\FF(\alpha,\beta)^{G}$. \lemref{product_generic_2_cocycle-1}
shows similarly that $\FF(\alpha,\beta,c)_{\alpha\beta c}(I_{G})^{G}$
is stably isomorphic to $\FF(c)^{G}$.

Finally, we have
\begin{eqnarray*}
\left|G\right|=\left[\FF(\alpha,\beta):\FF(\alpha,\beta)^{G}\right] & \leq & \left[\FF(\alpha,\beta):\FF(\alpha)^{H}\FF(\beta)^{K}\right]=\left[\FF(\alpha,\beta):\FF(\alpha)\FF(\beta)^{K}\right]\left[\FF(\alpha)\FF(\beta)^{K}:\FF(\alpha)^{H}\FF(\beta)^{K}\right]\\
 & \leq & \left[\FF(\beta):\FF(\beta)^{K}\right]\left[\FF(\alpha):\FF(\alpha)^{H}\right]=\left|K\right|\left|H\right|=\left|G\right|,
\end{eqnarray*}
so there are equalities everywhere. In particular $\FF(\alpha,\beta)^{G}=\FF(\alpha)^{H}\cdot\FF(\beta)^{K}$,
which is isomorphic to the fraction field of $\FF(\alpha)^{H}\otimes_{\FF}\FF(\beta)^{K}$
(since $\FF(\alpha)\cap\FF(\beta)=\FF$). It follows that $\FF(c)^{G}$
is stably isomorphic to the fraction field of $\FF(\alpha)^{H}\otimes_{\FF}\FF(\beta)^{K}$.
\end{proof}

\section{\label{sec:The-ungraded-center}The center of the generic division
algebra}

The center of the generic division algebra of degree $n$ can also
be represented using flows in graphs. More precisely, let $V$ be
a set of vertices of cardinality $n$ and let $E_{-}=\left\{ \left(v,u\right)\;\mid\; v,u\in V\;\mbox{distinct}\right\} $.
The graph $X=(V,E_{-})$ has a natural $S_{n}$-action, and the center
of the generic division algebra of degree $n$ is $\FF(Fl(X)\oplus\ZZ\nicefrac{S_{n}}{S_{n-1}})^{S_{n}}$
(up to stable isomorphism). Actually the part $\ZZ\nicefrac{S_{n}}{S_{n-1}}$
can be thought of as adding the self loops to $E_{-}$.

We finish this paper by recalling some of the results regarding the
extension $\FF(Fl(X)\oplus\ZZ\nicefrac{S_{n}}{S_{n-1}})^{S_{n}}/\FF$
and reinterpret them using the language of flows.\\

For $n=2$ the group $S_{n}=C_{2}$ is just the cyclic group of order
$2$, so that $Fl(X)\cong\ZZ$ because $X$ is just a cycle of order
2 (see also \corref{flows_for_cyclic_groups}). The required field
is thus $\FF(\ZZ\oplus\ZZ C_{2})^{C_{2}}\cong\FF(\ZZ C_{2})^{C_{2}}(x)$
which is rational over $\FF$ since $\FF(\ZZ C_{2})^{C_{2}}/\FF$
is rational.

For $n=3$ the group $S_{3}=D_{6}=\left\langle \sigma,\tau\;\mid\;\sigma^{3}=\tau^{2}=1,\;\tau\sigma\tau^{-1}=\sigma^{-1}\right\rangle $
and $V=\nicefrac{G}{\left\langle \tau\right\rangle }$ as a $G$-set.
Since all the Sylow subgroups of $S_{3}$ are cyclic, \thmref{Main_Theorem_sylow}
and \corref{generalized_diehedral} show that $\FF(X)^{S_{3}}/\FF$
is stably rational (though we assume that $\FF$ contains a primitive
root of unity of order 3).\\

In case $n=p$ is prime, Saltman showed that the extension $\FF(Fl(X))^{S_{n}}/\FF$
is retract rational \cite{saltman_retract_1984}. Later on it was
proved that $Fl(X)$ is actually invertible (see Theorem 9.12 in \cite{colliot-thelene_principal_1987}
and section 3.1 in \cite{bessenrodt_stable_1991}). The retract rationality
now follows since $\FF(Fl(X)\oplus\ZZ\nicefrac{S_{n}}{S_{n-1}})^{S_{n}}/\FF(\ZZ\nicefrac{S_{n}}{S_{n-1}})^{S_{n}}$
is a retract rational extension (\cite{saltman_retract_1984}, Theorem
3.14) and $\FF(\ZZ\nicefrac{S_{n}}{S_{n-1}})^{S_{n}}/\FF$ is rational
by the fundamental theorem of symmetric polynomials.

To prove that $Fl(X)$ is invertible, recall that a $G$-lattice $M$
is invertible if and only if $M_{H_{i}}$, its restriction to the
$H_{i}$-action, is invertible as an $H_{i}$-lattice for subgroups
$H_{i}$ where $gcd\left(\left[G:H_{1}\right],...,\left[G:H_{k}\right]\right)=1$
(see the argument leading to \corref{invertible_by_subgroup}). For
$G=S_{n}$ choose $H_{1}=\left\langle \left(1,2,3,...,n\right)\right\rangle \cong C_{n}$
and $H_{2}=stab_{S_{n}}(\left\{ n\right\} )\cong S_{n-1}$, hence
for $n=p$ prime we get that $\left[G:H_{1}\right]=(p-1)!$ and $\left[G:H_{2}\right]=p$
are coprime. The $H_{1}$-lattice $Fl(X)_{H_{1}}$ is just $Fl(C_{n},C_{n})$
which is a permutation lattice by \corref{flows_for_cyclic_groups},
since $H_{1}$ is cyclic. On the other hand, the $H_{2}$-lattice
$Fl(X)_{H_{2}}$ is again a flow in graph where there is a fixed vertex.
We already showed in \corref{fixed_vertex} that in such cases there
is a reduction to Noether's problem $\FF(\ZZ S_{n})^{S_{n}}/\FF$
which is known to be stably rational . A further investigation shows
that $Fl(X)_{H_{2}}$ is already in itself a permutation lattice.
Indeed, let $T$ be the subtree of $X$ consisting of the edges $\left\{ (i,n)\;\mid\;1\leq i\leq n-1\right\} $.
For each edge $e$ not in $T$ there is a unique way to complete it
to a simple flow in the graph using the edges of $T$, which we denote
by $C_{e}$. It is easily seen that $\left\{ C_{e}\;\mid\; e\notin T\right\} $
is a basis for $Fl(X)$ which is $H_{2}$-stable, so that $Fl(X)$
is an $H_{2}$-permutation lattice.\\

Finally, in \cite{beneish_induction_1998} Beneish proved the following.
For $p$ prime, let $H\leq S_{p}$ be the subgroup generated by the
cycle $(1,2,3,...,p)$ and let $N$ be its normalizer. Then $N$ contains
a cyclic group $C$ of order $p-1$ such that $N\cong H\rtimes C$
is isomorphic to the affine group of the finite field with $p$ elements.
Beneish showed that under these notations the fields $\CC(Fl(X))$
is stably isomorphic to $\CC(\ZZ S_{p}\otimes_{N}Fl(X))$ as $S_{n}$-fields
(and therefore also as $N$-fields). Considering $Fl(X)$ as a module
over $\ZZ N$, we get the set of flows on the graph with vertices
$\nicefrac{N}{C}$, which is exactly the type of graphs appearing
in \thmref{faithful_graphs} when we give a reduction for group with
cyclic $p$-Sylow subgroups. Thus, a better understanding of these
lattices over the affine group might lead to new results on the center
of the standard generic division algebra.

\bibliographystyle{my-style}
\bibliography{generic}

\end{document}